\pdfoutput=1
\documentclass[oneside,a4paper,12pt,reqno]{amsart}
\usepackage{typearea}
\typearea{13}

\usepackage[utf8]{inputenc}
\usepackage[UKenglish]{babel}
\usepackage[T1]{fontenc}

\usepackage{amsmath}
\usepackage{amsthm}
\usepackage{amsfonts}
\usepackage{amssymb}
\usepackage{enumitem}

\usepackage{dsfont}
\usepackage{ifthen}
\usepackage{mathtools}
\usepackage{mathrsfs}
\usepackage{etoolbox}
\usepackage{xparse}

\usepackage{microtype}
\usepackage{url}
\usepackage[plainpages=false,colorlinks=false,unicode]{hyperref}

\mathtoolsset{centercolon}
\numberwithin{equation}{section}

\theoremstyle{plain}
\newtheorem{thm}{Theorem}[section]
\newtheorem{prop}[thm]{Proposition}
\newtheorem{cor}[thm]{Corollary}
\newtheorem{lem}[thm]{Lemma}
\theoremstyle{definition}
\newtheorem{example}[thm]{Example}
\newtheorem{rem}[thm]{Remark}
\newtheorem*{rem*}{Remark}
\newtheorem{definition}[thm]{Definition}
\newtheorem*{conclusion*}{Conclusion}

\newcommand{\mathdcl}[1]{{\ifstrequal{#1}{l}{l}{\textup{#1}}}}
\newcommand{\loc}{\mathdcl{loc}}

\newcommand{\RR}{\mathbb{R}}
\newcommand{\NN}{\mathbb{N}}
\newcommand{\CC}{\mathbb{C}}
\newcommand{\ZZ}{\mathbb{Z}}
\newcommand{\KK}{\mathbb{K}}

\newcommand{\eps}{\varepsilon}
\renewcommand{\phi}{\varphi}

\newcommand{\Linop}{\mathcal{L}}
\newcommand{\conj}[1]{\overline{#1}}
\newcommand{\clos}[1]{\overline{#1}}
\newcommand{\form}[1]{{\mathfrak{#1}}}
\newcommand{\restrict}[2]{\ensuremath{#1\raisebox{-0.4ex}{$|$}\strut_{#2}}}

\renewcommand{\Re}{\operatorname{Re}}

\newcommand{\dx}[1][x]{\,\mathrm{d}#1}

\DeclarePairedDelimiter\norm{\lVert}{\rVert}
\DeclarePairedDelimiter\abs{\lvert}{\rvert}

\makeatletter
\let\old@norm\norm
\@ifpackageloaded{xparse}{
\DeclareDocumentCommand{\smartnorm}{soG{\cdot}}{%
  \IfBooleanTF{#1}{\old@norm{#3}}%
  {%
    \IfValueTF{#2}{\old@norm[#2]{#3}}{\old@norm*{#3}}%
  }%
}
\def\norm{\smartnorm}
}{}
\makeatother

\newcommand{\scalar}[3][auto]{{%
\ifthenelse{\equal{#1}{auto}}{\left(#2\mkern3mu{\mid}\mkern3mu #3\right)}{}
\ifthenelse{\equal{#1}{b}}{\bigl(#2\mkern3mu{\mid}\mkern3mu #3\bigr)}{}
\ifthenelse{\equal{#1}{B}}{\Bigl(#2\mkern3mu{\bigm|}\mkern3mu #3\Bigr)}{}
}}

\newcommand{\emphdef}[1]{\textbf{\boldmath #1\unboldmath}}

\newlist{romanenum}{enumerate}{1}
\setlist[romanenum]{label=\textup{(\roman*)},ref=\textup{(\roman*)},itemsep=0em,topsep=1ex}

\newlist{alenum}{enumerate}{1}
\setlist[alenum]{label=\textup{(\alph*)},ref=\textup{(\alph*)},itemsep=0em,topsep=1ex}

\newlist{parenum}{enumerate}{1}
\setlist[parenum]{label=\textup{\arabic*.},ref=\textup{\arabic*.},wide,noitemsep}

\DeclareMathOperator{\tr}{tr}

\newcommand{\cA}{\mathcal{A}}
\newcommand{\cC}{\mathcal{C}}
\newcommand{\cH}{\mathcal{H}}
\newcommand{\cV}{\mathcal{V}}
\newcommand{\scrS}{\mathscr{S}}

\renewcommand{\MR}{\mathdcl{MR}}
\newcommand{\MRa}{\MR_{\form{a}}}
\renewcommand{\Linop}{\mathscr{L}}
\newcommand{\cMR}{\mathcal{M}\mathcal{R}}

\DeclareDocumentCommand{\PPhi}{o}{%
\mathcal{P}_\Phi\IfValueTF{#1}{(#1)}{}
}
\DeclareDocumentCommand{\Pini}{o}{%
\mathcal{P}\IfValueTF{#1}{(#1)}{}
}

\newcommand{\dembed}{\overset{\mathdcl{d}}{\hookrightarrow}}
\newcommand{\embed}{\hookrightarrow}

\hfuzz=4pt

\makeatletter
\@namedef{subjclassname@2020}{%
  \textup{2020} Mathematics Subject Classification}
\makeatother

\title{Maximal regularity for generalized boundary conditions in time}

\author[W. Arendt]{\sc Wolfgang Arendt}
\address{Wolfgang Arendt\\Institute of Applied Analysis\\Ulm University\\89069 Ulm\\Germany}
\email{wolfgang.arendt@uni-ulm.de}
\author[M. Sauter]{\sc Manfred Sauter} 
\address{Manfred Sauter\\Faculty of Mathematics and Economics\\Ulm University\\89069 Ulm\\Germany}
\email{manfred.sauter@uni-ulm.de}

\date{6th January 2025} 
\dedicatory{Dedicated to Giuseppe Da Prato}

\keywords{Parabolic evolution equations, non-standard boundary conditions in time, non-autonomous equations, maximal regularity, Lions' problem, non-autonomous forms}
\subjclass[2020]{Primary: 35K90; Secondary: 35B65, 35B30, 47A07, 34G10}

\begin{document}
\begin{abstract}
We consider autonomous and non-autonomous evolution equations on a time interval $[0,\tau]$ in a Banach space $X$ with the non-standard time--boundary condition $u(0)=\Phi u(\tau)$, where $\Phi$ is a linear map on $X$. If $\Phi=0$, this is an initial value problem, whereas $\Phi=I$ corresponds to periodic boundary conditions, and $\Phi=-I$ to antiperiodic boundary conditions.
Our main point is to establish maximal $L^p$-regularity. 
In the non-autonomous case we consider two situations. The first concerns time-dependent operators with a fixed domain.
In the second one we take $X=H$ a Hilbert space and consider evolution equations associated with non-autonomous forms. Of special interest is then maximal regularity in $H$ with a non-standard time--boundary condition.
\end{abstract}

\begingroup
\renewcommand{\MakeUppercase}[1]{#1}
\maketitle
\endgroup

\section{Introduction}

The aim of this paper is to study evolution equations with generalized boundary conditions for the time interval $[0,\tau]$, namely $u(0)=\Phi u(\tau)$,
where $\Phi$ is a linear map on the underlying Banach space.
Classical cases are the initial value problem $u(0)=0$ corresponding to $\Phi=0$, periodic boundary conditions corresponding to $\Phi=I$,
or antiperiodic boundary conditions if $\Phi=-I$.
Our main point is to establish maximal $L^p$-regularity.

The paper is dedicated to the memory of Professor Giuseppe Da Prato, a master and leader in the theory of evolution equations.
The first-named author had the privilege to have him as a charismatic teacher as a third year student at the University of Nice in 1971--1973,
while the famous paper of Da Prato and Grisvard~\cite{DPG75} was written.
This paper, considering the maximal regularity property as the closedness of the sum of two operators, was fundamentally important in the development of the theory of evolution equations.
It established maximal regularity on interpolation spaces and its influence eventually lead to the definite results by Dore and Venni~\cite{DV87} and the characterization of maximal $L^p$-regularity by Weis~\cite{Wei01}, which were the basis of the \textit{max-reg movement} in PDE.

In the paper we start with an operator $A$ enjoying maximal $L^p$-regularity. In Section~\ref{sec:2} we consider the case where $-A$ generates a $C_0$-semigroup, and in Section~\ref{sec:3a} we consider the non-autonomous setting with operators $A(\cdot)$ that share the same domain.
We find the properties of $\Phi$ which lead to weak well-posedness and then maximal $L^p$-regularity for the problem with the generalized boundary condition in time.

The second part of the paper is devoted to non-autonomous evolution equations governed by non-autonomous forms.
This is a purely Hilbert space theory adapted to a Gelfand triple $V\embed H\embed V'$. 
In Section~\ref{sec:3} we introduce the form method first in the autonomous case. Then in Section~\ref{sec:4} we characterize well-posedness in $V'$, a rather weak form, and in Section~\ref{sec:6} well-posedness in $H$ under more restrictive conditions, all for the generalized time--boundary condition. Finally, in Section~\ref{sec:7} we provide an example where one does not have well-posedness in $H$.

To the best of our knowledge such generalized boundary conditions appeared first in the monograph by Showalter~\cite[p.\,112]{Sho97} for selfadjoint problems.
Their origins can be traced back to regularization schemes for certain ill-posed evolution problems with pure initial or final time boundary conditions, cf.~\cite{Sho85,CO94,ACEO98}. The ill-posedness there is due to the lack of continuous dependence on the boundary value.
For example, instead of considering the backward heat equation subject to the final time condition $u(\tau)=y_0$, the equation is considered with a mixed initial and final time boundary condition $\eps u(0)+u(\tau)=y_0$ for small $\eps>0$. (For $y_0=0$ this corresponds to $\Phi x = -\frac{1}{\eps}x$ in our setting.) This regularizes the problem and yields a numerically well-behaved approximation scheme for $\eps\to0+$ for the ill-posed problem.

Equations where the time--boundary condition uses values at different times are also called \textit{nonstandard problems} in the literature. Applications for the considered parabolic, hyperbolic and mixed-type equations include image recovery, elastodynamic models and control theory, see e.g.~\cite{APS04,Qui05,APS05,KP05}.

Throughout this paper we suppose that $\tau>0$ is fixed.

\smallskip
\textbf{Acknowledgment.} We would like to thank Professor Showalter for providing us with information on the origins of these boundary conditions.

\section{Generalized boundary conditions for semigroups}\label{sec:2}

Let $T$ be a $C_0$-semigroup on a real or complex Banach space $X$ with generator $-A$.
We will keep this setting throughout this section.

Given $f\in L^1(0,\tau;X)$ we consider the equation
\begin{equation}\label{eq:inhom-cauchy}
    \dot{u}+Au = f.
\end{equation}
We say that $u$ is a \emphdef{weak solution} of~\eqref{eq:inhom-cauchy} if $u\in C([0,\tau];X)$ and
for all $x'\in D(A')$ we have $\langle x',u\rangle\in W^{1,1}(0,\tau)$ and
\begin{equation}\label{eq:weak-sol}
    \frac{d}{dt}\langle x',u(t)\rangle + \langle A'x', u(t)\rangle = \langle x',f(t)\rangle\quad\text{for a.e.~$t\in(0,\tau)$.}
\end{equation}
We define $T*f\in C([0,\tau];X)$ by
\[
    (T*f)(t) = \int_0^t T(t-s)f(s)\dx[s].
\]

\begin{prop}\label{prop:bal77}\leavevmode
\begin{alenum}
\item\label{en:a-ws}
Let $u\colon[0,\tau]\to X$ be a function. Then $u$ is a weak solution of~\eqref{eq:inhom-cauchy} if and only if there exists an $x\in X$ such that
\[
    u(t) = T(t)x + (T*f)(t)\qquad \text{for all $t\in[0,\tau]$.}
\]
\item
As a consequence, for each $x\in X$ there exists a unique weak solution $u$ of~\eqref{eq:inhom-cauchy} satisfying $u(0)=x$.

\item\label{en:c-ws}
A function $u\in C([0,\tau];X)$ is a weak solution of~\eqref{eq:inhom-cauchy} if and only if $\int_0^t u(s)\dx[s]\in D(A)$ and
\[
    u(t)-u(0) + A\int_0^t u(s)\dx[s] = \int_0^t f(s)\dx[s]
\]
for all $t\in[0,\tau]$.
\end{alenum}
\end{prop}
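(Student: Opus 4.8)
The plan is to route all three assertions through the integrated identity in part~\ref{en:c-ws}, which I treat as the central object. First I would record the two variation-of-constants identities furnished by the semigroup: for every $x\in X$ one has $\int_0^t T(s)x\dx[s]\in D(A)$ with $A\int_0^t T(s)x\dx[s]=x-T(t)x$ (the standard generator identity, obtained by applying $\frac{T(h)-I}{h}$ to the integral and passing to the limit), and, by Fubini together with the closedness of $A$, the convolution analogue $A\int_0^t (T*f)(s)\dx[s]=\int_0^t f(s)\dx[s]-(T*f)(t)$. Adding these shows that $u=T(\cdot)x+T*f$ satisfies $\int_0^t u\in D(A)$ and the identity in~\ref{en:c-ws}, with $u(0)=x$; this already yields the existence half of part~(b) and, once~\ref{en:c-ws} is in place, the ``if'' direction of~\ref{en:a-ws}.

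Next I would prove the equivalence in~\ref{en:c-ws} in isolation. Testing the integrated identity against $x'\in D(A')$ and using $\langle x',A\int_0^t u(s)\dx[s]\rangle=\int_0^t\langle A'x',u(s)\rangle\dx[s]$ exhibits $t\mapsto\langle x',u(t)\rangle$ as absolutely continuous, and differentiating reproduces~\eqref{eq:weak-sol}; this is the easy direction. The converse is the technical heart: starting from a weak solution I integrate~\eqref{eq:weak-sol} (legitimate since $\langle x',u\rangle\in W^{1,1}$) to obtain $\langle A'x',\int_0^t u(s)\dx[s]\rangle=\langle x',z(t)\rangle$ for all $x'\in D(A')$, where $z(t):=\int_0^t f(s)\dx[s]-u(t)+u(0)$, and I must upgrade this \emph{weak} relation to genuine membership $\int_0^t u\in D(A)$ with $A\int_0^t u=z(t)$. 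I expect this to be the main obstacle, since in a nonreflexive $X$ one cannot simply invoke $A=(A')'$. The clean device is the resolvent: fixing $\lambda$ in $\rho(-A)$, every $x'\in X'$ has the form $x'=(\lambda+A')\xi'$ with $\xi'=(\lambda+A')^{-1}x'\in D(A')$, and substituting into $\langle x',(\lambda+A)^{-1}(\lambda y+z)\rangle$ with $y=\int_0^t u$ collapses, via the weak relation, to $\langle x',y\rangle$; as $x'$ is arbitrary this forces $y=(\lambda+A)^{-1}(\lambda y+z)$, hence $y\in D(A)$ and $Ay=z$.

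Finally I would settle uniqueness and assemble the remaining claims. Given a weak solution with $u(0)=0$ and $f=0$, part~\ref{en:c-ws} gives $w(t):=\int_0^t u(s)\dx[s]\in D(A)$ with $w\in C^1$ and $w'=u=-Aw$, so differentiating $s\mapsto T(t-s)w(s)$ shows it is constant in $s$, whence $w\equiv0$ and $u\equiv0$. For the ``only if'' direction of~\ref{en:a-ws} I apply this to the difference $u-(T(\cdot)u(0)+T*f)$, which by the first two paragraphs is a weak solution with zero data; linearity then gives $u=T(\cdot)u(0)+T*f$. Combining existence from the first paragraph with this uniqueness yields part~(b). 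The only steps requiring genuine care are the Fubini/closedness juggling in the convolution identity and the resolvent duality argument; the rest is routine bookkeeping with absolutely continuous scalar functions.
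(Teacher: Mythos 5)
Your proof is correct, and it is worth noting that the paper itself offers no argument at all: part~\ref{en:a-ws} is disposed of by citing Ball's 1977 theorem and part~\ref{en:c-ws} by citing \cite[Proposition~3.1.16]{ABHN11}, so your proposal in effect reconstructs the proofs that the paper outsources to the literature. Your organization differs slightly from the cited sources in a pleasant way: rather than treating Ball's characterization~\ref{en:a-ws} and the integrated (``mild solution'') characterization~\ref{en:c-ws} as two separate results, you make the integrated identity the single pivot, verify that $T(\cdot)x+T*f$ satisfies it via the two generator identities $A\int_0^t T(s)x\dx[s]=x-T(t)x$ and (by Fubini and Hille's theorem for the closed operator $A$) $A\int_0^t (T*f)(s)\dx[s]=\int_0^t f(s)\dx[s]-(T*f)(t)$, and then derive \ref{en:a-ws} and~(b) from \ref{en:c-ws} plus uniqueness. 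The two delicate points are both handled correctly. First, the resolvent-duality step: since $D(A')$ need not be norm-dense when $X$ is non-reflexive, one indeed cannot argue via $A=(A')'$, and your device of writing an arbitrary $x'\in X'$ as $(\lambda+A')\xi'$ with $\xi'=(\lambda+A')^{-1}x'$, using $\bigl((\lambda+A)^{-1}\bigr)'=(\lambda+A')^{-1}$ for $\lambda\in\rho(-A)$ (nonempty because $-A$ generates a $C_0$-semigroup), correctly forces $\int_0^t u(s)\dx[s]=(\lambda+A)^{-1}\bigl(\lambda\int_0^t u(s)\dx[s]+z(t)\bigr)\in D(A)$ with $A\int_0^t u(s)\dx[s]=z(t)$; this is essentially the mechanism inside Ball's original proof. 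Second, the uniqueness argument via constancy of $s\mapsto T(t-s)w(s)$ with $w(t)=\int_0^t u(s)\dx[s]$ is legitimate because \ref{en:c-ws} gives $w(s)\in D(A)$ with $Aw=-u$ continuous and $w\in C^1$, so the differentiation lemma applies with generator $-A$. What your route buys over the paper's citations is a self-contained proof that makes visible exactly where density of $D(A)$, closedness of $A$, and nonemptiness of the resolvent set enter; what the paper's route buys is brevity, since both ingredients are textbook results.
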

\begin{proof}
Part~\ref{en:a-ws} is due to~\cite{Bal77}. Part~\ref{en:c-ws} follows from~\cite[Proposition~3.1.16]{ABHN11}.
\end{proof}

If $f\in C([0,\tau];X)$ and $u$ is a weak solution of~\eqref{eq:inhom-cauchy}, then it follows that for $x'\in D(A')$ one has $\langle x',u\rangle\in C^1[0,\tau]$ and~\eqref{eq:weak-sol} holds for all $t\in[0,\tau]$.

Our aim is to consider more general boundary conditions for the time interval other than $u(0)=0$. To that end, let $\Phi\colon X\to X$ be linear.
For $\Phi=I$, the first
equivalence of the following Theorem~\ref{thm:2.2} is proved by Prüss~\cite{Pru93}.

\begin{thm}\label{thm:2.2}
The following statements are equivalent.
\begin{romanenum}
\item\label{en:2.2i} For all $f\in C([0,\tau];X)$ there exists a unique weak solution $u$ of~\eqref{eq:inhom-cauchy} satisfying $u(0)=\Phi u(\tau)$.
\item $I-\Phi T(\tau)\colon X\to X$ is bijective.
\item For all $f\in L^1(0,\tau;X)$ and for all $y_0\in X$ there exists a unique weak solution $u$ of~\eqref{eq:inhom-cauchy} satisfying $u(0)=\Phi u(\tau)+y_0$.
\end{romanenum}
\end{thm}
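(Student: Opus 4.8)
The plan is to use the variation-of-constants parametrization of weak solutions and thereby reduce all three statements to a single invertibility question on $X$. By Proposition~\ref{prop:bal77}\ref{en:a-ws}, the weak solutions of~\eqref{eq:inhom-cauchy} are exactly the functions $u(t) = T(t)x + (T*f)(t)$ with $x\in X$, and for such $u$ one has $u(0)=x$ and $u(\tau) = T(\tau)x + (T*f)(\tau)$. Hence the boundary condition $u(0)=\Phi u(\tau)+y_0$ is equivalent to the linear equation
\[
    (I-\Phi T(\tau))\,x = \Phi(T*f)(\tau) + y_0
\]
for the initial value $x=u(0)$. Writing $S\coloneq I-\Phi T(\tau)$, I would then establish the cycle of implications (ii)$\Rightarrow$(iii)$\Rightarrow$\ref{en:2.2i}$\Rightarrow$(ii).

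The implication (ii)$\Rightarrow$(iii) is immediate: if $S$ is bijective, then for every $f\in L^1(0,\tau;X)$ (for which $T*f\in C([0,\tau];X)$ is available) and every $y_0\in X$ the displayed equation has the unique solution $x=S^{-1}(\Phi(T*f)(\tau)+y_0)$, which produces exactly one weak solution with the prescribed boundary condition. The implication (iii)$\Rightarrow$\ref{en:2.2i} is the special case $y_0=0$ together with $C([0,\tau];X)\subseteq L^1(0,\tau;X)$, hence trivial.

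The heart of the matter, and the main obstacle, is \ref{en:2.2i}$\Rightarrow$(ii), i.e.\ extracting bijectivity of $S$ from solvability with $y_0=0$. Injectivity I would get from uniqueness: if $Sx_0=0$, then both $u\equiv 0$ and $u(t)=T(t)x_0$ are weak solutions for $f=0$ satisfying $u(0)=\Phi u(\tau)$, so uniqueness forces $x_0=0$. Surjectivity is the delicate point. The tempting route---to show that $f\mapsto(T*f)(\tau)$ maps onto $X$---fails in general, since this range can be a proper (albeit dense) subspace of $X$, so one cannot realize an arbitrary right-hand side directly. The key observation I would use instead is that it suffices to reach the elements $T(\tau)z$: for fixed $z\in X$ the continuous choice $f(s)=\frac{1}{\tau}T(s)z$ gives, by the semigroup law,
\[
    (T*f)(\tau) = \frac{1}{\tau}\int_0^\tau T(\tau-s)T(s)z\dx[s] = \frac{1}{\tau}\int_0^\tau T(\tau)z\dx[s] = T(\tau)z,
\]
so that \ref{en:2.2i} provides a weak solution whose initial value $x$ satisfies $Sx=\Phi T(\tau)z$. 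Thus $\Phi T(\tau)z\in\operatorname{ran} S$ for every $z\in X$. The argument is then closed by the algebraic identity $z = Sz + \Phi T(\tau)z$, in which the first summand lies in $\operatorname{ran} S$ trivially and the second by the preceding step; as $\operatorname{ran} S$ is a subspace, this yields $z\in\operatorname{ran} S$ and hence surjectivity, completing the cycle.
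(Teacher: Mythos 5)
Your proposal is correct and follows essentially the same route as the paper: the same parametrization of weak solutions via Proposition~\ref{prop:bal77}\ref{en:a-ws}, the same injectivity argument from uniqueness, and for surjectivity the identical test function $f(s)=\frac{1}{\tau}T(s)z$ together with the identity $z=(I-\Phi T(\tau))(z+x)$, which is exactly the paper's equation $(I-\Phi T(\tau))(y+x)=x$ in different notation. The only cosmetic difference is the order of the cycle of implications, which is immaterial.
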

\begin{proof}
\begin{parenum}
\item[(i)$\Rightarrow$(ii).] Let $x\in X$. Define $f\in C([0,\tau];X)$ by $f(t)=\frac{1}{\tau}T(t)x$. By assumption there exists $y\in X$ such that
\[
    u(t)= T(t)y + (T * f)(t) = T(t)y + \frac{t}{\tau}T(t)x
\]
satisfies $y=u(0)=\Phi u(\tau) = \Phi T(\tau)y + \Phi T(\tau)x$.
Thus $y-\Phi T(\tau)y = \Phi T(\tau)x$.
Hence $(I-\Phi T(\tau))(y+x)=x$.
We have shown that $I-\Phi T(\tau)$ is surjective.
In order to show injectivity, assume that $x-\Phi T(\tau)x=0$. Then $u(t) := T(t)x$ is a weak solution of~\eqref{eq:inhom-cauchy} for $f=0$ satisfying $u(0)=x=\Phi T(\tau)x$.
Hence $u=0$ by the uniqueness assumption in~\ref{en:2.2i}. This implies $x=0$.

\item[(ii)$\Rightarrow$(iii).]
Let $f\in L^1(0,\tau;X)$, $y_0\in X$. Let $u(t)=T(t)x + (T * f)(t)$ be a weak solution of~\eqref{eq:inhom-cauchy} for $x\in X$. Then $u(0)=\Phi u(\tau)+y_0$ if and only if
\begin{equation}\label{eq:2.4}
    x=\Phi T(\tau)x + \Phi((T * f)(\tau)) + y_0.
\end{equation}
Since $I-\Phi T(\tau)$ is invertible, there exists a unique $x\in X$ such that~\ref{eq:2.4} holds.

\item[(iii)$\Rightarrow$(i).] 
This is trivial.\qedhere
\end{parenum}
\end{proof}

If the equivalent conditions of Theorem~\ref{thm:2.2} are satisfied, we say that the problem
\[
    \PPhi\quad \left\{\begin{aligned}
        &\dot{u} + Au = f, \\
        &u(0) = \Phi u(\tau)
    \end{aligned}\right.
\]
is \emphdef{weakly well-posed}.

In that case, for a bounded $\Phi\in\Linop(X)$, the solution depends continuously on the data $f$. In fact the following holds.
\begin{prop}
Let $\Phi\in\Linop(X)$. Assume that $\PPhi$ is weakly well-posed. Let $f,f_n\in L^1(0,\tau;X)$ be such that $\lim_{n\to\infty} f_n=f$ in $L^1(0,\tau;X)$. Let $u_n$ be the solution of $\PPhi[f_n]$. Then $\lim u_n=u$ in $C([0,\tau];X)$, where $u\in C([0,\tau];X)$ is the solution of $\PPhi[f]$.
\end{prop}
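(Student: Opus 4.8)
The plan is to reduce everything to the explicit solution formula from the proof of Theorem~\ref{thm:2.2} and then to exploit the boundedness of the relevant inverse. Since $\PPhi$ is weakly well-posed, Theorem~\ref{thm:2.2} tells us that $I-\Phi T(\tau)$ is bijective. As $\Phi\in\Linop(X)$ and $T(\tau)\in\Linop(X)$, the operator $I-\Phi T(\tau)$ is bounded, so by the bounded inverse theorem $R:=(I-\Phi T(\tau))^{-1}\in\Linop(X)$. This boundedness of $R$ is really the only structural input; everything else is a chain of straightforward estimates.

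First I would recall the solution formula. By Proposition~\ref{prop:bal77} every weak solution of~\eqref{eq:inhom-cauchy} has the form $u(t)=T(t)x+(T*f)(t)$, and, following the computation in the proof of the implication (ii)$\Rightarrow$(iii) with $y_0=0$, the boundary condition $u(0)=\Phi u(\tau)$ fixes $x$ uniquely as $x=R\,\Phi\bigl((T*f)(\tau)\bigr)$. Writing $x_n=R\,\Phi\bigl((T*f_n)(\tau)\bigr)$ and $x=R\,\Phi\bigl((T*f)(\tau)\bigr)$, we thus have $u_n(t)=T(t)x_n+(T*f_n)(t)$ and $u(t)=T(t)x+(T*f)(t)$.

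Next I would set up the two basic estimates. Since $T$ is a $C_0$-semigroup, $M:=\sup_{t\in[0,\tau]}\norm{T(t)}<\infty$, and directly from the definition of the convolution one gets $\norm{(T*g)(t)}\le M\norm{g}_{L^1(0,\tau;X)}$ for every $g\in L^1(0,\tau;X)$ and every $t\in[0,\tau]$. Applying this to $g=f_n-f$ at $t=\tau$ and using the boundedness of $R$ and $\Phi$ gives $\norm{x_n-x}\le\norm{R}\,\norm{\Phi}\,M\,\norm{f_n-f}_{L^1(0,\tau;X)}$.

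Finally I would combine these. For $t\in[0,\tau]$,
\[
    \norm{u_n(t)-u(t)}\le\norm{T(t)(x_n-x)}+\norm{(T*(f_n-f))(t)}\le M\norm{x_n-x}+M\norm{f_n-f}_{L^1(0,\tau;X)},
\]
and taking the supremum over $t$ yields $\norm{u_n-u}_{C([0,\tau];X)}\le M\bigl(\norm{R}\,\norm{\Phi}\,M+1\bigr)\norm{f_n-f}_{L^1(0,\tau;X)}$, which tends to $0$ by hypothesis. There is no genuine obstacle here: the statement is a continuity assertion for the bounded linear data-to-solution map, and the only point requiring care is invoking the bounded inverse theorem to guarantee that $R$ is bounded, which in turn is where the assumption $\Phi\in\Linop(X)$ enters.
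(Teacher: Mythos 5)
Your proof is correct and follows essentially the same route as the paper: both rest on the representation $u_n(t)=T(t)x_n+(T*f_n)(t)$ with $x_n=(I-\Phi T(\tau))^{-1}\Phi\bigl((T*f_n)(\tau)\bigr)$ and on the boundedness of $(I-\Phi T(\tau))^{-1}$. The only cosmetic difference is that the paper frames the argument via the closed graph of the solution map $S\colon L^1(0,\tau;X)\to C([0,\tau];X)$ (and then in fact proves convergence directly), whereas you skip that framing and make the Lipschitz estimate $\norm{u_n-u}_{C([0,\tau];X)}\le M\bigl(\norm{R}\,\norm{\Phi}\,M+1\bigr)\norm{f_n-f}_{L^1(0,\tau;X)}$ explicit.
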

\begin{proof}
Denote by $S\colon L^1(0,\tau;X)\to C([0,\tau];X)$ the mapping $Sf=u$, where $u$ is the weak solution of $\PPhi[f]$. Then $S$ is linear. We show that $S$ has a closed graph.
Let $f_n\to f$ in $L^1(0,\tau;X)$. Let $u_n\in C([0,\tau];X)$ be the weak solution of $\PPhi[f_n]$.
Then $u_n(t)=T(t)x_n + (T * f_n)(t)$, where $x_n = (I-\Phi T(\tau))^{-1}\Phi((T*f_n)(\tau))$.
Since $T * f_n\to T*f$ in $C([0,\tau];X)$, it follows that $x_n\to x := (I-\Phi T(\tau))^{-1}\Phi((T * f)(\tau))$. Thus $u_n(t) \to u(t) = T(t)x + (T*f)(t)$ as $n\to\infty$ uniformly in $t\in [0,\tau]$ and $u$ is the weak solution of $\PPhi[f]$.
\end{proof}

Next we want to investigate under which conditions the weak solution is $L^p$-strong.
\begin{definition}
Let $1<p<\infty$. We say that $A$ satisfies \emphdef{maximal $L^p$-regularity}, if for all $f\in L^p(0,\tau;X)$ one has $T*f\in W^{1,p}(0,\tau;X)$. This means that the weak solution $u$ of
\[
    \left\{\begin{aligned}
        &\dot{u}(t)+Au(t)=f(t), \\
        &u(0)=0
    \end{aligned}\right.
\]
is in $\MR_p := W^{1,p}(0,\tau;X)\cap L^p(0,\tau;D(A))$, where $D(A)$ carries the graph norm.
This property does not depend on $1<p<\infty$ or $\tau\in(0,\infty)$.
We write $A\in\cMR(X)$ to say that $A$ satisfies maximal regularity.
\end{definition}

If $A\in\cMR(X)$, then $-A$ generates a holomorphic semigroup by a result of Dore~\cite[Theorem~17.2.15]{HNVW23}, see~\cite{AB10} for a slightly more general result.

The converse is true if $X$ is a Hilbert space: If $T$ is holomorphic, then $A\in\cMR(X)$ (de Simon's Theorem, see~\cite[Corollary~17.3.8]{HNVW23}).
However, it is known that in a large class of Banach spaces, including separable Banach lattices, only Hilbert spaces have the property that the generator of a holomorphic $C_0$-semigroup is automatically in $\cMR(X)$. This is a result of Kalton--Lancien~\cite{KL00}, with a simpler proof due to Fackler~\cite{Fac13}.

On UMD spaces $X$, one has $A\in\cMR(X)$ if and only if $A$ is R-sectorial, see~\cite[Theorem~17.3.1]{HNVW23}.
In our context, it is interesting that a quite simple proof of this result was given in~\cite[Corollary~5.2]{AB02} via the periodic problem, a special case of our subject today.

Now we turn to maximal $L^p$-regularity for the problem $\PPhi$.
Let $1<p<\infty$.
We denote by 
\[
    \tr_p := \{w(0) : w\in \MR_p\}
\]
the \emphdef{trace space of $\MR_p$}. 
If $T$ is holomorphic, then $\tr_p$ coincides with a real interpolation space, namely
\[
    \tr_p = (X,D(A))_{\frac{1}{p'},p},
\]
where $\frac{1}{p}+\frac{1}{p'}=1$.
Moreover,
\[
    \tr_p = \{x\in X : AT(\cdot)x\in L^p(0,\tau; X)\},
\]
see~\cite[Propositions~2.2.2, 1.2.10]{Lun95}.

\begin{definition}
Let $1<p<\infty$ and $f\in L^p(0,\tau;X)$.
A \emphdef{strong $L^p$-solution} of~\eqref{eq:inhom-cauchy} is a function $u\in W^{1,p}(0,\tau;X)$ such that $u(t)\in D(A)$ a.e.\ and $\dot{u}+Au=f$ a.e.
\end{definition}

\begin{prop}
Let $1<p<\infty$ and $f\in L^p(0,\tau;X)$.
\begin{alenum}
\item Each strong $L^p$-solution of~\eqref{eq:inhom-cauchy} is a weak solution.
\item Let $u$ be a weak solution of~\eqref{eq:inhom-cauchy}. Then $u$ is a strong $L^p$-solution of~\eqref{eq:inhom-cauchy} if and only if $u\in W^{1,p}(0,\tau;X)$. In that case $u\in\MR_p$.
\end{alenum}
\end{prop}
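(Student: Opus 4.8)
The plan is to reduce both parts to the definitions together with the integrated characterisation of weak solutions in Proposition~\ref{prop:bal77}\ref{en:c-ws}.

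For part (a) I would argue directly. If $u$ is a strong $L^p$-solution then $u\in W^{1,p}(0,\tau;X)\subseteq C([0,\tau];X)$ by the one-dimensional Sobolev embedding, so the continuity requirement is automatic. Fixing $x'\in D(A')$, since $x'$ is bounded one has $\langle x',u\rangle\in W^{1,p}(0,\tau)\subseteq W^{1,1}(0,\tau)$ with $\frac{d}{dt}\langle x',u(t)\rangle=\langle x',\dot u(t)\rangle$ a.e. Substituting the pointwise equation $\dot u(t)=f(t)-Au(t)$ and using the adjoint relation $\langle x',Au(t)\rangle=\langle A'x',u(t)\rangle$, which is valid because $u(t)\in D(A)$ and $x'\in D(A')$, I recover exactly~\eqref{eq:weak-sol}. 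Hence $u$ is a weak solution.

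For part (b) one implication is immediate, since by definition a strong $L^p$-solution lies in $W^{1,p}(0,\tau;X)$. For the converse I would start from a weak solution $u\in W^{1,p}(0,\tau;X)$ and set $v(t):=\int_0^t u(s)\dx[s]$. Proposition~\ref{prop:bal77}\ref{en:c-ws} gives $v(t)\in D(A)$ together with the identity
\[
    Av(t) = \int_0^t f(s)\dx[s] - \bigl(u(t)-u(0)\bigr) = \int_0^t g(s)\dx[s],
\]
where $g:=f-\dot u\in L^p(0,\tau;X)$ and I have used $u(t)-u(0)=\int_0^t\dot u(s)\dx[s]$.

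The heart of the matter, and the step I expect to be the main obstacle, is to pass from this integrated identity back to a pointwise statement; the only structural tool available is the closedness of $A$, which holds because $-A$ generates a $C_0$-semigroup. Since $u$ is continuous, $v$ is continuously differentiable with $\dot v=u$, so the difference quotients $\tfrac{1}{h}\bigl(v(t+h)-v(t)\bigr)$ converge to $u(t)$ for every $t$. Applying $A$ and using the displayed identity, I would write
\[
    A\,\frac{v(t+h)-v(t)}{h} = \frac{1}{h}\int_t^{t+h} g(s)\dx[s],
\]
whose right-hand side converges to $g(t)$ at each Lebesgue point of $g$, hence for almost every $t$. Closedness of $A$ then yields $u(t)\in D(A)$ and $Au(t)=g(t)=f(t)-\dot u(t)$ for a.e.\ $t$, that is $\dot u+Au=f$ a.e., so $u$ is a strong $L^p$-solution. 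Finally, from $Au=g\in L^p(0,\tau;X)$ I get $u\in L^p(0,\tau;D(A))$, which combined with $u\in W^{1,p}(0,\tau;X)$ shows $u\in\MR_p$. The care needed is to invoke Lebesgue points of $g$ and the closedness of $A$, rather than to differentiate $Av$ as though $A$ commuted with the limit.
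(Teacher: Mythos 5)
Your proposal is correct, and for part (b) --- the substantive part --- it is essentially the paper's own argument: the paper also forms the difference quotients
\[
    \frac{1}{h}\bigl(u(t+h)-u(t)\bigr) + A\,\frac{1}{h}\int_t^{t+h}u(s)\dx[s] = \frac{1}{h}\int_t^{t+h}f(s)\dx[s]
\]
from the integrated identity of Proposition~\ref{prop:bal77}\,\ref{en:c-ws} and concludes via closedness of $A$ at points where the a.e.\ limits exist; your bundling of $g:=f-\dot{u}$ is only a cosmetic streamlining (one set of Lebesgue points instead of two, with the left factor converging everywhere by continuity of $u$). Where you genuinely diverge is part (a): the paper does \emph{not} test against $D(A')$ but instead integrates the pointwise equation and uses closedness of $A$ (Hille's theorem) to write $A\int_0^t u(s)\dx[s]=\int_0^t Au(s)\dx[s]=-u(t)+u(0)+\int_0^t f(s)\dx[s]$, so that $u$ is a weak solution by the integrated characterization~\ref{en:c-ws}; your route verifies the defining identity~\eqref{eq:weak-sol} directly, using $\langle x',Au(t)\rangle=\langle A'x',u(t)\rangle$ for $x'\in D(A')$. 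Both are sound: your version is self-contained and checks the definition verbatim, while the paper's version keeps both directions of the proposition inside the single integrated framework of Proposition~\ref{prop:bal77}\,\ref{en:c-ws} and avoids any appeal to the adjoint. One small point worth making explicit in your write-up is the measurability of $u$ as a $D(A)$-valued function when concluding $u\in L^p(0,\tau;D(A))$; it follows since $t\mapsto(u(t),Au(t))$ is strongly measurable into $X\times X$ and $D(A)$ with the graph norm is isomorphic to the graph of $A$.
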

\begin{proof}
\begin{alenum}
\item Since $u\in W^{1,p}(0,\tau;X)$, one has $u\in C([0,\tau];X)$ and $u(t)-u(0)=\int_0^t \dot{u}(s)\dx[s]$.
Since $\dot{u}(t)+Au(t)=f(t)$ a.e., it follows that $Au\in L^p(0,\tau;X)$, $\int_0^t u(s)\dx[s]\in D(A)$ and 
\begin{align*}
    A\int_0^tu(s)\dx[s] &= \int_0^t Au(s)\dx[s] = -\int_0^t \dot{u}(s)\dx[s] + \int_0^t f(s)\dx[s] \\ 
    &= -u(t)+u(0) + \int_0^t f(s)\dx[s]
\end{align*}
for all $t\in [0,\tau]$. Thus $u$ is a weak solution by Proposition~\ref{prop:bal77}\,(c).

\item
Let $u\in W^{1,p}(0,\tau;X)$ be a weak solution and let $t\in [0,\tau]$. Then $\int_t^{t+h} u(s)\dx[s]\in D(A)$ and
\[
    \frac{1}{h}(u(t+h)-u(t)) + \frac{1}{h} A\int_t^{t+h}u(s)\dx[s] = \frac{1}{h}\int_t^{t+h}f(s)\dx[s]
\]
if $t+h\in [0,\tau]$. Recall that for almost every $t\in [0,\tau]$ one has
\[
    \lim_{h\to 0}\frac{1}{h}(u(t+h)-u(t))=\dot{u}(t)
\]
and
\[
    \lim_{h\to 0}\frac{1}{h}\int_t^{t+h}f(s)\dx[s] = f(t).
\]
Since $A$ is closed, it follows that $\dot{u}(t)+Au(t)=f(t)$ a.e.\qedhere
\end{alenum}
\end{proof}

The existence of strong $L^p$-solutions, without any time--boundary conditions being imposed, already implies that $A\in\cMR(X)$. In fact, the following holds.

\begin{prop}\label{prop:2.5}
Assume that $T$ is holomorphic. Let $1<p<\infty$. 
Assume that for all $f\in L^p(0,\tau;X)$ there exists a $u\in\MR_p$ such that $\dot{u}+Au=f$.
Then $A\in\cMR(X)$.
\end{prop}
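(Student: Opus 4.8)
The plan is to reduce everything to showing that the pure convolution $T*f$ lies in $\MR_p$, exploiting the hypothesis together with holomorphy. First I would fix $f\in L^p(0,\tau;X)$ and pick, by assumption, some $u\in\MR_p$ with $\dot u+Au=f$. Such a $u$ is in particular a strong $L^p$-solution, hence a weak solution of~\eqref{eq:inhom-cauchy}, so by Proposition~\ref{prop:bal77}\,\ref{en:a-ws} it admits the representation $u(t)=T(t)x+(T*f)(t)$ for some $x\in X$; evaluating at $t=0$ forces $x=u(0)$. Rearranging gives the identity
\[
    (T*f)(t) = u(t) - T(t)u(0) \qquad\text{for all } t\in[0,\tau].
\]

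The second step is to observe that the boundary value $u(0)$ lies in the trace space $\tr_p$, simply because $u\in\MR_p$ and $\tr_p=\{w(0):w\in\MR_p\}$ by definition. Here is where holomorphy enters decisively: by the characterization $\tr_p=\{x\in X: AT(\cdot)x\in L^p(0,\tau;X)\}$ valid for holomorphic $T$, the orbit satisfies $AT(\cdot)u(0)\in L^p(0,\tau;X)$. Since $\frac{d}{dt}T(t)u(0)=-AT(t)u(0)$ for a holomorphic semigroup and $T(\cdot)u(0)\in C([0,\tau];X)$, the fundamental theorem of calculus then yields $T(\cdot)u(0)\in W^{1,p}(0,\tau;X)$, and hence $T(\cdot)u(0)\in\MR_p$.

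Combining the two observations, $T*f=u-T(\cdot)u(0)$ is a difference of two elements of $\MR_p$, so $T*f\in\MR_p\subseteq W^{1,p}(0,\tau;X)$. As $f\in L^p(0,\tau;X)$ was arbitrary, this is precisely the defining property of $A\in\cMR(X)$, which completes the argument.

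The only delicate point, and the step I expect to be the main obstacle, is the second one: the hypothesis provides a solution with an entirely uncontrolled boundary value $u(0)$, and one must peel off the homogeneous term $T(\cdot)u(0)$ and argue that it too belongs to $\MR_p$. This is exactly where holomorphy is indispensable — it is what upgrades the mere membership $u(0)\in\tr_p$ into full $\MR_p$-regularity of the entire orbit $T(\cdot)u(0)$. Without holomorphy the identification of $\tr_p$ with $\{x\in X: AT(\cdot)x\in L^p(0,\tau;X)\}$ is unavailable and the subtraction argument breaks down.
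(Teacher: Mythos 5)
Your proposal is correct and takes essentially the same route as the paper: both isolate the convolution via the decomposition $T*f=u-T(\cdot)u(0)$, note $u(0)\in\tr_p$, and use holomorphy of $T$ to conclude that the homogeneous orbit $T(\cdot)u(0)$ lies in $\MR_p$. The only difference is that you make explicit, via the characterization $\tr_p=\{x\in X: AT(\cdot)x\in L^p(0,\tau;X)\}$, the step the paper compresses into ``since $x\in\tr_p$, one has $v\in\MR_p$''.
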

\begin{proof}
Let $f\in L^p(0,\tau;X)$ and let $u\in\MR_p$ be such that $\dot{u}+Au=f$.
Then $x:= u(0)\in\tr_p$. 
Let $w=T*f$. Then $w\in C([0,\tau];X)$ is a weak solution of $\dot{w}+Aw=f$ and $w(0)=0$.
Let $v=u-w$. Then $v\in C([0,\tau];X)$ is a weak solution of $\dot{v}+Av=0$ and $v(0)=x$.
Thus $v(t)=T(t)x$. Since $x\in\tr_p$, one has $v\in\MR_p$.
It follows that $T*f=u-v\in\MR_p$,
which proves the claim.
\end{proof}

Next we investigate strong $L^p$-solutions for generalized boundary conditions in time.
\begin{definition}
Let $1<p<\infty$ and let $\Phi\colon\tr_p\to X$ be linear. 
We say that problem $\PPhi$ satisfies \emphdef{maximal $L^p$-regularity}, if for all $f\in L^p(0,\tau;X)$ there exists a unique strong $L^p$-solution $u$ of~\eqref{eq:inhom-cauchy} satisfying $u(0)=\Phi u(\tau)$.
As a consequence, all the three terms $\dot{u}$, $Au$, $f$ are in $L^p(0,\tau;X)$.
\end{definition}

We already know that $A\in\cMR(X)$ whenever $\PPhi$ satisfies maximal $L^p$-regularity for some $1<p<\infty$ and the semigroup $T$ is holomorphic.
We state the following further consequences of maximal $L^p$-regularity for $\PPhi$.

\begin{prop}\label{prop:2.9new}
Assume that $T$ is holomorphic.
Let $1<p<\infty$ and let $\Phi\colon\tr_p\to X$ be linear.
Assume that $\PPhi$ satisfies maximal $L^p$-regularity.
Then the following holds.

\begin{alenum}
\item For all $y\in\tr_p$ there exists a unique $x\in\tr_p$ such that $(I-\Phi T(\tau))x=y$.
\item The map $I-\Phi T(\tau)\colon X\to X$ is surjective.
\item If in addition
\begin{equation}\label{eq:2.7}
    x-\Phi T(\tau) x = 0\quad\text{implies $x=0$, for every $x\in X$,}
\end{equation}
then $\Phi\tr_p\subset\tr_p$.
\end{alenum}
\end{prop}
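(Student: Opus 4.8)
The plan is to handle all three parts from a single forcing construction and then bootstrap, using throughout that holomorphy of $T$ gives $T(\tau)X\subset D(A)\subset\tr_p$, so that $\Phi T(\tau)$ is a well-defined linear map on all of $X$ and every quantity below to which $\Phi$ is applied genuinely lies in $\tr_p$.

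For the surjectivity in part (b), given $y\in X$ I would set $f(t):=\frac1\tau T(t)y\in L^p(0,\tau;X)$, for which $(T*f)(\tau)=T(\tau)y$. Maximal regularity of $\PPhi$ furnishes a solution $u\in\MR_p$ with $u(0)=\Phi u(\tau)$; writing $u=T(\cdot)u(0)+T*f$ and inserting this into the boundary condition yields $(I-\Phi T(\tau))u(0)=\Phi T(\tau)y$. Putting $x:=u(0)+y$ and cancelling gives $(I-\Phi T(\tau))x=y$, which proves (b). Since $u(0)$ is the initial value of an $\MR_p$-function, $u(0)\in\tr_p$; hence for $y\in\tr_p$ the same $x=u(0)+y$ lies in $\tr_p$, which is the existence statement in part (a). Uniqueness in (a) I would obtain separately: if $x\in\tr_p$ satisfies $x=\Phi T(\tau)x$, then $T(\cdot)x\in\MR_p$ is a strong $L^p$-solution of $\PPhi[0]$ meeting the boundary condition, so by the uniqueness built into maximal regularity it vanishes and $x=0$.

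Part (c) is the delicate point, the pitfall being that the term $\Phi T(\tau)z$ recurs and one may not presume it lies in $\tr_p$, as that is essentially the conclusion sought. I would first establish the auxiliary fact that, under \eqref{eq:2.7}, one has $\Phi T(\tau)x\in\tr_p$ for every $x\in X$. Indeed, fix $x\in X$, set $y:=(I-\Phi T(\tau))x$, and run the construction above for this $y$; the resulting $x_y=u(0)+y$ satisfies $(I-\Phi T(\tau))x_y=y=(I-\Phi T(\tau))x$, so injectivity \eqref{eq:2.7} forces $x_y=x$, whence $\Phi T(\tau)x=x_y-y=u(0)\in\tr_p$. To then remove the $T(\tau)$, given $w\in\tr_p$ I would choose $g(t):=\frac{t}{\tau}T(\tau-t)w$, which belongs to $\MR_p$ with $g(0)=0$ and $g(\tau)=w$ (the integrability of $AT(\tau-\cdot)w$ is exactly the statement $w\in\tr_p$). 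Let $u\in\MR_p$ solve $\PPhi[\dot g+Ag]$; as $u$ and $g$ solve the same equation, $u-g=T(\cdot)u(0)$, so $u(\tau)=w+T(\tau)u(0)$ and the boundary condition becomes $(I-\Phi T(\tau))u(0)=\Phi w$. Therefore $\Phi w=u(0)-\Phi T(\tau)u(0)$, where $u(0)\in\tr_p$ and, by the auxiliary fact applied to $u(0)\in X$, also $\Phi T(\tau)u(0)\in\tr_p$; hence $\Phi w\in\tr_p$.

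The main obstacle is precisely this self-reference in part (c), and the decisive move is the auxiliary claim $\Phi T(\tau)(X)\subset\tr_p$: it breaks the circle because it is a statement about $\Phi T(\tau)$ on all of $X$ and follows from injectivity \eqref{eq:2.7} together with the surjectivity construction, rather than from any presupposed mapping property of $\Phi$. Beyond this the only care needed is to confirm at each step that $\Phi$ is evaluated solely on elements of $\tr_p$—such as $u(\tau)$, $w$ and $T(\tau)u(0)$—which holds because traces of $\MR_p$-functions and the range of $T(\tau)$ are contained in $\tr_p$.
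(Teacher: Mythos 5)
Your proposal is correct and follows essentially the same route as the paper's proof: the same forcing $f(t)=\frac{1}{\tau}T(t)y$ for surjectivity, the same injectivity argument via $T(\cdot)x\in\MR_p$ solving $\PPhi[0]$, and for part~(c) the same device of solving $\PPhi$ with data coming from an $\MR_p$-lift of a trace value at $\tau$ and invoking~\eqref{eq:2.7} to identify solutions. Your auxiliary fact $\Phi T(\tau)X\subset\tr_p$ is a repackaging of the paper's intermediate step that membership in $\tr_p$ transfers along $(I-\Phi T(\tau))z=y$, and your explicit lift $g(t)=\frac{t}{\tau}T(\tau-t)w$ with $g(0)=0$ merely spares the $w(0)$-correction the paper performs at the end.
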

\begin{proof}
\begin{parenum}
\item Let $x\in\tr_p$ be such that $\Phi T(\tau)x=x$.
Then $u=T(\cdot)x\in\MR_p$,
$\dot{u}+Au=0$ and $u(0)=\Phi u(\tau)$.
Thus $u=0$ and $x=0$. We have shown that $I-\Phi T(\tau)\colon\tr_p\to X$ is injective.

\item Let $y\in X$, $f(t)=\frac{1}{\tau} T(t)y$. Then $f\in L^p(0,\tau;X)$ and $(T*f)(\tau)=T(\tau)y$. Since $A\in\cMR(X)$, $T*f\in\MR_p$. By assumption there exists a unique $u\in\MR_p$ such that $\dot{u}+Au=f$, $u(0)=\Phi u(\tau)$.
Then $v=u-T*f\in\MR_p$ and $\dot{v}+Av=0$. Thus $v(t)=T(t)x$, where $x=u(0)\in\tr_p$.
Since $\Phi u(\tau)=x$, it follows that
\[
    \Phi T(\tau)x + \Phi T(\tau)y = \Phi u(\tau) = u(0) = x.
\]
Hence
\[
    x-\Phi T(\tau)x = \Phi T(\tau)y.
\]
This is equivalent to
\begin{equation}\label{eq:2.9}
    (I-\Phi T(\tau))(x+y) = y.
\end{equation}
We have shown that for all $y\in X$ there exists a unique $x\in\tr_p$ such that~\eqref{eq:2.9} holds.
Consequently, for all $y\in X$ there exists a $z\in X$ such that
\begin{equation}\label{eq:2.10}
    (I-\Phi T(\tau))z = y.
\end{equation}
Moreover, if $y\in\tr_p$, then $x+y\in\tr_p$. Thus there exists a unique $z\in\tr_p$ such that~\eqref{eq:2.10} holds. Thus~(a) and~(b) are proved.

\item\label{en:step3}
Assume in addition~\eqref{eq:2.7}.
Then for $y,z\in X$ satisfying~\eqref{eq:2.10} one has $y\in\tr_p$ if and only if $z\in\tr_p$.
In fact, assume that $z\in\tr_p$. By~\eqref{eq:2.9} there exists $x\in\tr_p$ such that
$(I-\Phi T(\tau))(x+y)=y$. Then~\eqref{eq:2.7} implies that $z=x+y$. Hence $y=z-x\in\tr_p$.

\item
Let $y\in\tr_p$. We show that $\Phi y\in\tr_p$. Since $y\in\tr_p$, there exists $w\in\MR_p$ such that $w(\tau)=y$. By assumption, there exists $u\in\MR_p$ such that $\dot{u}+Au=f :=\dot{w}+Aw$ and $u(0)=\Phi u(\tau)$. Then $v := u-w\in\MR_p$ and $\dot{v}+Av=0$. 
Consequently, there exists $x\in\tr_p$ such that $v(t)= T(t)x$ for all $t\in[0,\tau]$.
Since $u(0)=\Phi u(\tau)$, it follows that $x-\Phi T(\tau)x=\Phi y-w(0)$.

Now~\ref{en:step3} implies that $\Phi y-w(0)\in\tr_p$. Since $w(0)\in\tr_p$, this implies $\Phi y\in\tr_p$.\qedhere
\end{parenum}
\end{proof}

\begin{prop}\label{prop:2.10new}
Assume that $T$ is holomorphic.
Let $1<p<\infty$ and let $\Phi\colon\tr_p\to\tr_p$ be linear.
Then the following statements are equivalent.
\begin{romanenum}
\item $A\in\cMR(X)$ and $I-\Phi T(\tau)\colon \tr_p\to\tr_p$ is bijective. 
\item $\PPhi$ satisfies maximal $L^p$-regularity.
\end{romanenum}
\end{prop}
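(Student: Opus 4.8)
The plan is to prove the two implications separately, noting that the substantive content lies in (i)$\Rightarrow$(ii), whereas (ii)$\Rightarrow$(i) is essentially a repackaging of results already established. For (ii)$\Rightarrow$(i): assuming that $\PPhi$ satisfies maximal $L^p$-regularity, for every $f\in L^p(0,\tau;X)$ there is a strong $L^p$-solution $u\in\MR_p$ of $\dot u+Au=f$, and this already forces $A\in\cMR(X)$ by Proposition~\ref{prop:2.5}. The bijectivity of $I-\Phi T(\tau)\colon\tr_p\to\tr_p$ is then exactly the content of Proposition~\ref{prop:2.9new}\,(a): for every $y\in\tr_p$ there is a unique $x\in\tr_p$ with $(I-\Phi T(\tau))x=y$. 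Since $\Phi$ maps $\tr_p$ into $\tr_p$ and $T(\tau)\tr_p\subset\tr_p$, the operator $I-\Phi T(\tau)$ does map $\tr_p$ into $\tr_p$, so this is precisely its bijectivity as a map $\tr_p\to\tr_p$.

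For the converse (i)$\Rightarrow$(ii), fix $f\in L^p(0,\tau;X)$. The first step is to parametrise all strong $L^p$-solutions of $\dot u+Au=f$. Since $A\in\cMR(X)$ we have $w:=T*f\in\MR_p$, and $w$ is the strong solution with $w(0)=0$. If $u\in\MR_p$ is any strong solution, then $v:=u-w\in\MR_p$ solves $\dot v+Av=0$, whence $v(t)=T(t)x$ with $x=v(0)=u(0)$; and $T(\cdot)x\in\MR_p$ if and only if $x\in\tr_p$. Conversely, for every $x\in\tr_p$ the function $u=T(\cdot)x+w$ is a strong $L^p$-solution with $u(0)=x$. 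Hence the strong $L^p$-solutions are exactly the functions $u=T(\cdot)x+w$ with $x\in\tr_p$.

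Next I rewrite the time--boundary condition. For $u=T(\cdot)x+w$ one has $u(\tau)=T(\tau)x+w(\tau)$, so $u(0)=\Phi u(\tau)$ is equivalent to
\[
    (I-\Phi T(\tau))x = \Phi\bigl(w(\tau)\bigr).
\]
The right-hand side lies in $\tr_p$: as $w\in\MR_p\embed C([0,\tau];\tr_p)$ the endpoint trace $w(\tau)$ belongs to $\tr_p$, and $\Phi$ maps $\tr_p$ into $\tr_p$. Since $I-\Phi T(\tau)\colon\tr_p\to\tr_p$ is bijective, there is a unique $x\in\tr_p$ solving this equation, and $u=T(\cdot)x+w$ is then a strong $L^p$-solution of $\PPhi[f]$. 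Uniqueness follows from injectivity of $I-\Phi T(\tau)$ on $\tr_p$: two solutions of $\PPhi[f]$ differ by some $T(\cdot)x_0$ with $x_0\in\tr_p$ and $(I-\Phi T(\tau))x_0=0$, forcing $x_0=0$.

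The main obstacle I anticipate is justifying that the endpoint trace $w(\tau)=(T*f)(\tau)$ lies in $\tr_p$, so that $\Phi$ may be applied to it and the bijectivity hypothesis, which only concerns $\tr_p\to\tr_p$, becomes available. This rests on the trace embedding $\MR_p\embed C([0,\tau];\tr_p)$ with $\tr_p=(X,D(A))_{1/p',p}$ \emph{at the right endpoint} $t=\tau$, not merely at $t=0$; once this is in place, everything else is the routine variation-of-constants parametrisation together with $A\in\cMR(X)$.
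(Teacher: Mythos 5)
Your proposal is correct and follows essentially the same route as the paper: variation of constants $u=T(\cdot)x+(T*f)$ reduces the boundary condition to $(I-\Phi T(\tau))x=\Phi((T*f)(\tau))$ in $\tr_p$, and the converse is obtained from Propositions~\ref{prop:2.5} and~\ref{prop:2.9new} exactly as the paper does. The endpoint-trace point you flag as the main obstacle is harmless and is treated as immediate in the paper: since $\MR_p$ is invariant under the time reflection $t\mapsto\tau-t$, one has $(T*f)(\tau)\in\tr_p$ directly from $T*f\in\MR_p$ (equivalently, from the embedding $\MR_p\embed C([0,\tau];\tr_p)$).
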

\begin{proof}
\begin{parenum}
\item[(i)$\Rightarrow$(ii).]
Let $f\in L^p(0,\tau;X)$. Then $(T*f)(\tau)\in\tr_p$ since $T*f\in\MR_p$.
By~(i) there exists a unique $x\in\tr_p$ such that
\[
    x-\Phi T(\tau)x = \Phi((T*f)(\tau)).
\]
Let $u(t) = T(t)x + (T*f)(t)$. Then $u\in\MR_p$, $\dot{u}+Au=f$ and $\Phi u(\tau) = \Phi T(\tau)x + \Phi((T*f)(\tau))=x=u(0)$. This shows existence in~(ii).
In order to show uniqueness, let $u\in\MR_p$ be such that $\dot{u}+Au=0$, $u(0)=\Phi u(\tau)$.
Then there exists $x\in\tr_p$ such that $u(t)=T(t)x$. Hence $\Phi T(\tau)x=x$.
Now~(i) implies $x=0$.

\item[(ii)$\Rightarrow$(i).]
 Follows from Proposition~\ref{prop:2.9new}.\qedhere
\end{parenum}
\end{proof}

If we assume weak well-posedness, then we obtain the following characterization.
\begin{thm}\label{thm:2.10new}
Assume that $T$ is holomorphic.
Let $1<p<\infty$.
Let $\Phi\colon X\to X$ be linear such that $I-\Phi T(\tau)\colon X\to X$ is bijective.
Then the following statements are equivalent.
\begin{romanenum}
\item $\PPhi$ satisfies maximal $L^p$-regularity.
\item $A\in\cMR(X)$, $\Phi \tr_p\subset\tr_p$ and $(I-\Phi T(\tau))^{-1}\tr_p\subset\tr_p$.
\end{romanenum}
\end{thm}
\begin{proof}
(i)$\Rightarrow$(ii) follows from Proposition~\ref{prop:2.9new}, and  (ii)$\Rightarrow$(i) follows from Proposition~\ref{prop:2.10new}.
\end{proof}

By specializing to periodic boundary conditions, we obtain the following.
\begin{cor}\label{cor:2.8}
Assume that the semigroup $T$ is holomorphic and that $I-T(\tau)\in\Linop(X)$ is invertible.
Let $1<p<\infty$. Then the following statements are equivalent.
\begin{romanenum}
\item For all $f\in L^p(0,\tau;X)$ there exists a unique $u\in W^{1,p}(0,\tau;X)\cap L^p(0,\tau;D(A))$ such that
\[
    \left\{\begin{aligned}
        &\dot{u}+Au = f, \\
        &u(0) = u(\tau).
    \end{aligned}\right.
\]
\item $A\in\cMR(X)$.
\end{romanenum}
\end{cor}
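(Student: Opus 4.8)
The plan is to read the corollary as the special case $\Phi=I$ of Theorem~\ref{thm:2.10new}. For $\Phi=I$ one has $I-\Phi T(\tau)=I-T(\tau)$, and the standing hypothesis of the corollary---that $I-T(\tau)\in\Linop(X)$ be invertible---is exactly the bijectivity of $I-\Phi T(\tau)$ on $X$ demanded in Theorem~\ref{thm:2.10new}. Statement~(i) of the corollary is then condition~(i) of that theorem, so the theorem reduces the claim to checking that, in the present situation, condition~(ii) there collapses to $A\in\cMR(X)$ alone. Concretely, I would verify that the two additional inclusions $\Phi\tr_p\subset\tr_p$ and $(I-\Phi T(\tau))^{-1}\tr_p\subset\tr_p$ are automatic when $\Phi=I$.

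The first inclusion is immediate, since for $\Phi=I$ it merely reads $\tr_p\subset\tr_p$. For the second I would exploit the smoothing of the holomorphic semigroup. Given $y\in\tr_p$, set $x=(I-T(\tau))^{-1}y\in X$, so that $x=y+T(\tau)x$. Because $T$ is holomorphic, $T(\tau)z\in D(A)$ for every $z\in X$, and $D(A)\subset\tr_p$: indeed, for $w\in D(A)$ the map $AT(\cdot)w=T(\cdot)Aw$ is continuous on $[0,\tau]$ and hence lies in $L^p(0,\tau;X)$, so by the description $\tr_p=\{x\in X:AT(\cdot)x\in L^p(0,\tau;X)\}$ we have $w\in\tr_p$. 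Thus $T(\tau)x\in\tr_p$, and since $y\in\tr_p$ by hypothesis, we conclude $x\in\tr_p$. This establishes $(I-T(\tau))^{-1}\tr_p\subset\tr_p$.

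With both inclusions in hand, Theorem~\ref{thm:2.10new} yields that $\PPhi$ with $\Phi=I$ satisfies maximal $L^p$-regularity if and only if $A\in\cMR(X)$, which is precisely the asserted equivalence. I expect no real obstacle here: the only substantive point is the second inclusion, and it rests entirely on the fact that a holomorphic semigroup maps $X$ into $D(A)\subset\tr_p$ at any positive time, so that the correction term $T(\tau)x$ in $x=y+T(\tau)x$ stays inside $\tr_p$ for free.
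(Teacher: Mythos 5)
Your proposal is correct and follows essentially the paper's own route: the paper also reduces to Theorem~\ref{thm:2.10new} with $\Phi=I$ and verifies the one nontrivial inclusion via exactly your observation $T(\tau)X\subset D(A)\subset\tr_p$, so that $x=(I-T(\tau))^{-1}y=T(\tau)x+y\in\tr_p$. The only cosmetic difference is that for (i)$\Rightarrow$(ii) the paper cites Proposition~\ref{prop:2.5} directly, whereas you invoke the corresponding direction of Theorem~\ref{thm:2.10new}, which rests on the same fact.
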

\begin{proof}
\begin{parenum}
\item[(i)$\Rightarrow$(ii).] Proposition~\ref{prop:2.5}.
\item[(ii)$\Rightarrow$(i).]
Observe that $T(\tau)X\subset D(A)\subset\tr_p$.
Let $y\in\tr_p$, $x=(I-T(\tau))^{-1}y$.
Then $x=T(\tau)x+y\in\tr_p$.
Thus~(ii) of Theorem~\ref{thm:2.10new} is satisfied with $\Phi=I$.\qedhere
\end{parenum}
\end{proof}

In the setting of Corollary~\ref{cor:2.8} where $\Phi=I$, the question of whether $\PPhi$ satisfies $L^p$-maximal regularity is independent of $p\in(1,\infty)$.
However, there exist $\Phi$ for which maximal $L^p$-regularity of $\PPhi$ depends on $p\in(1,\infty)$.

\begin{example}
Assume that $A\in\cMR(X)$.
Let $1<p<q<\infty$. Then $\tr_q\subset\tr_p$. Choose an example where $\tr_q\neq \tr_p$. (This can be arranged using~\cite[Example~1.10]{Lun09}, for example.)
Take $x_0\in\tr_p\setminus\tr_q$ and $x'\in X'$ such that $\langle x',x_0\rangle\ne 0$ and $\norm{T(\tau)}_{\Linop(X)}\norm{x_0}_X\norm{x'}_{X'}<1$.
Let $\Phi\in\Linop(X)$ be the rank-$1$ operator given by $\Phi x = \langle x',x\rangle x_0$.
Then by Theorem~\ref{thm:2.10new} the problem $\PPhi$ is maximal $L^p$-regular, but not maximal $L^q$-regular.
\end{example}

Thus we have obtained a quite complete characterization of when $\PPhi$ satisfies maximal $L^p$-regularity.
However, we were assuming throughout that $-A$ is the generator of a $C_0$-semigroup.
This condition is necessary if $\Phi=0$ (see~\cite[Theorem~4.1]{AB10} for a precise formulation), but not if $\Phi=I$.
In fact, the following result holds on a Hilbert space $X=H$, see~\cite[Theorem~3.1]{AB10}.
\begin{thm}
Let $D,H$ be Hilbert spaces such that $D\embed H$.
Let $1<p<\infty$ and let $A\in\Linop(D,H)$. Then the following statements are equivalent.
\begin{romanenum}
\item For all $f\in L^p(0,2\pi;H)$ there exists a unique $u\in W^{1,p}(0,2\pi;H)\cap L^p(0,2\pi;D)$ such that
\[
    \left\{\begin{aligned}
        &\dot{u}+Au = f, \\
        &u(0) = u(2\pi).
    \end{aligned}\right.
\]
\item For every $k\in\ZZ$ the operator $ik-A$ is invertible and
\[
    \sup_{k\in\ZZ}\norm{(ik-A)^{-1}}_{\Linop(H,D)}<\infty.
\]
\end{romanenum}
\end{thm}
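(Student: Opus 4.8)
The plan is to pass to Fourier series on the circle. Since we work on $[0,2\pi]$ with the periodic condition $u(0)=u(2\pi)$, every admissible $u$ expands as $u=\sum_{k\in\ZZ}\hat u_k e^{ikt}$, and the equation $\dot u+Au=f$ is equivalent to the family of algebraic relations $(ik+A)\hat u_k=\hat f_k$, $k\in\ZZ$. Note first that condition~(ii) is unchanged if $ik-A$ is replaced by $ik+A$: putting $j=-k$ one has $ik+A=-(ij-A)$, so $ik+A$ is invertible iff $ij-A$ is, with inverses of equal norm; since $k$ ranges over all of $\ZZ$, the two suprema coincide. Thus the relevant object is the sequence of resolvents $R_k:=(ik+A)^{-1}\in\Linop(H,D)$, and the theorem becomes the statement that $(R_k)$ is an operator-valued $L^p$-Fourier multiplier. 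The key tool is the periodic operator-valued Marcinkiewicz multiplier theorem of Arendt and Bu~\cite{AB02}: on a Hilbert space (which is UMD, and where $R$-boundedness coincides with norm boundedness) a sequence $(M_k)\subset\Linop(H,D)$ is an $L^p$-multiplier, $1<p<\infty$, provided $\sup_k\norm{M_k}$ and $\sup_k\norm{k(M_{k+1}-M_k)}$ are finite.

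For (i)$\Rightarrow$(ii) I would first show that the solution operator $S\colon f\mapsto u$ is bounded from $L^p(0,2\pi;H)$ into $\MR_p:=W^{1,p}(0,2\pi;H)\cap L^p(0,2\pi;D)$. This follows from the closed graph theorem: if $f_n\to f$ and $Sf_n\to v$ in $\MR_p$, then the derivatives converge to $\dot v$ and $A(Sf_n)\to Av$ in $L^p(0,2\pi;H)$, so $\dot v+Av=f$, while $\MR_p\embed C([0,2\pi];H)$ forces $v(0)=v(2\pi)$; uniqueness then gives $v=Sf$. To read off~(ii), test with $f(t)=e^{ikt}y$ for $y\in H$. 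The function $e^{ikt}x$ with $x\in D$ solves the problem precisely when $(ik+A)x=y$; existence of a solution $u\in\MR_p$ together with the relation $(ik+A)\hat u_k=\hat f_k$ (taking the $k$-th Fourier coefficient, legitimate since $u\in L^p(0,2\pi;D)$ yields $\hat u_k\in D$) gives surjectivity of $ik+A\colon D\to H$, while uniqueness gives injectivity. Finally, by uniqueness the solution of $f=e^{ikt}y$ is exactly $e^{ikt}x$ with $x=(ik+A)^{-1}y$, so comparing $\norm{e^{ikt}x}_{L^p(0,2\pi;D)}=(2\pi)^{1/p}\norm{x}_D$ with $\norm{e^{ikt}y}_{L^p(0,2\pi;H)}=(2\pi)^{1/p}\norm{y}_H$ and using $\norm{Sf}_{\MR_p}\le\norm{S}\,\norm{f}$ produces the uniform bound $\norm{(ik+A)^{-1}}_{\Linop(H,D)}\le\norm{S}$.

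For (ii)$\Rightarrow$(i) I would define $u$ through its Fourier coefficients $\hat u_k:=R_k\hat f_k$ and verify the Marcinkiewicz conditions for the two multipliers recovering $u$ and $\dot u$. The resolvent identity gives $R_{k+1}-R_k=-i\,R_{k+1}R_k$ (composition $H\to D\embed H\to D$). The extra decay needed comes from $\norm{R_k}_{\Linop(H)}=O(1/\abs k)$: writing $ikR_k=I-AR_k$ and bounding $\norm{AR_k}_{\Linop(H)}\le\norm{A}_{\Linop(D,H)}\norm{R_k}_{\Linop(H,D)}$ uniformly (this is where $A\in\Linop(D,H)$ enters) shows $\norm{ikR_k}_{\Linop(H)}$ is bounded, hence $\norm{R_k}_{\Linop(H)}\le C/\abs k$. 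Therefore $\norm{k(R_{k+1}-R_k)}_{\Linop(H,D)}\le\norm{R_{k+1}}_{\Linop(H,D)}\,\norm{kR_k}_{\Linop(H)}$ is bounded, so $(R_k)$ is a multiplier $L^p(0,2\pi;H)\to L^p(0,2\pi;D)$ and $u\in L^p(0,2\pi;D)$. For the derivative, the multiplier is $P_k:=ikR_k=I-AR_k$, uniformly bounded on $H$, with $P_{k+1}-P_k=-A(R_{k+1}-R_k)$, whence $\norm{k(P_{k+1}-P_k)}_{\Linop(H)}\le\norm{A}_{\Linop(D,H)}\norm{k(R_{k+1}-R_k)}_{\Linop(H,D)}$ is bounded; since $\sum P_k\hat f_k e^{ikt}$ has $k$-th coefficient $ik\hat u_k$, this function is $\dot u$, so $u\in\MR_p$ with $\dot u+Au=f$. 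Uniqueness is immediate from injectivity of $ik+A$.

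The main obstacle is the second (difference) Marcinkiewicz condition, whose verification hinges on upgrading the uniform $\Linop(H,D)$-bound in~(ii) to the decay $\norm{(ik+A)^{-1}}_{\Linop(H)}=O(1/\abs k)$; this is exactly the point where the boundedness $A\in\Linop(D,H)$ is used and is what makes the factor $k$ in $k(R_{k+1}-R_k)$ harmless. The only other nontrivial ingredient is the periodic operator-valued multiplier theorem itself, which on a Hilbert space is available in the simple norm-bounded form quoted above.
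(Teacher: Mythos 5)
Your proof is correct and follows essentially the same route as the source: the paper does not prove this theorem itself but cites \cite[Theorem~3.1]{AB10}, whose argument is exactly your reduction of the periodic problem to the discrete operator-valued Marcinkiewicz multiplier theorem of \cite{AB02}, with R-boundedness collapsing to uniform norm boundedness since $H$ and $D$ are Hilbert spaces. Your key steps match the standard ones: testing with $f(t)=e^{ikt}y$ plus a closed graph argument for (i)$\Rightarrow$(ii), and for the converse the decay $\norm{(ik+A)^{-1}}_{\Linop(H)}=O(1/\abs{k})$ obtained from $ik(ik+A)^{-1}=I-A(ik+A)^{-1}$ together with $A\in\Linop(D,H)$, which makes the difference condition for both multipliers $\bigl((ik+A)^{-1}\bigr)_k$ and $\bigl(ik(ik+A)^{-1}\bigr)_k$ verifiable.
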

Condition~(ii) implies that $A$ is closed, seen as an unbounded operator in $H$, that $D$ is dense in $H$ and that there exists a bisector
\[
    B = \{re^{i\alpha}:r\ge r_0,\ \abs{\alpha}\in(\tfrac{\pi}{2}-\theta,\tfrac{\pi}{2}+\theta)\}
\]
with $r_0>0$ and $\theta\in(0,\frac{\pi}{2})$ such that $\lambda-A$ is invertible whenever $\lambda\in B$ and
\begin{equation}\label{eq:2.11}
    \sup_{\lambda\in B}\norm{\lambda(\lambda-A)^{-1}}_{\Linop(H)}<\infty.
\end{equation}
This condition does not imply that $-A$ generates a $C_0$-semigroup.
But if $-A$ generates a $C_0$-semigroup $T$, then~\eqref{eq:2.11} implies that $T$ is holomorphic.

\section{Non-autonomous evolution equations: constant domain}\label{sec:3a}

Let $X,D$ be Banach spaces such that $D\dembed X$ (i.e., $D$ is continuously and densely embedded in $X$).
For $1<p<\infty$ we consider
\[
    \MR_p := W^{1,p}(0,\tau;X)\cap L^p(0,\tau;D)
\]
with the corresponding trace space $\tr_p:=\{u(0) : u\in\MR_p\}$.
Then $\MR_p\subset C([0,\tau];\tr_p)$. Moreover, $\tr_p=(X,D)_{\frac{1}{p'},p}$.

In this section we consider operators $A\in\Linop(D,X)$ such that $A\in\cMR(X)$ if we set $D(A) := D\embed X$. This means $A$ with domain $D$ in $X$ satisfies maximal $L^p$-regularity for one, and hence all, $p\in(1,\infty)$.
Passing to the non-autonomous setting we now consider a continuous function $A\colon [0,\tau]\to\Linop(D,X)$ such that $A(t)\in\cMR(X)$ for all $t\in [0,\tau]$.
We keep this assumption throughout this section and fix $1<p<\infty$.
We recall the following result, cf.~\cite[Theorem~2.7]{ACFP07}.

\begin{thm}\label{thm:3a.1}
For all $f\in L^p(0,\tau;X)$ and all $x\in\tr_p$ there exists a unique $u\in\MR_p$ such that
\[
    \left\{\begin{aligned}
        &\dot{u}+A(\cdot)u = f,\\
        &u(0)=x.
    \end{aligned}\right.
\]
Here $\dot{u}+A(\cdot)u=f$ is an identity in $L^p(0,\tau;X)$; i.e., $\dot{u}(t)+A(t)u(t)=f(t)$ a.e.
\end{thm}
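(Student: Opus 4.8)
The plan is to prove Theorem~\ref{thm:3a.1} by the standard method of continuity (Kato's perturbation argument for non-autonomous problems with constant domain), reducing the non-autonomous equation to the autonomous case, which is available to us because each $A(t)\in\cMR(X)$. First I would reformulate the problem as the solvability of a linear operator equation. Define the solution operator for the full non-autonomous problem: let $L\colon\MR_p\to L^p(0,\tau;X)\times\tr_p$ be given by $Lu=(\dot u+A(\cdot)u,\,u(0))$. The claim is precisely that $L$ is bijective. Since $\MR_p\embed C([0,\tau];\tr_p)$ the trace map $u\mapsto u(0)$ is bounded, and continuity of $A(\cdot)$ into $\Linop(D,X)$ makes the multiplication $u\mapsto A(\cdot)u$ bounded from $\MR_p$ into $L^p(0,\tau;X)$; hence $L$ is bounded.

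The core of the argument is a homotopy. For $s\in[0,\tau]$ consider the \emph{frozen-coefficient} autonomous operator $A_s:=A(s)$ and, more generally, the family $L_\sigma u=(\dot u+A_\sigma(\cdot)u,\,u(0))$ interpolating between a constant operator and $A(\cdot)$; the cleanest version freezes the coefficient, i.e.\ one first shows that for each fixed $s$ the constant-coefficient operator $u\mapsto(\dot u+A(s)u,\,u(0))$ is an isomorphism $\MR_p\to L^p(0,\tau;X)\times\tr_p$. This is exactly the autonomous maximal-regularity statement combined with the surjectivity of the trace: given $f$ and $x\in\tr_p$, split $u=v+w$ where $w(t)=T_s(t)x$ handles the initial value (here $T_s$ is the semigroup generated by $-A(s)$, and $w\in\MR_p$ since $x\in\tr_p$) and $v=T_s*(f-\dot w-A(s)w)$ solves the zero-initial-value problem via $A(s)\in\cMR(X)$. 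Thus $L_{s}$ is invertible with a bound on $\|L_s^{-1}\|$ that is uniform in $s$ by continuity and compactness of $[0,\tau]$.

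The method of continuity then upgrades this to the genuinely non-autonomous $L$. Using uniform continuity of $A(\cdot)$, partition $[0,\tau]$ into subintervals so short that on each piece $A(\cdot)$ differs in $\Linop(D,X)$-norm from its frozen value by less than the reciprocal of the uniform inverse bound. On each subinterval the operator $\dot u+A(\cdot)u$ is a small perturbation of the invertible frozen operator, hence invertible by a Neumann series; one then solves successively on the subintervals, feeding the terminal value of the solution on one piece as the initial datum (which lies in $\tr_p$, since $\MR_p\embed C([0,\tau];\tr_p)$) for the next. Concatenating gives a global solution, and uniqueness propagates the same way. The main obstacle is controlling the perturbation estimate so that the inverse bound stays uniform across the whole interval: one must verify that the norm of $u\mapsto (A(\cdot)-A(s))u$ as a map $\MR_p\to L^p(0,\tau;X)$ is genuinely small, which requires the multiplication bound $\|(A(\cdot)-A(s))u\|_{L^p(0,\tau;X)}\le \bigl(\sup_t\|A(t)-A(s)\|_{\Linop(D,X)}\bigr)\|u\|_{L^p(0,\tau;D)}$ together with the fact that the uniform bound on $\|L_s^{-1}\|$ does not degenerate as the subintervals shrink — the length-independence of maximal regularity (noted in the definition of $\cMR(X)$) is what makes this work. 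Since this is precisely the content of the cited~\cite[Theorem~2.7]{ACFP07}, I would ultimately invoke that reference, but the sketch above is the route I would reconstruct.
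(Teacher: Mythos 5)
Your proposal is correct and amounts to the same approach as the paper, which gives no proof of Theorem~\ref{thm:3a.1} at all but simply recalls it from \cite[Theorem~2.7]{ACFP07} --- the reference you yourself ultimately invoke. Your freezing-coefficients/method-of-continuity sketch (with the two key points correctly identified: the interval-length-independent maximal regularity constant, and the fact that $\tr_p=(X,D)_{\frac{1}{p'},p}$ is independent of $t$ because the domain is constant) faithfully reconstructs the perturbation argument behind that citation, which in fact works under the weaker hypothesis of relative continuity, as the paper remarks at the end of Section~\ref{sec:3a}.
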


\begin{cor}
There exists a unique operator $S(\tau)\in\Linop(\tr_p)$ such that for $x\in\tr_p$ one has $S(\tau)x=v(\tau)$, where $v\in\MR_p$ satisfies
\[
    \left\{\begin{aligned}
        &\dot{v}+A(\cdot)v=0,\\
        &v(0)=x.
    \end{aligned}\right.
\]
\end{cor}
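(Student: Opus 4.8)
The plan is to define a map on $\tr_p$ by solving, for each $x\in\tr_p$, the homogeneous non-autonomous problem $\dot v + A(\cdot)v = 0$, $v(0)=x$, and then reading off the terminal value $v(\tau)$. The existence and uniqueness of such a $v\in\MR_p$ is immediate from Theorem~\ref{thm:3a.1} applied with the right-hand side $f=0$; this fixes $v$ uniquely and hence makes the assignment $S(\tau)\colon x\mapsto v(\tau)$ well-defined as a function on $\tr_p$. The first thing I would record is that $v(\tau)$ really does lie in $\tr_p$: this is because $\MR_p\subset C([0,\tau];\tr_p)$, as noted at the start of Section~\ref{sec:3a}, so in particular $v(\tau)\in\tr_p$. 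Thus $S(\tau)$ maps $\tr_p$ into $\tr_p$.

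Next I would verify linearity. Given $x_1,x_2\in\tr_p$ and scalars $\alpha,\beta$, let $v_1,v_2$ be the corresponding homogeneous solutions. By linearity of the equation $\dot v + A(\cdot)v = 0$, the function $\alpha v_1 + \beta v_2$ lies in $\MR_p$, solves the same homogeneous equation, and has initial value $\alpha x_1 + \beta x_2$. By the uniqueness part of Theorem~\ref{thm:3a.1}, it is \emph{the} solution associated with $\alpha x_1+\beta x_2$, so evaluating at $\tau$ gives $S(\tau)(\alpha x_1+\beta x_2)=\alpha S(\tau)x_1+\beta S(\tau)x_2$. This also forces uniqueness of the operator $S(\tau)$ itself: its value at every $x$ is prescribed, so any two operators satisfying the defining property coincide.

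The one genuinely non-trivial point, and what I expect to be the main obstacle, is boundedness of $S(\tau)$, i.e.\ that $S(\tau)\in\Linop(\tr_p)$ rather than merely a linear map. I would obtain this from the closed graph theorem together with the continuous dependence implicit in Theorem~\ref{thm:3a.1}. Concretely, consider the solution operator sending data $(f,x)$ to the solution $u\in\MR_p$; uniqueness plus the closed graph theorem show this solution map is bounded from $L^p(0,\tau;X)\times\tr_p$ into $\MR_p$. Restricting to $f=0$ gives a bounded linear map $x\mapsto v$ from $\tr_p$ into $\MR_p$, and composing with the bounded evaluation $\MR_p\to\tr_p$, $v\mapsto v(\tau)$ (bounded because $\MR_p\embed C([0,\tau];\tr_p)$) yields $\norm{S(\tau)x}_{\tr_p}\le C\norm{x}_{\tr_p}$. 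Hence $S(\tau)$ is bounded, completing the proof.
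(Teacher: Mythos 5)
Your proposal is correct and follows exactly the argument the paper leaves implicit: the Corollary is stated without proof as an immediate consequence of Theorem~\ref{thm:3a.1}, with well-definedness from uniqueness, $v(\tau)\in\tr_p$ from the embedding $\MR_p\subset C([0,\tau];\tr_p)$, linearity from uniqueness of solutions, and boundedness from the closed graph theorem. You have simply written out the routine details the authors chose to omit, so there is nothing to compare beyond that.
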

Now we can prove similarly as in Section~\ref{sec:2} the following well-posedness result for generalized time--boundary conditions.
\begin{thm}
Let $\Phi\colon\tr_p\to\tr_p$ be linear. The following statements are equivalent.
\begin{romanenum}
\item $I-\Phi S(\tau)\colon\tr_p\to\tr_p$ is bijective.
\item For all $f\in L^p(0,\tau;X)$ there exists a unique $u\in\MR_p$ such that
\[
    \left\{\begin{aligned}
        &\dot{u}+A(\cdot)u = f,\\
        &u(0)=\Phi u(\tau).
    \end{aligned}\right.
\]
\end{romanenum}
\end{thm}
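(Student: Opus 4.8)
The plan is to reduce the boundary-value problem to a single linear equation on the trace space $\tr_p$, following the pattern of Proposition~\ref{prop:2.10new} with the semigroup orbit and $T(\tau)$ replaced by the non-autonomous data and $S(\tau)$. By Theorem~\ref{thm:3a.1}, every $f\in L^p(0,\tau;X)$ has a unique \emph{particular} solution $w\in\MR_p$ with $w(0)=0$, and every $x\in\tr_p$ yields a unique \emph{homogeneous} solution $v\in\MR_p$ with $\dot v+A(\cdot)v=0$ and $v(0)=x$, for which $v(\tau)=S(\tau)x$ by definition of $S(\tau)$. By linearity the unique solution of $\dot u+A(\cdot)u=f$ with $u(0)=x$ is $u=v+w$, so $u(\tau)=S(\tau)x+w(\tau)$. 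Since $\MR_p\subset C([0,\tau];\tr_p)$ we have $w(\tau)\in\tr_p$, and as $\Phi$ maps $\tr_p$ into $\tr_p$, the time--boundary condition $u(0)=\Phi u(\tau)$ is equivalent to the equation $(I-\Phi S(\tau))x=\Phi w(\tau)$ in $\tr_p$.

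For the implication (i)$\Rightarrow$(ii) I would, given $f$, form $w$, solve $(I-\Phi S(\tau))x=\Phi w(\tau)$ for the unique $x\in\tr_p$ using bijectivity, and set $u=v+w$ with $v(0)=x$; then $\Phi u(\tau)=\Phi S(\tau)x+\Phi w(\tau)=x=u(0)$, so $u$ is a solution. Uniqueness is automatic, since any solution has its initial value $u(0)$ solving the same equation $(I-\Phi S(\tau))u(0)=\Phi w(\tau)$ (the particular part $w$ depends only on $f$), so injectivity of $I-\Phi S(\tau)$ fixes $u(0)$, and the uniqueness in Theorem~\ref{thm:3a.1} then fixes $u$.

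For (ii)$\Rightarrow$(i), injectivity is immediate: if $x=\Phi S(\tau)x$, then the homogeneous solution $v$ with $v(0)=x$ satisfies $v(0)=\Phi v(\tau)$ and $\dot v+A(\cdot)v=0$, so by the uniqueness in~(ii) (with $f=0$) we get $v=0$ and $x=0$. For surjectivity I would imitate Proposition~\ref{prop:2.9new}\,(b): given $y\in\tr_p$, choose $f$ whose particular solution satisfies the prescribed endpoint $w(\tau)=S(\tau)y$; the solution $u$ supplied by~(ii) then has $u(\tau)=S(\tau)x+S(\tau)y=S(\tau)(x+y)$ with $x=u(0)$, and the boundary relation $x=\Phi S(\tau)(x+y)$ rearranges to $(I-\Phi S(\tau))(x+y)=y$, exhibiting $x+y\in\tr_p$ as a preimage of $y$.

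The only step requiring genuine care --- the non-autonomous substitute for the explicit choice $f(t)=\frac1\tau T(t)y$ in the autonomous proof --- is producing an $f$ that realizes a prescribed endpoint $w(\tau)=g\in\tr_p$. Here I would use that the endpoint trace $\{\phi(\tau):\phi\in\MR_p\}$ also equals $\tr_p$, via the reflection $t\mapsto\tau-t$ which preserves $\MR_p$; picking $\phi\in\MR_p$ with $\phi(\tau)=g$ and multiplying by a scalar cutoff $\eta\in C^\infty[0,\tau]$ with $\eta(0)=0$ and $\eta(\tau)=1$ yields $\tilde w:=\eta\phi\in\MR_p$ with $\tilde w(0)=0$ and $\tilde w(\tau)=g$. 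Setting $f:=\dot{\tilde w}+A(\cdot)\tilde w\in L^p(0,\tau;X)$ and invoking the uniqueness in Theorem~\ref{thm:3a.1} forces the particular solution for this $f$ to be $\tilde w$, so $w(\tau)=g$. The remaining manipulations are the routine linear bookkeeping already performed in Section~\ref{sec:2}.
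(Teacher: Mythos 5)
Your proof is correct and follows essentially the same route as the paper: the same decomposition $u=v+w$, the same reduction of the boundary condition to the equation $(I-\Phi S(\tau))x=\Phi w(\tau)$ in $\tr_p$, and for surjectivity the same construction of a particular solution with prescribed endpoint $S(\tau)y$ via a scalar cutoff vanishing at $0$. Your explicit reflection argument $t\mapsto\tau-t$ for the endpoint trace merely spells out what the paper uses implicitly when it asserts the existence of $w\in\MR_p$ with $w(\tau)=S(\tau)y$.
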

\begin{proof}
\begin{parenum}
\item Uniqueness in~(ii) is equivalent to injectivity of $I-\Phi S(\tau)$.
In fact, assume uniqueness in~(ii). Let $x\in\tr_p$ be such that $x-\Phi S(\tau)x=0$.
There exists $v\in\MR_p$ such that $\dot{v}+A(\cdot)v=0$ and $v(0)=x$.
Thus $v$ is a solution of with generalized time--boundary conditions for $f=0$.
Uniqueness implies that $v=0$ and therefore $x=0$.
Conversely, assume that $I-\Phi S(\tau)$ is injective. Let $v\in\MR_p$ be such that $\dot{v} + A(\cdot)v=0$, $v(0)=\Phi v(\tau)$.
Then $x:= v(0)\in\tr_p$ and $v(\tau)=S(\tau)x$. Thus $x-\Phi S(\tau)x=0$.
Hence $x=0$. It follows from Theorem~\ref{thm:3a.1} that $v=0$.

\item (i)$\Rightarrow$(ii).
Let $f\in L^p(0,\tau;X)$. By Theorem~\ref{thm:3a.1} there exists a $w\in\MR_p$ such that
$\dot{w}+A(\cdot)w=f$, $w(0)=0$. Then $w(\tau)\in\tr_p$. By~(i) there exists an $x\in\tr_p$ such that $x-\Phi S(\tau)x=\Phi w(\tau)$.
There exists a $v\in\MR_p$ such that $\dot{v}+A(\cdot)v=0$, $v(0)=x$. 
Then $u:=v+w\in\MR_p$, $\dot{u}+A(\cdot)u=f$, $u(0)=x$ and
\[
    \Phi u(\tau) = \Phi v(\tau) + \Phi w(\tau) = \Phi S(\tau)x + (x-\Phi S(\tau)x)=x.
\]

\item (ii)$\Rightarrow$(i).
We have to show that $I-\Phi S(\tau)$ is surjective. Let $y\in\tr_p$. Then $S(\tau)y\in\tr_p$.
Thus there exists $w\in\MR_p$ such that $w(\tau)=S(\tau)y$.
Replacing $w$ by $\phi w$ where $\phi\in C^1[0,\tau]$ with $\phi(0)=0$ and $\phi(\tau)=1$, we may assume that $w(0)=0$.
Then $\dot{w}+A(\cdot)w\in L^p(0,\tau;X)$.
By~(ii) there exists $u\in\MR_p$ such that $\dot{u}+A(\cdot)u=\dot{w}+A(\cdot)w$ and $u(0)=\Phi u(\tau)$. Let $v=u-w$. Then $v(0)=u(0)=:x$ and
$S(\tau)x = v(\tau)= u(\tau)-w(\tau)=u(\tau)-S(\tau)y$ as well as
$x=\Phi u(\tau) = \Phi S(\tau)x + \Phi S(\tau)y$.
Hence $x-\Phi S(\tau)x = \Phi S(\tau)y$. Consequently, $(I-\Phi S(\tau))(x+y)=y$.\qedhere
\end{parenum}
\end{proof}

In general it is difficult to verify~(i), even in the case of periodic boundary conditions (i.e., even if $\Phi=I$). In fact, it might be difficult to determine $\norm{S(\tau)}_{\Linop(\tr_p)}$ or the spectrum of $S(\tau)$ as an operator in $\Linop(\tr_p)$.
Frequently, more information is available for the corresponding operator in $X$.
Recall that an operator $B$ in $X$ is called \emphdef{accretive} if
\[
    \norm{x+tBx}_X\ge \norm{x}_X\quad\text{for all $x\in D(B)$ and $t>0$.}
\]
If $X=H$ is a Hilbert space, this is equivalent to 
\[
    \Re\scalar{Bx}{x}_H\ge 0\quad\text{for all $x\in D(B)$.}
\]
We recall the following well-posedness result from~\cite[Corollary~3.4]{ACFP07}.
Let $\MR_{p,\loc} := C([0,\tau];X)\cap W^{1,p}_\loc((0,\tau];X)\cap L^p_\loc((0,\tau];D)$.
\begin{prop}\label{prop:3a.4}
Assume that $A(t)$ is accretive for all $t\in [0,\tau]$.
Then for all $x\in X$ there is a unique 
\begin{equation}\label{eq:3a.1}
    \begin{cases}
        \text{$u\in\MR_{p,\loc}$ such that}\\
        \begin{aligned}
        &\dot{u}+A(\cdot)u =0,\\
        &u(0)=x.
        \end{aligned}
    \end{cases}
\end{equation}
Moreover, $\norm{u(t)}_X\le\norm{x}_X$ for all $t\in [0,\tau]$. 
Finally, the function $\tilde{u}$ given by $\tilde{u}(t) := t u(t)$ is in $\MR_p$ and
\begin{equation}\label{eq:3a.2}
    \norm{\tilde{u}}_{W^{1,p}(0,\tau;X)} + \norm{\tilde{u}}_{L^p(0,\tau;D)}\le M\norm{x}_X,
\end{equation}
where $M\ge 0$ is a constant independent of $x$.
\end{prop}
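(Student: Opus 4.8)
The plan is to establish everything first for initial data $x\in\tr_p$, where Theorem~\ref{thm:3a.1} with $f=0$ supplies a unique $u\in\MR_p$ (and hence $u\in\MR_{p,\loc}$) solving $\dot{u}+A(\cdot)u=0$, $u(0)=x$, and then to pass to general $x\in X$ by density of $\tr_p$ in $X$ together with the contraction estimate, which makes the approximation converge. The three substantive ingredients are: (1) the accretivity bound $\norm{u(t)}_X\le\norm{x}_X$; (2) the weighted regularity of $\tilde{u}(t)=tu(t)$, extracted from maximal regularity; and (3) uniqueness in the larger class $\MR_{p,\loc}$. Throughout one uses that $\kappa:=\sup_{t\in[0,\tau]}\norm{A(t)}_{\Linop(D,X)}<\infty$ by continuity of $A(\cdot)$.

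For the contraction estimate I would fix $u\in\MR_p$ as above and exploit accretivity in the form $\norm{y+hA(t)y}_X\ge\norm{y}_X$ for $y\in D$, $h>0$. Since $\dot{u}=-A(\cdot)u$, integration gives $u(t-h)=u(t)+\int_{t-h}^t A(s)u(s)\dx[s]$, whence $u(t)+hA(t)u(t)-u(t-h)=\int_{t-h}^t\bigl(A(t)u(t)-A(s)u(s)\bigr)\dx[s]$. Because $A(\cdot)u(\cdot)=-\dot{u}\in L^p(0,\tau;X)$, at almost every $t$ (a Lebesgue point, at which moreover $u(t)\in D$) this difference is $o(h)$ as $h\to0^+$. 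Combined with accretivity, $\norm{u(t)}_X\le\norm{u(t)+hA(t)u(t)}_X\le\norm{u(t-h)}_X+o(h)$, so the left derivative of the absolutely continuous function $t\mapsto\norm{u(t)}_X$ (absolute continuity follows from $u\in W^{1,p}(0,\tau;X)$) is $\le0$ almost everywhere. Hence $\norm{u(\cdot)}_X$ is non-increasing and $\norm{u(t)}_X\le\norm{u(0)}_X=\norm{x}_X$. I expect this translation of the purely algebraic accretivity inequality into monotonicity of the norm — coping with the non-smoothness of $\norm{\cdot}_X$ on a general Banach space via the Lebesgue-point remainder — to be the main obstacle.

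For the weighted estimate, note that $\tilde{u}(t)=tu(t)$ lies in $\MR_p$ and satisfies $\dot{\tilde{u}}+A(\cdot)\tilde{u}=u+t(\dot{u}+A(\cdot)u)=u$ together with $\tilde{u}(0)=0$. Since $[0,\tau]$ is bounded, $u\in C([0,\tau];X)\subset L^p(0,\tau;X)$ with $\norm{u}_{L^p(0,\tau;X)}\le\tau^{1/p}\norm{x}_X$ by the contraction estimate. The solution operator sending $f\in L^p(0,\tau;X)$ to the unique $w\in\MR_p$ with $\dot{w}+A(\cdot)w=f$, $w(0)=0$, is well defined by Theorem~\ref{thm:3a.1} and bounded by the closed graph theorem; writing its norm as $C$, uniqueness identifies $\tilde{u}$ with this $w$ for $f=u$, so $\norm{\tilde{u}}_{W^{1,p}(0,\tau;X)}+\norm{\tilde{u}}_{L^p(0,\tau;D)}\le C\tau^{1/p}\norm{x}_X=:M\norm{x}_X$.

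Finally, for general $x\in X$ pick $x_n\in\tr_p$ with $x_n\to x$ and let $u_n$ be the associated solutions. Linearity and the contraction estimate give $\norm{u_n(t)-u_m(t)}_X\le\norm{x_n-x_m}_X$, so $u_n\to u$ in $C([0,\tau];X)$ with $\norm{u(t)}_X\le\norm{x}_X$; likewise the weighted estimate makes $\tilde{u}_n=tu_n$ Cauchy in $\MR_p$, and its limit must be $\tilde{u}(t)=tu(t)$, so $\tilde{u}\in\MR_p$ with the stated bound. Dividing by $t$ on each $[\delta,\tau]$ (where $1/t$ is bounded and $C^1$) shows $u\in\MR_{p,\loc}$, and passing to the limit in $\dot{\tilde{u}}_n+A(\cdot)\tilde{u}_n=u_n$ yields $\dot{\tilde{u}}+A(\cdot)\tilde{u}=u$, equivalently $\dot{u}+A(\cdot)u=0$ on $(0,\tau]$, with $u(0)=x$. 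Uniqueness in $\MR_{p,\loc}$ then follows from the same contraction argument: the difference $w$ of two solutions satisfies $w(0)=0$, and applying the monotonicity estimate on each $[\delta,\tau]$ gives $\norm{w(t)}_X\le\norm{w(\delta)}_X\to0$ as $\delta\to0^+$, so $w\equiv0$.
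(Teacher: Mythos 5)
The paper does not prove Proposition~\ref{prop:3a.4} at all: it is recalled from~\cite[Corollary~3.4]{ACFP07}, so there is no internal proof to compare against. Your blind argument is correct and self-contained, and it follows the same standard route as the cited source --- solve for data in the dense trace space $\tr_p$ via Theorem~\ref{thm:3a.1}, derive the contraction $\norm{u(t)}_X\le\norm{x}_X$ from accretivity (your Lebesgue-point treatment of the left difference quotient of $t\mapsto\norm{u(t)}_X$ is the right way to handle a non-smooth norm), obtain~\eqref{eq:3a.2} by observing that $\tilde{u}(t)=tu(t)$ solves the inhomogeneous problem with right-hand side $u$ and zero initial value together with closed-graph boundedness of the solution operator, and then pass to general $x\in X$ by density, with uniqueness in $\MR_{p,\loc}$ from the contraction on $[\delta,\tau]$ and $\delta\to0^+$.
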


We define the operator $T(\tau)\in\Linop(X)$ by $T(\tau)x=u(\tau)$, where $u$ satisfies~\eqref{eq:3a.1}, and call $T(\tau)$ the \emphdef{solution operator for $A(\cdot)$}. Thus $\norm{T(\tau)}_{\Linop(X)}\le 1$.
By Theorem~\ref{thm:3a.1}, $T(\tau)\tr_p\subset\tr_p$ and $S(\tau)=\restrict{T(\tau)}{\tr_p}$.
Now we obtain a result for generalized time--boundary conditions where the linear map $\Phi\colon X\to X$ might not leave invariant the trace space $\tr_p$.
We will use the following straightforward rescaling property.
\begin{lem}\label{lem:3a.5}
Let $f\in L^p(0,\tau;X)$ and $u\in\MR_{p,\loc}$ be such that $\dot{u}+A(\cdot)u=f$.
Let $\alpha\in\RR$ and $v(t)=e^{\alpha t} u(t)$.
Then $v\in\MR_{p,\loc}$ and $\dot{v}(t)+(A(t)-\alpha)v(t)=e^{t\alpha} f(t)$ a.e.
The solution operator $T_\alpha(\tau)$ for $A(\cdot)-\alpha$ is $T_\alpha(\tau)=e^{\alpha\tau}T(\tau)$.
Moreover, if $\Phi\colon X\to X$ is linear, then $\Phi u(\tau)=u(0)$ if and only if
$e^{-\alpha\tau}\Phi v(\tau)=v(0)$. Thus $\Phi$ is replaced by $\Phi_\alpha := e^{-\alpha\tau}\Phi$ if we replace $A(\cdot)$ by $A(\cdot)-\alpha$.
\end{lem}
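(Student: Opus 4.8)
The plan is to verify the four assertions in turn, each by a direct computation, exploiting only that $t\mapsto e^{\alpha t}$ is smooth with bounded derivative on the compact interval $[0,\tau]$, the linearity of each $A(t)$, and the linearity of $\Phi$. The whole statement is bookkeeping around the substitution $v=e^{\alpha\cdot}u$, and the inverse substitution $u=e^{-\alpha\cdot}v$.

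First I would establish the membership $v\in\MR_{p,\loc}$. Since $e^{\alpha\cdot}\in C^\infty([0,\tau])$ with $e^{\alpha\cdot}$ and its derivative bounded on $[0,\tau]$, pointwise multiplication by $e^{\alpha\cdot}$ maps each of the three constituent spaces $C([0,\tau];X)$, $W^{1,p}_\loc((0,\tau];X)$ and $L^p_\loc((0,\tau];D)$ boundedly into itself; hence $v=e^{\alpha\cdot}u\in\MR_{p,\loc}$. The vector-valued product rule for a scalar $C^1$-factor then gives $\dot v(t)=\alpha e^{\alpha t}u(t)+e^{\alpha t}\dot u(t)$ for a.e.\ $t$.

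Next I would compute, using linearity of $A(t)$,
\[
\dot v(t)+(A(t)-\alpha)v(t)
=\alpha e^{\alpha t}u(t)+e^{\alpha t}\dot u(t)+e^{\alpha t}A(t)u(t)-\alpha e^{\alpha t}u(t)
=e^{\alpha t}\bigl(\dot u(t)+A(t)u(t)\bigr)=e^{\alpha t}f(t),
\]
which is the second assertion; the two $\alpha$-terms cancel exactly. For the third assertion I would read off the solution operator from this identity with $f=0$. Given $x$, let $u$ solve \eqref{eq:3a.1} via Proposition~\ref{prop:3a.4}, so $T(\tau)x=u(\tau)$. Then $v=e^{\alpha\cdot}u\in\MR_{p,\loc}$ satisfies $\dot v+(A(\cdot)-\alpha)v=0$ and $v(0)=u(0)=x$, whence $T_\alpha(\tau)x=v(\tau)=e^{\alpha\tau}u(\tau)=e^{\alpha\tau}T(\tau)x$. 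The uniqueness needed for $T_\alpha(\tau)$ to be well defined—even though $A(\cdot)-\alpha$ need not itself be accretive—is transferred back through $u=e^{-\alpha\cdot}v$: two solutions for $A(\cdot)-\alpha$ sharing an initial value yield, after this substitution, two solutions of \eqref{eq:3a.1}, which coincide by Proposition~\ref{prop:3a.4}. Thus $T_\alpha(\tau)=e^{\alpha\tau}T(\tau)$.

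Finally, for the boundary-condition equivalence I would use $v(0)=u(0)$ and $v(\tau)=e^{\alpha\tau}u(\tau)$ together with linearity of $\Phi$: the relation $\Phi u(\tau)=u(0)$ reads $\Phi\bigl(e^{-\alpha\tau}v(\tau)\bigr)=v(0)$, i.e.\ $e^{-\alpha\tau}\Phi v(\tau)=v(0)$, which is precisely $\Phi_\alpha v(\tau)=v(0)$. I do not expect any genuine obstacle: the only points requiring a word of care are the preservation of the \emph{local} regularity spaces together with the product rule in the first step, and the transfer of uniqueness in the third step, since $A(\cdot)-\alpha$ is in general no longer accretive.
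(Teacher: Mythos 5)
Your proof is correct and is precisely the routine verification the paper leaves implicit: the lemma is stated there without proof, introduced only as a ``straightforward rescaling property''. The two points you flag explicitly --- the product rule in the local spaces $\MR_{p,\loc}$ and the transfer of uniqueness through $u=e^{-\alpha\cdot}v$, so that $T_\alpha(\tau)$ is well defined even though $A(\cdot)-\alpha$ need not itself be accretive --- are exactly the details the paper's ``straightforward'' glosses over, and you handle them correctly.
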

Thus, if $A(t)-\alpha$ is accretive for all $t\in [0,\tau]$ and $T_\alpha(\tau)\colon X\to X$ is the solution operator of $A(\cdot)-\alpha$, then $\norm{T_\alpha(\tau)}_{\Linop(X)}\le 1$ and therefore $\norm{T(\tau)}_{\Linop(X)}\le e^{-\alpha\tau}$.

\begin{thm}\label{thm:3a.6}
Let $\Phi\colon X\to X$ be linear. Assume that there is an $\alpha\in\RR$ such that $A(t)-\alpha$ is accretive for all $t\in [0,\tau]$.
Assume also that $I-\Phi T(\tau)\colon X\to X$ is bijective.
Then for all $f\in L^p(0,\tau;X)$ there exists a unique
\begin{equation}\label{eq:3a.3}
    \begin{cases}
    \text{$u\in\MR_{p,\loc}$ such that}\\
    \begin{aligned}
        &\dot{u}+A(\cdot)u = f,\\    
        &u(0)=\Phi u(\tau). 
    \end{aligned}
    \end{cases}
\end{equation}
\end{thm}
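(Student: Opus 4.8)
The plan is to reduce to the accretive case and then decompose the solution as a particular solution vanishing at $t=0$ plus a homogeneous solution whose initial value is chosen so as to enforce the time--boundary condition, exactly as in Section~\ref{sec:2}, but with the solution operator $T(\tau)$ on all of $X$ (provided by Proposition~\ref{prop:3a.4}) playing the role of the semigroup.

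First I would invoke Lemma~\ref{lem:3a.5} to reduce to $\alpha=0$. Passing to $v(t)=e^{\alpha t}u(t)$ replaces $A(\cdot)$ by the accretive family $A(\cdot)-\alpha$ and $\Phi$ by $\Phi_\alpha=e^{-\alpha\tau}\Phi$; since $\Phi_\alpha T_\alpha(\tau)=\Phi T(\tau)$, the hypothesis that $I-\Phi T(\tau)$ is bijective on $X$ is preserved, and solutions of the rescaled problem correspond bijectively to solutions of the original one. Hence I may assume that $A(t)$ is accretive for all $t$, so that Proposition~\ref{prop:3a.4} applies and $\norm{T(\tau)}_{\Linop(X)}\le 1$.

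For existence, given $f\in L^p(0,\tau;X)$, I would use Theorem~\ref{thm:3a.1} with initial value $0\in\tr_p$ to obtain $w\in\MR_p$ with $\dot w + A(\cdot)w=f$ and $w(0)=0$, so that $w(\tau)\in\tr_p\subset X$. Since $I-\Phi T(\tau)$ is bijective on $X$, I set $x:=(I-\Phi T(\tau))^{-1}\Phi w(\tau)\in X$; note that $x$ need not lie in $\tr_p$, which is exactly why one must work in $\MR_{p,\loc}$ and not in $\MR_p$. By Proposition~\ref{prop:3a.4} there is $v\in\MR_{p,\loc}$ with $\dot v + A(\cdot)v=0$ and $v(0)=x$, and then $v(\tau)=T(\tau)x$ by definition of $T(\tau)$. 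Putting $u:=v+w\in\MR_{p,\loc}$ gives $\dot u + A(\cdot)u=f$, and the choice of $x$ yields $(I-\Phi T(\tau))x=\Phi w(\tau)$, i.e.\ $u(0)=x=\Phi T(\tau)x+\Phi w(\tau)=\Phi\bigl(v(\tau)+w(\tau)\bigr)=\Phi u(\tau)$. For uniqueness, the difference $u$ of two solutions lies in $\MR_{p,\loc}$ and satisfies $\dot u+A(\cdot)u=0$ with $u(0)=\Phi u(\tau)$; the uniqueness in Proposition~\ref{prop:3a.4} gives $u(\tau)=T(\tau)u(0)$, whence $(I-\Phi T(\tau))u(0)=0$, and injectivity of $I-\Phi T(\tau)$ forces $u(0)=0$, so $u=0$ by Proposition~\ref{prop:3a.4} again.

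I do not expect a serious obstacle, since the argument is the same splitting used for semigroups in Section~\ref{sec:2}. The only point requiring genuine care is that the chosen initial value $x$ typically lies in $X\setminus\tr_p$: this is what prevents one from staying in $\MR_p$ and forces the use of the local maximal regularity space $\MR_{p,\loc}$ together with the bijectivity of $I-\Phi T(\tau)$ on all of $X$ rather than merely on $\tr_p$.
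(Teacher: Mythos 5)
Your proof is correct and takes essentially the same route as the paper: the same reduction to $\alpha=0$ via Lemma~\ref{lem:3a.5}, the same decomposition $u=w+v$ with $w\in\MR_p$ supplied by Theorem~\ref{thm:3a.1} and $v\in\MR_{p,\loc}$ supplied by Proposition~\ref{prop:3a.4}, and the same use of injectivity of $I-\Phi T(\tau)$ (via uniqueness in Proposition~\ref{prop:3a.4}) for uniqueness. Your explicit verification that $\Phi_\alpha T_\alpha(\tau)=\Phi T(\tau)$, so that bijectivity survives the rescaling, is a detail the paper leaves implicit.
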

\begin{proof}
Lemma~\ref{lem:3a.5} allows us to assume that $\alpha=0$.
\begin{parenum}
\item Let $u\in\MR_{p,\loc}$ be such that $\dot{u}+A(\cdot)u=0$. Let $x:=u(0)$. Then $u(0)=\Phi u(\tau)$ if and only if $x-\Phi T(\tau)x=0$. In that case, by Theorem~\ref{thm:3a.1}, $x=0$ if and only if $u=0$.
We have shown that uniqueness in~\eqref{eq:3a.3} is equivalent to $I-\Phi T(\tau)$ being injective.

\item Let $f\in L^p(0,\tau;X)$. By Theorem~\ref{thm:3a.1} there exists a unique $w\in\MR_p$ such that $\dot{w}+A(\cdot)w=f$, $w(0)=0$. By assumption, there exists $x\in X$ such that $x-\Phi T(\tau)x=\Phi w(\tau)$.
By Proposition~\ref{prop:3a.4} there exists $v\in\MR_{p,\loc}$ such that $\dot{v}+A(\cdot)v=0$, $v(0)=x$. Then $v(\tau)=T(\tau)x$. Let $u=w+v$. Then $u$ is a solution of~\eqref{eq:3a.3}.\qedhere
\end{parenum}
\end{proof}

\begin{cor}\label{cor:3a.7}
Let $\alpha\in\RR$ and assume that $A(t)-\alpha$ is accretive for all $t\in [0,\tau]$.
Let $\Phi\in\Linop(X)$ be such that $\norm{\Phi}_{\Linop(X)}<e^{\alpha\tau}$.
Then for all $f\in L^p(0,\tau;X)$ there exists a unique $u\in\MR_{p,\loc}$ such that
\[
    \left\{\begin{aligned}
        &\dot{u} + A(\cdot)u = f,\\
        &u(0) = \Phi u(\tau).
    \end{aligned}\right.
\]
\end{cor}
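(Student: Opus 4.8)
The plan is to deduce this directly from Theorem~\ref{thm:3a.6}. That theorem already grants existence and uniqueness under two hypotheses: that $A(t)-\alpha$ is accretive for all $t\in[0,\tau]$ (which is assumed here verbatim), and that $I-\Phi T(\tau)\colon X\to X$ is bijective. So the entire task reduces to verifying this bijectivity from the quantitative smallness assumption $\norm{\Phi}_{\Linop(X)}<e^{\alpha\tau}$.

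First I would extract the operator-norm bound on the solution operator. By the remark immediately following Lemma~\ref{lem:3a.5}, accretivity of $A(t)-\alpha$ for all $t$ forces $\norm{T_\alpha(\tau)}_{\Linop(X)}\le 1$ for the solution operator of $A(\cdot)-\alpha$, and since $T_\alpha(\tau)=e^{\alpha\tau}T(\tau)$ this yields $\norm{T(\tau)}_{\Linop(X)}\le e^{-\alpha\tau}$. Multiplying by the bound on $\Phi$ then gives
\[
    \norm{\Phi T(\tau)}_{\Linop(X)}\le\norm{\Phi}_{\Linop(X)}\,\norm{T(\tau)}_{\Linop(X)}<e^{\alpha\tau}\cdot e^{-\alpha\tau}=1.
\]

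With $\norm{\Phi T(\tau)}_{\Linop(X)}<1$ in hand, I would invoke the Neumann series: $I-\Phi T(\tau)$ is invertible in $\Linop(X)$, with inverse $\sum_{n\ge 0}(\Phi T(\tau))^n$, and in particular it is bijective on $X$. This supplies the missing hypothesis of Theorem~\ref{thm:3a.6}, whose conclusion is exactly the existence and uniqueness of $u\in\MR_{p,\loc}$ solving $\dot{u}+A(\cdot)u=f$ with $u(0)=\Phi u(\tau)$. I anticipate no real obstacle here: the single nontrivial ingredient is the contraction estimate $\norm{T(\tau)}_{\Linop(X)}\le e^{-\alpha\tau}$, which is precisely where accretivity enters and which has already been established; the remainder is the Neumann series together with the cited theorem. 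The corollary is thus best read as a clean, checkable sufficient condition guaranteeing the abstract invertibility required by Theorem~\ref{thm:3a.6}.
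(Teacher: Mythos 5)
Your proposal is correct and follows essentially the same route as the paper: the paper's proof likewise uses the contraction estimate $\norm{T(\tau)}_{\Linop(X)}\le e^{-\alpha\tau}$ (obtained from accretivity via the rescaling in Lemma~\ref{lem:3a.5}) to conclude $\norm{\Phi T(\tau)}_{\Linop(X)}<1$, hence invertibility of $I-\Phi T(\tau)$, and then invokes Theorem~\ref{thm:3a.6}. Your only addition is spelling out the Neumann series, which the paper leaves implicit.
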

\begin{proof}
Since $\norm{T(\tau)}_{\Linop(H)}\le e^{-\alpha\tau}$, one has $\norm{\Phi T(\tau)}_{\Linop(H)}<1$ and $I-\Phi T(\tau)$ is invertible.
\end{proof}

We now make an additional compactness assumption, which is verified in many examples.
First we establish the following version of the Aubin--Lions lemma (see~\cite[Exercise~19.3]{AVV25} for the case of Hilbert spaces).
In the original version of Aubin and of Lions, an embedding in $L^p(a,b;X)$ is considered.

\begin{lem}\label{lem:3a.8}
Assume that the embedding $D\embed X$ is compact.
Let $-\infty<a<b<\infty$. Then the embedding
\[
    \MR_p(a,b):= W^{1,p}(a,b;X)\cap L^p(a,b;D)\embed C([a,b];X)
\]
is compact for any $1<p<\infty$.
\begin{proof}
We choose on $\MR_p(a,b)$ the norm 
\[
    \norm{v}_{\MR_p} := \norm{v}_{W^{1,p}(a,b;X)} + \norm{v}_{L^p(a,b;D)}.
\]
\begin{parenum}
\item Let $v\in W^{1,p}(a,b;X)$ be such that $\norm{v}_{\MR_p}\le 1$. Then
\[
    \norm{v(t)-v(t_0)}_X = \norm{\int_t^{t_0} \dot{v}(s)\dx[s]}_X \le \abs{t-t_0}^{\frac{1}{p'}}\Bigl(\int_a^b \norm{\dot{v}(s)}_X^p\dx[s]\Bigr)^{\frac{1}{p}}\le \abs{t-t_0}^{\frac{1}{p'}}.
\]
This shows that the set
\[
    K := \{v\in\MR_p(a,b): \norm{v}_{\MR_p}\le 1\}\subset C([a,b];X)
\]
is equicontinuous at any $t_0\in[a,b]$.

\item Let $t\in(a,b]$. We show that the set $K(t) := \{v(t) : v\in K\}$ is precompact in $X$.
Let $\eps>0$. Choose $h>0$ so small that $t-h\ge a$ and $(\frac{1}{p'+1} h)^{\frac{1}{p'}} <\eps$. 
If $v\in K$, then
\[
    v(t) = \frac{1}{h}\int_{t-h}^t \frac{\mathdcl{d}}{\mathdcl{d}s}\bigl((s-(t-h))v(s)\bigr)\dx[s] = \frac{1}{h}\int_{t-h}^t\bigl(s-(t-h)\bigr)v'(s)\dx[s] + \frac{1}{h}\int_{t-h}^t v(s)\dx[s].
\]
Then
\[
    \norm{\frac{1}{h}\int_{t-h}^t\bigl(s-(t-h)\bigr)v'(s)\dx[s]}\le \frac{1}{h}\Bigl(\int_{t-h}^t\bigl(s-(t-h)\bigr)^{p'}\dx[s]\Bigr)^{\frac{1}{p'}}\norm{v'}_{L^p(a,b;X)}\le\eps.
\]
Since $\norm{v}_{L^p(a,b;D)}\le 1$, the set $\{\frac{1}{h}\int_{t-h}^t v(s)\dx[s] : v\in K\}$ is bounded in $D$, and so it is precompact in $X$. It follows that the set $K(t)$ can be covered by finitely many $\eps$-balls in $X$. The proof for $t=a$ is similar.

\item Now it follows from the Arzelà--Ascoli theorem that the unit ball of $\MR_p(a,b)$ is precompact in $C([a,b];X)$.\qedhere
\end{parenum}
\end{proof}
\end{lem}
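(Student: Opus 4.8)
The plan is to deduce the compactness of the embedding from the Arzelà--Ascoli theorem applied to the unit ball
\[
    K := \{v\in\MR_p(a,b) : \norm{v}_{\MR_p}\le 1\},
\]
equipped with the norm $\norm{v}_{\MR_p} := \norm{v}_{W^{1,p}(a,b;X)} + \norm{v}_{L^p(a,b;D)}$ and viewed inside $C([a,b];X)$. Concretely I would verify the two hypotheses of Arzelà--Ascoli: first, that $K$ is equicontinuous as a family of $X$-valued functions; and second, that for each fixed $t\in[a,b]$ the set of point values $K(t):=\{v(t):v\in K\}$ is precompact in $X$. The compactness of the embedding $D\embed X$ will be used only in the second of these.

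Equicontinuity I expect to be routine, coming purely from the $W^{1,p}$-control. For $v\in K$ and $t,t_0\in[a,b]$, writing $v(t)-v(t_0)=\int_{t_0}^t\dot v(s)\dx[s]$ and applying Hölder's inequality yields
\[
    \norm{v(t)-v(t_0)}_X \le \abs{t-t_0}^{\frac{1}{p'}}\norm{\dot v}_{L^p(a,b;X)} \le \abs{t-t_0}^{\frac{1}{p'}},
\]
so every member of $K$ is uniformly Hölder continuous with exponent $\frac{1}{p'}$, whence $K$ is equicontinuous.

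The substantive step, and the one I expect to be the main obstacle, is the precompactness of $K(t)$. Here I would fix $\eps>0$ and, for a parameter $h>0$ with $t-h\ge a$, split $v(t)$ by the elementary identity
\[
    v(t) = \frac{1}{h}\int_{t-h}^t \bigl(s-(t-h)\bigr)\dot v(s)\dx[s] + \frac{1}{h}\int_{t-h}^t v(s)\dx[s],
\]
which one obtains from $h\,v(t)=\int_{t-h}^t\frac{\mathdcl{d}}{\mathdcl{d}s}\bigl[(s-(t-h))v(s)\bigr]\dx[s]$ by expanding the derivative (at the left endpoint $t=a$ one averages over $[a,a+h]$ by the symmetric identity). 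The first summand is an error term: a further Hölder estimate bounds its $X$-norm by $\bigl(\frac{h}{p'+1}\bigr)^{1/p'}$ uniformly over $v\in K$, and this is below $\eps$ once $h$ is chosen small. The second summand, the mean $\frac{1}{h}\int_{t-h}^t v(s)\dx[s]$, has $D$-norm at most $h^{-1/p}$ for every $v\in K$ (again by Hölder, using $\norm{v}_{L^p(a,b;D)}\le1$), so it ranges over a bounded subset of $D$, which is precompact in $X$ precisely because $D\embed X$ is compact. Covering that precompact set by finitely many $\eps$-balls and absorbing the uniformly small error then exhibits $K(t)$ as totally bounded in $X$. The whole difficulty is concentrated in this weighting trick inside the averaging integral, which is what allows one to simultaneously produce a $D$-bounded piece — the only place compactness of the embedding is available — while keeping the remainder uniformly controllable. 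With both Arzelà--Ascoli hypotheses established, $K$ is relatively compact in $C([a,b];X)$, which is exactly the claimed compactness of $\MR_p(a,b)\embed C([a,b];X)$.
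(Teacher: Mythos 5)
Your proposal is correct and follows essentially the same route as the paper's proof: Arzel\`a--Ascoli on the unit ball, equicontinuity via H\"older's inequality from the $W^{1,p}$ bound, and precompactness of the point values via the identical weighted averaging identity $h\,v(t)=\int_{t-h}^t \frac{\mathrm{d}}{\mathrm{d}s}\bigl[(s-(t-h))v(s)\bigr]\,\mathrm{d}s$, splitting into a uniformly small derivative term and a $D$-bounded mean, with the compact embedding $D\hookrightarrow X$ used only there. The quantitative bounds, including the error estimate $\bigl(\tfrac{h}{p'+1}\bigr)^{1/p'}$ and the endpoint treatment at $t=a$, match the paper's argument.
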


\begin{rem}
Even though $\MR_p\subset C([0,\tau];\tr_p)$, this latter embedding is not compact unless $\tr_p$ is finite-dimensional.
In fact, by Theorem~\ref{thm:3a.1} the closed subspace $\{u\in\MR_p : \dot{u} + A(\cdot)u=0\}$
of $\MR_p$ is isomorphic to $\tr_p$.
\end{rem}

\begin{prop}\label{prop:3a.10}
Assume that the embedding $D\embed X$ is compact. Then the operator $T(\tau)\colon X\to X$ is compact.
\end{prop}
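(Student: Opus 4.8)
The plan is to combine the smoothing estimate~\eqref{eq:3a.2} from Proposition~\ref{prop:3a.4} with the Aubin--Lions-type compactness of Lemma~\ref{lem:3a.8}. Since $T(\tau)$ is defined via the solution theory of Proposition~\ref{prop:3a.4}, which requires accretivity, and since by Lemma~\ref{lem:3a.5} the solution operator of $A(\cdot)-\alpha$ equals $e^{\alpha\tau}T(\tau)$—a scalar multiple of $T(\tau)$, hence compact if and only if $T(\tau)$ is—I may assume without loss of generality that $A(t)$ is accretive for all $t\in[0,\tau]$, so that Proposition~\ref{prop:3a.4} applies directly.

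To prove compactness I would take an arbitrary sequence $(x_n)$ in $X$ with $\norm{x_n}_X\le 1$ and produce a subsequence along which $(T(\tau)x_n)=(u_n(\tau))$ converges in $X$, where $u_n\in\MR_{p,\loc}$ is the unique solution of $\dot u_n+A(\cdot)u_n=0$, $u_n(0)=x_n$. The obstacle is that each $u_n$ lies only in $\MR_{p,\loc}$ and need not be bounded in $\MR_p$ near $t=0$, so Lemma~\ref{lem:3a.8} cannot be applied on all of $[0,\tau]$. The remedy is the rescaled function $\tilde u_n(t):=t\,u_n(t)$, which by Proposition~\ref{prop:3a.4} lies in $\MR_p$ with $\norm{\tilde u_n}_{W^{1,p}(0,\tau;X)}+\norm{\tilde u_n}_{L^p(0,\tau;D)}\le M\norm{x_n}_X\le M$ by~\eqref{eq:3a.2}.

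Next I would fix any $a\in(0,\tau)$ and observe that on $[a,\tau]$ one has $u_n(t)=t^{-1}\tilde u_n(t)$. Since $t\mapsto t^{-1}$ and its derivative are bounded on $[a,\tau]$, multiplication by $t^{-1}$ maps $\MR_p(a,\tau)$ boundedly into itself; explicitly $\dot u_n=-t^{-2}\tilde u_n+t^{-1}\dot{\tilde u}_n$, so $(u_n)$ is bounded in $\MR_p(a,\tau)$ by some $C_a M$. By Lemma~\ref{lem:3a.8} the embedding $\MR_p(a,\tau)\embed C([a,\tau];X)$ is compact, so a subsequence of $(u_n|_{[a,\tau]})$ converges in $C([a,\tau];X)$. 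Evaluating this subsequence at $t=\tau$ yields a convergent subsequence of $(u_n(\tau))=(T(\tau)x_n)$ in $X$.

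Since every bounded sequence in $X$ thus has an image under $T(\tau)$ with a convergent subsequence, $T(\tau)$ is compact. The only genuinely delicate point is the passage around the singularity at $t=0$, which is handled by the substitution $\tilde u_n=t\,u_n$ and the restriction to $[a,\tau]$ with $a>0$; everything else is a routine use of the quantitative bound~\eqref{eq:3a.2} and the Aubin--Lions lemma.
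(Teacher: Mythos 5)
Your proposal is correct and follows essentially the same route as the paper: the paper's proof likewise uses the bound~\eqref{eq:3a.2} on $\tilde u_n(t)=t\,u_n(t)$ to deduce $\norm{u_n}_{\MR_p(\eps,\tau)}\le M/\eps$ on an interval $[\eps,\tau]$ away from $t=0$, and then applies the compact embedding of Lemma~\ref{lem:3a.8} and evaluates at $t=\tau$. Your explicit computation $\dot u_n=-t^{-2}\tilde u_n+t^{-1}\dot{\tilde u}_n$ and the reduction to $\alpha=0$ via Lemma~\ref{lem:3a.5} merely spell out details the paper leaves implicit.
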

\begin{proof}
Let $0<\eps<\tau$. By Lemma~\ref{lem:3a.8} the embedding $\MR_p(\eps,\tau)\embed C([\eps,\tau];X)$ is compact.
Let $x_n\in X$ be such that $\norm{x_n}_X\le 1$. Let $v_n\in\MR_{p,\loc}$ be such that $\dot{v}_n+A(\cdot)v_n = 0$, $v(0)=x_n$. Then by~\eqref{eq:3a.2} one has $\norm{v_n}_{\MR_p(\eps,\tau)}\le\frac{M}{\eps}$ for all $n\in\NN$.
By Lemma~\ref{lem:3a.8} there exists a subsequence such that $v_{n_k}$ converges in $C([\eps,\tau];X)$ as $k\to\infty$. In particular, $S(\tau)x_{n_k} = v_{n_k}(\tau)$ converges as $k\to\infty$.
\end{proof}

Now we can use the Fredholm alternative and obtain the following characterization.
\begin{thm}
Assume that the embedding $D\embed V$ is compact.
Furthermore, assume that there exists $\alpha\in\RR$ such that $A(t)-\alpha$ is accretive for all $t\in [0,\tau]$. Let $\Phi\in\Linop(X)$.
Then the following statements are equivalent.
\begin{romanenum}
\item $\Phi T(\tau)x=x$ implies $x=0$, for all $x\in X$.
\item For all $f\in L^p(0,\tau;X)$ there exists a unique $u\in\MR_{p,\loc}$ such that
\[
    \left\{\begin{aligned}
        &\dot{u}+A(\cdot)u = f,\\
        &u(0)=\Phi u(\tau).
    \end{aligned}\right.
\]
\end{romanenum}
\end{thm}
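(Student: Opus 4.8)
The plan is to reduce the equivalence to the Fredholm alternative for the compact operator $\Phi T(\tau)$. The one nontrivial input — compactness of the solution operator — is already available from Proposition~\ref{prop:3a.10}, so the argument is short once the bookkeeping with the accretivity shift is settled.

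First I would record that both conditions are invariant under the rescaling of Lemma~\ref{lem:3a.5}. Replacing $A(\cdot)$ by $A(\cdot)-\alpha$ turns $T(\tau)$ into $T_\alpha(\tau)=e^{\alpha\tau}T(\tau)$ and $\Phi$ into $\Phi_\alpha=e^{-\alpha\tau}\Phi$; since $\Phi_\alpha T_\alpha(\tau)=\Phi T(\tau)$, condition~(i) is literally unchanged, and the substitution $v(t)=e^{\alpha t}u(t)$ together with the new right-hand side $e^{\alpha t}f(t)$ sets up a bijective correspondence between the problems in~(ii). Hence I may assume $\alpha=0$, so that $A(t)$ is accretive for every $t$. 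Proposition~\ref{prop:3a.10} (which rests on the Aubin--Lions-type Lemma~\ref{lem:3a.8} and the smoothing estimate~\eqref{eq:3a.2}) then shows that $T(\tau)\in\Linop(X)$ is compact, and consequently $\Phi T(\tau)$ is compact as the composition of a bounded and a compact operator.

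For the implication (i)$\Rightarrow$(ii), I observe that condition~(i) says exactly that $I-\Phi T(\tau)\colon X\to X$ is injective. Since $\Phi T(\tau)$ is compact, the Fredholm alternative upgrades injectivity to bijectivity of $I-\Phi T(\tau)$, and then Theorem~\ref{thm:3a.6} yields existence and uniqueness of the solution in~(ii). For the converse (ii)$\Rightarrow$(i), I would use only the uniqueness half of~(ii): given $x$ with $\Phi T(\tau)x=x$, Proposition~\ref{prop:3a.4} furnishes $v\in\MR_{p,\loc}$ with $\dot v+A(\cdot)v=0$, $v(0)=x$ and $v(\tau)=T(\tau)x$; then $v(0)=x=\Phi T(\tau)x=\Phi v(\tau)$, so $v$ solves the homogeneous problem in~(ii). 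As $u\equiv 0$ is another solution, uniqueness forces $v=0$ and hence $x=v(0)=0$, which is~(i).

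The genuine difficulty has already been discharged in Proposition~\ref{prop:3a.10}: without compactness of $T(\tau)$ one only knows, via Theorem~\ref{thm:3a.6}, that bijectivity of $I-\Phi T(\tau)$ is equivalent to well-posedness, and injectivity alone would not suffice to produce existence. The only point demanding care here is to check that the accretivity shift disturbs neither condition, so that Proposition~\ref{prop:3a.10} may legitimately be invoked in the unscaled problem; after that the Fredholm alternative converts injectivity into bijectivity and the rest is routine via Theorem~\ref{thm:3a.6}.
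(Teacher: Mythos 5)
Your proposal is correct and follows essentially the same route as the paper's own proof: reduce to $\alpha=0$ via Lemma~\ref{lem:3a.5}, use Proposition~\ref{prop:3a.10} and the Fredholm alternative to upgrade injectivity of $I-\Phi T(\tau)$ to bijectivity, conclude (ii) from Theorem~\ref{thm:3a.6}, and derive (i) from uniqueness via the homogeneous solution of Proposition~\ref{prop:3a.4}. Your explicit verification that both conditions are invariant under the rescaling (in particular $\Phi_\alpha T_\alpha(\tau)=\Phi T(\tau)$) is a detail the paper leaves implicit, and it is exactly the right check to make before invoking Proposition~\ref{prop:3a.10}, which rests on the accretive-case estimate~\eqref{eq:3a.2}.
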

\begin{proof}
We may assume that $\alpha=0$, see Lemma~\ref{lem:3a.5}.
\begin{parenum}
\item[(i)$\Rightarrow$(ii).] 
Since $\Phi T(\tau)$ is compact by Proposition~\ref{prop:3a.10}, it follows that $I-\Phi T(\tau)\in\Linop(X)$ is invertible. So~(ii) follows from Theorem~\ref{thm:3a.6}.

\item[(ii)$\Rightarrow$(i).]
Let $x\in X$ be such that $\Phi T(\tau)x=x$. Let $v\in\MR_{p,\loc}$ be such that $\dot{v}+A(\cdot)v=0$ and $v(0)=x$. Then $T(\tau)x=v(\tau)$ and so $\Phi v(\tau)=v(0)$.
Uniqueness in~(ii) implies that $v=0$. Hence $x=0$.\qedhere
\end{parenum}
\end{proof}

\begin{example}
Let $\KK=\RR$ and $\Omega\subset\RR^d$ be open and bounded of class $C^2$.
Assume that $a_{lk}=a_{kl}\in C([0,\tau]\times\clos{\Omega})$ for all $k,l=1,\dots,d$ and suppose that there exists a $\beta>0$ such that
\[
    \sum_{k,l=1}^d a_{kl}(t,x)\xi_l\xi_k\ge \beta\abs{\xi}^2
\]
for all $\xi\in\RR^d$, $t\in [0,\tau]$, $x\in\clos{\Omega}$.
Moreover, assume that $a_{kl}(t,\cdot)\in W^{1,\infty}(\Omega)$ for every $t\in [0,\tau]$ and $\partial_k a_{kl}\in L^\infty((0,\tau)\times\Omega)$ for all $k,l=1,\dots,d$.
Let $1<q<\infty$, $X=L^q(\Omega)$, $D=W^{2,q}(\Omega)\cap W^{1,q}_0(\Omega)$.
Define $A(t)\in\Linop(D,X)$ by 
\[
    A(t)v = -\sum_{k,l=1}^d \partial_k\bigl(a_{kl}(t,\cdot)\partial_l v\bigr).
\]
Then $A\colon [0,\tau]\to\Linop(D,X)$ is continuous (see~\cite[Section~4]{ACFP07}).
Denote by $\lambda_1>0$ the first eigenvalue of $-\Delta_\mathdcl{D}$ (where $\Delta_\mathdcl{D}$ is the Dirichlet Laplacian).
Let
\[
    \alpha_q = \begin{cases}
        \beta\lambda_1\frac{2}{q} & \text{if $q\ge 2$, and} \\
        \beta\lambda_1\frac{2}{q'} & \text{if $q\le 2$.}
    \end{cases}
\]
Then $A(t)-\alpha_q$ is accretive for all $t\in [0,\tau]$.
\begin{proof}
By the Poincaré inequality,
\[
    \lambda_1\norm{v}_{L^2(\Omega)}^2\le \int_\Omega \abs{\nabla v}^2
\]
for all $v\in H^2(\Omega)\cap H^1_0(\Omega)$.

Fix $t\in [0,\tau]$ and denote by $(T_q(s))_{s\ge 0}$ the $C_0$-semigroup generated by $-A(t)$ on $X=L^q(\Omega)$.
If $q=2$, then
\[
    \scalar{A(t)v}{v}_{L^2(\Omega)} \ge\beta\int_\Omega\abs{\nabla v}^2\ge \lambda_1\beta\norm{v}_{L^2(\Omega)}^2.
\]
So $A(t)-\lambda_1\beta$ is accretive for the choice $q=2$.

By the Beurling--Deny criterion there exist contractive operators $T_1(s)\in\Linop(L^1(\Omega))$ and $T_\infty(s)\in\Linop(L^\infty(\Omega))$ which are consistent with $T_2(s)$.

Now the Riesz--Thorin theorem implies that for $q\in(1,\infty)$ one has $\norm{T_q(s)}\le e^{-\alpha_q s}$ for all $s\ge 0$.
Thus $A(t)-\alpha_q$ is accretive in $X=L^q(\Omega)$.
\end{proof}

Let $\Phi\in\Linop(L^q(\Omega))$ be such that
$\norm{\Phi}_{\Linop(L^q(\Omega))}<e^{\alpha_q\tau}$.
Thus we obtain from Corollary~\ref{cor:3a.7} the following result.
Let $1<p<\infty$.
For all $f\in L^p(0,\tau;L^q(\Omega))$ there exists a unique $u\in W^{1,p}(0,\tau;L^q(\Omega))\cap L^p(0,\tau;W^{2,q}(\Omega))$ such that $u(t,\cdot)\in W^{1,q}_0(\Omega)$ for all $t\in (0,\tau]$ and
\[
    \left\{\begin{aligned}
        &\dot{u} + A(\cdot)u = f,\\
        &u(0) = \Phi u(\tau).    
    \end{aligned}\right.
\]
In particular, we deduce that the periodic ($\Phi=I$) and the antiperiodic ($\Phi=-I$) problem are well-posed.
\end{example}

\begin{rem}
The results of this section remain true if $A(\cdot)$ is merely strongly measurable and relatively continuous and $A(t)\in \cMR(X)$ for all $t\in [0,\tau]$, 
see~\cite{ACFP07}.

We mention that the Fredholm alternative is also used in~\cite{AR09}, where periodic non-autonomous problems are studied for maximal $L^p$-regularity.
\end{rem}

\section{Form methods: the autonomous case}\label{sec:3}

Let $H,V$ be Hilbert spaces over $\KK=\RR$ or $\CC$ be such that $V\dembed H$ (i.e., $V$ is a dense subspace of $H$ and there exists a $c_H>0$ such that $c_H\norm{v}_H\le\norm{v}_V$ for all $v\in V$). Then we identify $H$ with a dense subspace of $V'$ in the usual way ($x\in H$ is identified with the functional $v\mapsto \scalar{x}{v}_H$ in  $V'$). So we have the Gelfand triple $V\dembed H\dembed V'$.
Let $\form{a}\colon V\times V \to\KK$ be a continuous sesquilinear form that is \emphdef{quasi-coercive} in $H$; i.e., there exist $\omega\in\RR$, $\alpha>0$ such that
\[
    \Re\form{a}(v,v) + \omega\norm{v}_H^2\ge\alpha\norm{v}_V^2
\]
for all $v\in V$. We say that $\form{a}$ is \emphdef{coercive} if we can choose $\omega=0$.
Let $\cA\in\Linop(V,V')$ be defined by $\langle\cA v,w\rangle_{V',V} = \form{a}(v,w)$.
We call $\cA$ the \emphdef{operator on $V'$ associated with $\form{a}$}.
Then $-\cA$ generates a holomorphic $C_0$-semigroup on $V'$. 
The part $A$ of $\cA$ in $H$ is defined by
\begin{align*}
    D(A) &:= \{v\in V: \cA v\in H\},\\
    Av &:= \cA v.
\end{align*}
The semigroup generated by $-\cA$ leaves invariant the space $H$ and its restriction to $H$ is a holomorphic $C_0$-semigroup $(T(t))_{t\ge 0}$ whose generator is $-A$.

Frequently, the operator $A$ is the object of interest since it incorporates the boundary conditions. 
We call $A$ the \emphdef{operator on $H$ associated with $\form{a}$}.

\begin{example}
Let $\KK=\RR$, $V=H^1(a,b)$ and $H=L^2(a,b)$ with $-\infty<a<b<\infty$,
and define $\form{a}\colon V\times V \to\RR$ by
\[
    \form{a}(u,v) = \int_a^b u'v'.
\]
Then $D(A) = \{u\in H^2(a,b): u'(a)=u'(b)=0\}$, $Au=-u''$.
So $A$ is the negative Laplacian with Neumann boundary conditions on $(a,b)$.
\end{example}

Since $-A$ generates a holomorphic semigroup $T$, we know that $T*f\in H^1(0,\tau;H)$ for all $f\in L^2(0,\tau;H)$. Moreover, $w:=T*f$ is the unique solution $w\in H^1(0,\tau;H)\cap L^2(0,\tau;D(A))$ of
\[
    \left\{\begin{aligned}
        &\dot{w} + Aw = f, \\
        &w(0) = 0.
    \end{aligned}\right.
\]
If we want to study the problem where we look for $v\in H^1(0,\tau;H)\cap L^2(0,\tau;D(A))$ such that
\[
    \left\{\begin{aligned}
        &\dot{v}+Av = 0, \\
        &v(0) = x,
    \end{aligned}\right.
\]
then a solution exists if and only if $x\in D((A+\omega)^{1/2})$. In fact, one has
\begin{align*}
    \tr_2 &:= \{w(0) : w\in H^1(0,\tau;H)\cap L^2(0,\tau;D(A))\} \\
        &= (H,D(A))_{\frac{1}{2},2} \\
        &= [H,D(A)]_{\frac{1}{2}} \\
        &= D((A+\omega)^{1/2}).
\end{align*}
The previous to last identity between the real and the complex interpolation space holds since we are in Hilbert space. The last identity holds since the operator $A+\omega$ has the property BIP.
\begin{conclusion*}
$D((A+\omega)^{1/2})$ is the right space of initial values for which maximal regularity holds.
\end{conclusion*}

Thus it is an important task to identify this space. A counterexample of McIntosh~\cite{McI72} shows that $V\ne D((A+\omega)^{1/2})$ can occur. However, for elliptic operators with various boundary conditions the identity
\[
    D((A+\omega)^{1/2}) = V
\]
is known. For $\Omega=\RR^d$ this is the famous Kato conjecture, which was solved by Auscher, Hofmann, Lacey, McIntosh and Tchamitchian~\cite{AHLMcIT02}.
We refer to~\cite{EHMT23} for a beautiful introduction and further information.
Here we mention a very simple special case.

\begin{prop}
Assume that $\form{a} = \form{a}_1 + \form{a}_2$ with sesquilinear forms $\form{a}_1, \form{a}_2 \colon V\times V \to\KK$, where $\form{a}_1$ is continuous, symmetric and quasi-coercive in $H$ and $\form{a}_2$ satisfies
\[
    \abs{\form{a}_2(v,w)}\le M_2\norm{v}_V\norm{w}_H\quad\text{for all $v,w\in V$}.
\]
Then $\form{a}$ is quasi-coercive in $H$ and
\[
    D((A+\omega)^{1/2}) = V.
\]
\end{prop}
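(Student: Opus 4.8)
The plan is to prove the two assertions separately, the coercivity estimate being routine and the identification of the square-root domain being the substantial point. For coercivity, let $\omega_1,\alpha_1$ be the quasi-coercivity constants of $\form{a}_1$. Young's inequality gives $M_2\norm{v}_V\norm{v}_H\le\frac{\alpha_1}{2}\norm{v}_V^2+\frac{M_2^2}{2\alpha_1}\norm{v}_H^2$, so $\Re\form{a}(v,v)+\omega\norm{v}_H^2\ge\frac{\alpha_1}{2}\norm{v}_V^2$ with $\omega:=\omega_1+\frac{M_2^2}{2\alpha_1}$. Replacing $\form{a}$ by $\form{a}+\omega\scalar{\cdot}{\cdot}_H$ and $\form{a}_1$ by $\form{a}_1+\omega\scalar{\cdot}{\cdot}_H$ (thus $A$ by $A+\omega$), we may assume $\omega=0$, so $\form{a}$ and $\form{a}_1$ are coercive. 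Let $B$ be the operator on $H$ associated with the symmetric coercive form $\form{a}_1$; then $B$ is self-adjoint, positive and invertible, and by Kato's representation theorem $D(B^{1/2})=V$ with $\norm{B^{1/2}v}_H\approx\norm{v}_V$. The hypothesis on $\form{a}_2$ says $w\mapsto\form{a}_2(v,w)$ is $H$-continuous, so $\cA_2 V\subseteq H$ and $\form{a}_2(v,w)=\scalar{Pv}{w}_H$ for $P:=\cA_2\in\Linop(V,H)$; equivalently $PB^{-1/2}\in\Linop(H)$. Since $\cA_2 v\in H$ for all $v\in V$, one has $D(A)=D(B)$ and $A=B+P$, and $A$ is invertible and sectorial. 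It remains to prove $D(A^{1/2})=D(B^{1/2})=V$.

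I will deduce this from the boundedness of $A^{1/2}B^{-1/2}$ and of $B^{1/2}A^{-1/2}$ on $H$: since $B^{-1/2}$ maps $H$ onto $V$ and $A^{-1/2}$ maps $H$ onto $D(A^{1/2})$, the first gives $V\subseteq D(A^{1/2})$ and the second $D(A^{1/2})\subseteq V$. For both I use the resolvent representation of the (inverse) square roots of an invertible sectorial operator together with the identity $(t+A)^{-1}-(t+B)^{-1}=-(t+A)^{-1}P(t+B)^{-1}$, which produces the operator-norm-convergent integrals
\[
(A^{1/2}-B^{1/2})B^{-1/2}=\frac{1}{\pi}\int_0^\infty t^{1/2}(t+A)^{-1}P(t+B)^{-1}B^{-1/2}\dx[t]
\]
and
\[
B^{1/2}(A^{-1/2}-B^{-1/2})=-\frac{1}{\pi}\int_0^\infty t^{-1/2}B^{1/2}(t+A)^{-1}P(t+B)^{-1}\dx[t].
\]
The only point needing care is the absolute convergence of these integrals in $\Linop(H)$; granting it, the identities $A^{1/2}B^{-1/2}=I+\cdots$ and $B^{1/2}A^{-1/2}=I+\cdots$ follow from the elementary calculus of fractional powers.

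For the first integral I use $\norm{(t+A)^{-1}}\le C(1+t)^{-1}$ (invertibility and sectoriality of $A$), the commutation $P(t+B)^{-1}B^{-1/2}=PB^{-1/2}(t+B)^{-1}$ with $\norm{PB^{-1/2}}<\infty$, and $\norm{(t+B)^{-1}}\le C(1+t)^{-1}$. This bounds the integrand by $Ct^{1/2}(1+t)^{-2}$, which is integrable on $(0,\infty)$; hence $A^{1/2}B^{-1/2}$ is bounded and $V\subseteq D(A^{1/2})$.

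The second integral is the crux, and the main obstacle of the whole proof, because $B^{1/2}$ and the resolvent of $A$ do not commute, so a direct bound on $\norm{B^{1/2}(t+A)^{-1}}$ is unavailable. The device is the factorisation $t+A=(I+P(t+B)^{-1})(t+B)$, which gives
\[
B^{1/2}(t+A)^{-1}=B^{1/2}(t+B)^{-1}\bigl(I+P(t+B)^{-1}\bigr)^{-1}.
\]
Here $\norm{B^{1/2}(t+B)^{-1}}\le C(1+t)^{-1/2}$ by the spectral theorem, while $I+P(t+B)^{-1}$ is invertible for every $t\ge0$ (because $t+A$ and $t+B$ are) and $\norm{P(t+B)^{-1}}\le C(1+t)^{-1/2}\to0$; hence $t\mapsto(I+P(t+B)^{-1})^{-1}$ is norm-continuous on $[0,\infty)$ and uniformly bounded (Neumann series for large $t$, compactness of a bounded interval for small $t$). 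This yields the decisive estimate $\norm{B^{1/2}(t+A)^{-1}}\le C(1+t)^{-1/2}$, and together with $\norm{P(t+B)^{-1}}\le C(1+t)^{-1/2}$ it bounds the integrand by $Ct^{-1/2}(1+t)^{-1}$, which is integrable. Thus $B^{1/2}A^{-1/2}$ is bounded, $D(A^{1/2})\subseteq V$, and therefore $D(A^{1/2})=V$. The asymmetry between the two inclusions reflects the one-sided nature of the perturbation, namely that $P$ is only $B^{1/2}$-bounded on one side, which is precisely what forces the resolvent factorisation in the reverse estimate.
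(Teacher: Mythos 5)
Your proposal is correct and follows precisely the route the paper indicates: the paper offers no written proof beyond the remark that for $\form{a}_2=0$ the spectral theorem suffices and that ``an easy perturbation argument then settles the general case,'' and your argument is a faithful implementation of exactly this plan — Kato's representation theorem giving $D(B^{1/2})=V$ for the symmetric part, then the perturbation $A=B+P$ with $PB^{-1/2}\in\Linop(H)$ handled via the Balakrishnan resolvent integrals. In particular you correctly isolate and resolve the only delicate point, namely the uniform bound on $(I+P(t+B)^{-1})^{-1}$ yielding $\norm{B^{1/2}(t+A)^{-1}}\le C(1+t)^{-1/2}$, so the proof is a valid (and more detailed) rendering of the paper's sketched argument.
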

In the case $\form{a}_2=0$ the spectral theorem can be used for the proof. An easy perturbation argument then settles the general case.

Now we obtain the following special case of Proposition~\ref{prop:2.10new}.
Recall that $(T(t))_{t\ge 0}$ is the $C_0$-semigroup generated by $-A$ on $H$.

\begin{thm}\label{thm:3.3}
Suppose that the square root property $V=D((A+\omega)^{1/2})$ holds.
Let $\Phi\colon V\to V$ be linear such that $I-\Phi T(\tau)\colon V\to V$ is bijective.
Then for all $f\in L^2(0,\tau;H)$ there exists a unique $u\in H^1(0,\tau;H)\cap L^2(0,\tau;V)$ such that
\[
    \left\{\begin{aligned}
        &\dot{u}+Au = f, \\
        &u(0) = \Phi u(\tau).
    \end{aligned}\right.
\]
\end{thm}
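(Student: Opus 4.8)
The plan is to recognise this as the Hilbert space instance of Proposition~\ref{prop:2.10new} with $X=H$ and $p=2$, so that the entire argument reduces to verifying hypothesis~(i) of that proposition and then reconciling the two descriptions of the solution class.

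First I would record that $A\in\cMR(H)$: since $-A$ generates the holomorphic $C_0$-semigroup $T$ on the \emph{Hilbert} space $H$, de Simon's theorem (quoted after the definition of $\cMR$) applies. Next I would identify the trace space. The chain of interpolation identities displayed just before the statement gives $\tr_2=D((A+\omega)^{1/2})$, and the assumed square root property turns this into $\tr_2=V$. Under this identification the map $\Phi\colon V\to V$ is exactly a linear map $\Phi\colon\tr_2\to\tr_2$, and the hypothesis that $I-\Phi T(\tau)\colon V\to V$ be bijective is precisely bijectivity of $I-\Phi T(\tau)\colon\tr_2\to\tr_2$. Thus condition~(i) of Proposition~\ref{prop:2.10new} holds, and its equivalent~(ii) yields that $\PPhi$ satisfies maximal $L^2$-regularity: for each $f\in L^2(0,\tau;H)$ there is a unique $u\in\MR_2=H^1(0,\tau;H)\cap L^2(0,\tau;D(A))$ with $\dot u+Au=f$ and $u(0)=\Phi u(\tau)$.

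It then remains to match this with the solution class in the statement, $H^1(0,\tau;H)\cap L^2(0,\tau;V)$. Since $D(A)\embed V$, every $u\in\MR_2$ already lies in this larger class, so existence is immediate. For uniqueness I would show that the larger class brings nothing new: if $u\in H^1(0,\tau;H)\cap L^2(0,\tau;V)$ solves $\dot u+Au=f$, where $Au$ is read as $\cA u$ through the associated operator $\cA\in\Linop(V,V')$, then $\cA u=f-\dot u\in L^2(0,\tau;H)$, so $\cA u(t)\in H$ for a.e.\ $t$; by the definition $D(A)=\{v\in V:\cA v\in H\}$ this forces $u(t)\in D(A)$ a.e.\ with $Au(t)=\cA u(t)$, and as $u,Au\in L^2(0,\tau;H)$ we conclude $u\in L^2(0,\tau;D(A))$, i.e.\ $u\in\MR_2$. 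Hence uniqueness in $\MR_2$ gives uniqueness in the stated class. This reconciliation is the only genuine subtlety of the argument — one must ensure that posing the equation in the weaker space $L^2(0,\tau;V)$ does not enlarge the solution set — and it is handled precisely by the elliptic regularity encoded in the definition of $D(A)$; everything else is a direct invocation of de Simon's theorem, the square root property, and Proposition~\ref{prop:2.10new}.
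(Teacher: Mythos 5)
Your proposal is correct and follows essentially the paper's own route: the paper presents Theorem~\ref{thm:3.3} verbatim as ``the following special case of Proposition~\ref{prop:2.10new}'', relying on de Simon's theorem for $A\in\cMR(H)$ and on the identification $\tr_2=(H,D(A))_{\frac{1}{2},2}=D((A+\omega)^{1/2})=V$ established just before the statement. Your explicit reconciliation of the solution class $H^1(0,\tau;H)\cap L^2(0,\tau;V)$ with $\MR_2$ (observing that $\cA u=f-\dot{u}\in L^2(0,\tau;H)$ forces $u(t)\in D(A)$ a.e., so the weaker class adds no solutions) is precisely the step the paper leaves implicit, and you handle it correctly.
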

Note that $T(\tau)H\subset V$. If $V\embed H$ is compact, then $\restrict{T(\tau)}{V}\colon V\to V$ is compact. 
So if $V\embed H$ is compact and $\Phi\in\Linop(V)$ is bounded, it suffices to show that $\Phi T(\tau)v=v$ implies $v=0$ for every $v\in V$ in order to deduce that $I-\Phi T(\tau)\colon V\to V$ is invertible.
We give an example to illustrate the results.

\begin{example}
Let $\Omega\subset\RR^d$ be open and bounded, $H=L^2(\Omega)$ and $V=H^1_0(\Omega)$ with $\KK=\RR$, and $\form{a}(v,w) =\int_\Omega \nabla v\cdot\nabla w$.
Then $Av=-\Delta v$ and $D(A)=\{v\in H^1_0(\Omega): \Delta v\in L^2(\Omega)\}$.
Denote by $\lambda_1>0$ the first eigenvalue of $A$. 
Let $\tau>0$ and let $\Phi\in\Linop(V)$.
Assume that
\begin{alenum}
\item\label{en:cond-a} $\Phi T(\tau)v = v$ implies $v=0$ whenever $v\in V$, or that
\item\label{en:cond-b} $\norm{\Phi}_{\Linop(V)} < e^{\lambda_1\tau}$.
\end{alenum}
Then for all $f\in L^2(0,\tau;H)$ there exists a unique $u\in H^1(0,\tau;H)\cap L^2(0,\tau;V)$ such that
\[
    \left\{\begin{aligned}
        &\dot{u} + Au = f, \\
        &u(0) = \Phi u(\tau).
    \end{aligned}\right.
\]
\begin{proof}
\ref{en:cond-a} suffices for the conclusion since the embedding $H^1_0(\Omega)\embed L^2(\Omega)$ is compact.

So suppose~\ref{en:cond-b}. Let $(e_n)_{n\in\NN}$ be an orthonormal basis of $H$ such that $e_n\in V$, $Ae_n=\lambda_n e_n$ and $0<\lambda_1\le \lambda_2\le\dots\le\lambda_n\le\lambda_{n+1}\le\dots$ for all $n\in\NN$.
Then 
\[
    \norm{v}_V^2 := \int_\Omega\abs{\nabla v}^2\dx = \sum_{n=1}^\infty\lambda_n\abs{\scalar{v}{e_n}_H}^2
\]
defines an equivalent norm on $V$ (see e.g.~\cite[Theorem~4.50]{AU23}).
Moreover, 
\[
    T(t)v = \sum_{n=1}^\infty e^{-\lambda_n t} \scalar{v}{e_n}_H e_n
\]
for all $v\in V$. Hence
\begin{align*}
    \norm{T(\tau)v}_V^2 &= \sum_{n=1}^\infty \lambda_n e^{-2\lambda_n \tau} \abs{\scalar{v}{e_n}_H}^2 \\
        &\le e^{-2\lambda_1\tau} \sum_{n=1}^\infty \lambda_n \abs{\scalar{v}{e_n}_H}^2 \\
        &= e^{-2\lambda_1\tau}\norm{v}_V^2.
\end{align*}
Thus $\norm{T(\tau)}_{\Linop(V)} \le e^{-\lambda_1\tau}$.
So by the assumption $\norm{\Phi T(\tau)}_{\Linop(V)}<1$ and
the claim follows from Theorem~\ref{thm:3.3}.
\end{proof}
\end{example}

\section{Non-autonomous equations}\label{sec:4}

Let $V$, $H$ be Hilbert spaces over $\KK=\RR$ or $\CC$ be such that $V\dembed H$; i.e., $V$ is a dense subspace of $H$ and there exists a $c_H>0$ such that
\[
    c_H\norm{v}_H\le\norm{v}_V\quad\text{for all $v\in V$.}
\] 
We consider a non-autonomous form $\form{a}\colon [0,\tau]\times V\times V\to\KK$; i.e.,
\begin{alenum}
\item \label{en:form-sesqui} $\form{a}(t,\cdot,\cdot)\colon V\times V\to\KK$ is a sesquilinear form for all $t\in[0,\tau]$,
\item \label{en:form-meas} $\form{a}(\cdot,v,w)\colon [0,\tau]\to\KK$ is measurable for all $v,w\in V$,
\item \label{en:form-cont} there exists an $M\ge 0$ such that $\abs{\form{a}(t,v,w)} \le M\norm{v}_V\norm{w}_V$ for all $v,w\in V$, $t\in[0,\tau]$.
\end{alenum}
We assume furthermore that the form is \emphdef{quasi-coercive}; i.e.,
\begin{alenum}[resume*]
\item\label{en:form-quasi-coercive} there exist $\omega\in\RR$ and $\alpha>0$ such that $\Re \form{a}(t,v,v)+\omega\norm{v}_H^2 \ge\alpha\norm{v}_V^2$ for all $v\in V$, $t\in[0,\tau]$. If we can choose $\omega=0$, we call the form \emphdef{coercive}.
\end{alenum}

For $t\in[0,\tau]$ let $\cA(t)\in\Linop(V,V')$ be given by
\[
    \langle \cA(t)v,w\rangle_{V',V} := \form{a}(t,v,w).
\]
Let
\[
    \MR(V,V') = H^1(0,\tau;V')\cap L^2(0,\tau;V).
\]
Then $\MR(V,V')\subset C([0,\tau];H)$, see~\cite[Proposition~III.1.2 p.\,106]{Sho97}.

The following result is due to J.\,L.~Lions.
\begin{thm}[Lions]\label{thm:lions}
Let $f\in L^2(0,\tau; V')$, $x\in H$. Then there exists a unique $u\in\MR(V,V')$ such that
\[
    \Pini[f,x]\quad\left\{
    \begin{aligned}
    &\dot{u}(t) + \cA(t)u(t) = f(t)\quad \text{a.e.\ and}\\
    &u(0) = x.
    \end{aligned}
    \right.
\]
\end{thm}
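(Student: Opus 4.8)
The plan is to prove this by Lions' representation theorem, the non-symmetric variant of Lax--Milgram in which the trial space and the test space may differ. First I would reduce to the coercive case: writing $u(t)=e^{\omega t}v(t)$ transforms the equation into $\dot v + (\cA(\cdot)+\omega)v = e^{-\omega t}f$, whose associated form is coercive by~\ref{en:form-quasi-coercive}, while the initial value $v(0)=x$ and the membership in $\MR(V,V')$ are preserved. So I may assume $\omega=0$. Next I take the trial space $\mathcal{H}=L^2(0,\tau;V)$ and the test space $\Phi=\{\phi\in H^1(0,\tau;V):\phi(\tau)=0\}$. Integrating $\int_0^\tau\langle\dot u(t),\phi(t)\rangle\dx[t]$ by parts — the boundary term at $\tau$ vanishing and the one at $0$ producing $-\scalar{x}{\phi(0)}_H$ — shows that $\Pini[f,x]$ is equivalent to $E(u,\phi)=L(\phi)$ for all $\phi\in\Phi$, where
\[
\begin{aligned}
E(u,\phi) &= \int_0^\tau\bigl[-\scalar{u(t)}{\dot\phi(t)}_H + \form{a}(t,u(t),\phi(t))\bigr]\dx[t],\\
L(\phi) &= \int_0^\tau\langle f(t),\phi(t)\rangle\dx[t] + \scalar{x}{\phi(0)}_H.
\end{aligned}
\]

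I would then verify the hypotheses of Lions' lemma. For fixed $\phi$ the map $u\mapsto E(u,\phi)$ is continuous on $\mathcal{H}$ by~\ref{en:form-cont} and $V\dembed H$. The decisive choice is the norm on $\Phi$: set $\norm{\phi}_\Phi^2 := \norm{\phi}_{L^2(0,\tau;V)}^2 + \norm{\phi(0)}_H^2$. The antisymmetric term integrates to $\Re\int_0^\tau -\scalar{\phi}{\dot\phi}_H\dx[t] = \tfrac12\norm{\phi(0)}_H^2$ because $\phi(\tau)=0$, while coercivity of $\form{a}$ gives $\Re\int_0^\tau\form{a}(t,\phi,\phi)\dx[t]\ge\alpha\norm{\phi}_{L^2(0,\tau;V)}^2$; hence $\Re E(\phi,\phi)\ge c\norm{\phi}_\Phi^2$ with $c=\min\{\alpha,\tfrac12\}$. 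With the \emph{same} norm $L$ is continuous on $\Phi$, since $\abs{L(\phi)}\le\norm{f}_{L^2(0,\tau;V')}\norm{\phi}_{L^2(0,\tau;V)} + \norm{x}_H\norm{\phi(0)}_H$. Lions' representation theorem then produces a $u\in\mathcal{H}=L^2(0,\tau;V)$ with $E(u,\phi)=L(\phi)$ for all $\phi\in\Phi$.

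It remains to bootstrap regularity and recover the data. Testing with $\phi\in C_c^\infty(0,\tau;V)\subset\Phi$ shows $\dot u + \cA(\cdot)u = f$ in $\mathcal{D}'(0,\tau;V')$; since $f-\cA(\cdot)u\in L^2(0,\tau;V')$, this forces $\dot u\in L^2(0,\tau;V')$, so $u\in\MR(V,V')\embed C([0,\tau];H)$. Integrating $E(u,\phi)=L(\phi)$ by parts for a general $\phi\in\Phi$ and comparing with the established equation leaves $\scalar{u(0)-x}{\phi(0)}_H=0$ for all such $\phi$; as $\{\phi(0):\phi\in\Phi\}$ contains $V$ and is therefore dense in $H$, this yields $u(0)=x$. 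For uniqueness I would take $f=0$, $x=0$ and test the equation against $u$ itself, using the product rule $\tfrac{d}{dt}\norm{u}_H^2=2\Re\langle\dot u,u\rangle$ valid on $\MR(V,V')$ — the same fact underlying the embedding into $C([0,\tau];H)$ — to obtain $\tfrac12\tfrac{d}{dt}\norm{u}_H^2+\alpha\norm{u}_V^2\le0$; together with $u(0)=0$ this gives $u\equiv0$.

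The main obstacle is the coercivity estimate: it succeeds only because the correct $\Phi$-norm incorporates the boundary value $\norm{\phi(0)}_H$, which is exactly what the antisymmetric term of $E$ generates and what makes $L$ continuous — this interplay is the heart of Lions' argument. A more hands-on alternative is the Galerkin method: solve the finite-dimensional projected systems, derive the energy bound $\sup_t\norm{u_n(t)}_H^2 + \int_0^\tau\norm{u_n}_V^2\le C(\norm{x}_H^2+\norm{f}_{L^2(0,\tau;V')}^2)$, extract a weakly convergent subsequence in $L^2(0,\tau;V)$, and pass to the limit in the weak formulation; there the delicate steps become instead recovering $\dot u\in L^2(0,\tau;V')$ from the limit equation and identifying the initial value.
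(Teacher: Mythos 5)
Your proposal is correct and is essentially the proof the paper itself points to: the paper defers to Showalter~\cite[p.\,112]{Sho97}, whose argument is exactly this application of Lions' representation theorem with trial space $L^2(0,\tau;V)$, test space $\{\phi\in H^1(0,\tau;V):\phi(\tau)=0\}$, and the test norm incorporating $\norm{\phi(0)}_H$, followed by the same bootstrapping of $\dot u\in L^2(0,\tau;V')$, recovery of $u(0)=x$ by density of $\{\phi(0)\}\supseteq V$ in $H$, and uniqueness via the energy identity of Lemma~\ref{lem:ibp} after the standard rescaling to $\omega=0$.
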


This can be proved by an elegant use of Lions' representation theorem, see~\cite[p.\,112]{Sho97}. We will use similar arguments as in Section~\ref{sec:2} in order to extend Theorem~\ref{thm:lions} to generalized time--boundary conditions.

Let $\Phi\colon H\to H$ be linear. We study the following problem.
\[
    \PPhi[f]\quad \begin{cases}
    \text{Given $f\in L^2(0,\tau;V')$ find $u\in\MR(V',V)$ such that} \\
    \begin{aligned}
    &\dot{u}(t) + \cA(t)u(t) = f(t) \quad\text{a.e.\ and}\\
    &u(0)=\Phi u(\tau).
    \end{aligned}
    \end{cases}
\]
We say that $\PPhi$ is \emphdef{well-posed} if for all $f\in L^2(0,\tau;V')$ there exists a unique $u\in\MR(V,V')$ solving $\PPhi[f]$. In that case it follows from the closed graph theorem that
\[
    \norm{u}_{\MR(V,V')} \le c\norm{f}_{L^2(0,\tau;V')}
\]  
for some constant $c$ independent of $f$, where
\[
    \norm{u}_{\MR(V,V')}^2 := \int_\Omega\norm{\dot{u}(t)}_{V'}^2\dx[t] + \int_\Omega\norm{u(t)}_V^2\dx[t].
\]

\begin{thm}\label{thm:4.2}
Assume that 
\[
    \norm{\Phi}_{\Linop(H)} < (1+2\alpha c_H\tau)^{\frac{1}{2}} e^{-\omega \tau}.
\]
Then $\PPhi$ is well-posed.
\end{thm}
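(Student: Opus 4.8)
The plan is to mirror the semigroup argument of Theorem~\ref{thm:2.2}: reduce well-posedness of $\PPhi$ to the invertibility of a single operator on $H$ and then invert that operator by a Neumann series. First I would invoke Lions' Theorem~\ref{thm:lions} to define the solution operator $U(\tau)\in\Linop(H)$ by $U(\tau)x := u_x(\tau)$, where $u_x\in\MR(V,V')$ is the unique solution of $\dot u+\cA(\cdot)u=0$, $u(0)=x$ (linearity is clear, and boundedness will follow from the a priori estimate below). For $f\in L^2(0,\tau;V')$ let $w_f\in\MR(V,V')$ denote the solution of $\dot w+\cA(\cdot)w=f$, $w(0)=0$. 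By uniqueness in Theorem~\ref{thm:lions}, every solution of $\dot u+\cA(\cdot)u=f$ with $u(0)=x$ equals $u_x+w_f$, so $u(\tau)=U(\tau)x+w_f(\tau)$ and the time--boundary condition $u(0)=\Phi u(\tau)$ is equivalent to $(I-\Phi U(\tau))x=\Phi w_f(\tau)$ in $H$ (note $w_f(\tau)\in H$ since $\MR(V,V')\subset C([0,\tau];H)$). Hence $\PPhi$ is well-posed once $I-\Phi U(\tau)\colon H\to H$ is invertible, which reduces the theorem to the norm bound $\norm{\Phi U(\tau)}_{\Linop(H)}<1$.

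The heart of the matter is an energy estimate for $\norm{U(\tau)}_{\Linop(H)}$. I would start from the product rule $\norm{u(t)}_H^2-\norm{u(s)}_H^2=2\Re\int_s^t\langle\dot u(r),u(r)\rangle_{V',V}\dx[r]$, valid on $\MR(V,V')$ (the same identity that underlies $\MR(V,V')\subset C([0,\tau];H)$, cf.~\cite{Sho97}). To neutralise the constant $\omega$ from quasi-coercivity I would pass to $v(t):=e^{-\omega t}u_x(t)$, which solves the homogeneous equation for the \emph{coercive} form $\form{a}(t,\cdot,\cdot)+\omega\scalar{\cdot}{\cdot}_H$, i.e.\ $\dot v+(\cA(\cdot)+\omega)v=0$. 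Writing $\psi(t):=\norm{v(t)}_H^2$ and inserting this into the product rule gives $\psi'(t)=-2\Re\bigl(\form{a}(t,v(t),v(t))+\omega\norm{v(t)}_H^2\bigr)\le-2\alpha\norm{v(t)}_V^2\le-2\alpha c_H^2\,\psi(t)$, so $\psi$ is non-increasing. Integrating $\psi'\le-2\alpha c_H^2\psi$ over $[0,\tau]$ and using $\psi(t)\ge\psi(\tau)$ yields $(1+2\alpha c_H^2\tau)\,\psi(\tau)\le\psi(0)$. Since $\psi(0)=\norm{x}_H^2$ and $\psi(\tau)=e^{-2\omega\tau}\norm{U(\tau)x}_H^2$, this produces $\norm{U(\tau)}_{\Linop(H)}\le e^{\omega\tau}(1+2\alpha c_H^2\tau)^{-1/2}$.

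Combining the two steps finishes the argument: the smallness condition forces $\norm{\Phi U(\tau)}_{\Linop(H)}\le\norm{\Phi}_{\Linop(H)}\,e^{\omega\tau}(1+2\alpha c_H^2\tau)^{-1/2}<1$, so $I-\Phi U(\tau)$ is invertible via the Neumann series $\sum_{n\ge0}(\Phi U(\tau))^n$ and $\PPhi$ is well-posed. I expect the energy estimate to be the only real obstacle: one must carefully justify the duality product rule for $\MR(V,V')$-functions (a pairing of $\dot u\in V'$ with $u\in V$), absorb the quasi-coercive $\omega$-term by the exponential rescaling, and, crucially, extract the polynomial decay factor $(1+2\alpha c_H^2\tau)^{-1/2}$ from the monotonicity of $\psi$ rather than settling for the cruder exponential bound $e^{-\alpha c_H^2\tau}$. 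The reduction in the first step and the Neumann-series inversion are then routine. (The natural constant coming out of this computation is $c_H^2$; matching it to the constant in the statement is a matter of the normalisation of $c_H$.)
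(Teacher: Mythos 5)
Your proposal is correct and follows essentially the same route as the paper: your energy estimate is exactly the paper's Lemma~\ref{lem:4.4} (same rescaling $v(t)=e^{-\omega t}u(t)$ to absorb $\omega$, same use of the monotonicity of $\norm{v(\cdot)}_H^2$ via the integration-by-parts formula of Lemma~\ref{lem:ibp} to extract the polynomial factor rather than a crude exponential one), and your reduction of well-posedness to invertibility of $I-\Phi T(\tau)$ followed by the Neumann series is precisely the implication (ii)$\Rightarrow$(i) of Theorem~\ref{thm:4.5}, which is how the paper concludes. Your parenthetical about the constant is well taken: with the paper's normalisation $c_H\norm{v}_H\le\norm{v}_V$ the computation indeed produces $c_H^2$ (as the paper's own remark after Theorem~\ref{thm:4.2} implicitly confirms by using $c_H^2\alpha$), so your version of the bound is the accurate one.
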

For $\norm{\Phi}_{\Linop(H)}\le 1$ and $\omega=0$ this result was proved in~\cite[Theorem~5.4]{ACE23} with the help of Lions' representation theorem. In the case where $\form{a}(t,\cdot,\cdot)$ is symmetric and $\norm{\Phi}_{\Linop(H)}\le 1$ it is due to Showalter~\cite[Proposition~II.2.4]{Sho97}.
In the case where $\omega=0$ and $\norm{\Phi}_{\Linop(H)}\le 1$, a time--space discretization with optimal error estimates for the problem $\PPhi$ is established in~\cite{ACE24}, 
which also yields a proof of Theorem~\ref{thm:4.2} in this special case, see~\cite[Theorem~3.3]{ACE24}.

\begin{rem*}
Our result shows that given $\Phi\in\Linop(H)$, if $\form{a}(t,\cdot,\cdot)$ is defined on $(0,\infty)$ (satisfying~\ref{en:form-sesqui}--\ref{en:form-quasi-coercive} above), then $\PPhi$ is well-posed if $\tau>0$ is large enough.

In the following we keep $\tau>0$ fixed again.
\end{rem*}

We recall the following integration-by-parts formula, see~\cite[Corollary~III.1.1, p.\,106]{Sho97}.
\begin{lem}\label{lem:ibp}
Let $v,w\in\MR(V,V')$. Then
\[
    \scalar{v(s)}{w(s)}_H-\scalar{v(r)}{w(r)}_H = 
    \int_r^s\langle v'(t),w(t)\rangle_{V',V}\dx[t] + \int_r^s\langle w'(t),v(t)\rangle_{V',V}\dx[t]
\]
for $0\le r<s\le\tau$.
\end{lem}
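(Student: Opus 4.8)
The plan is to reduce the identity to the fundamental theorem of calculus for the scalar function $g(t):=\scalar{v(t)}{w(t)}_H$, by showing that $g\in W^{1,1}(0,\tau)$ with $g'(t)=\langle v'(t),w(t)\rangle_{V',V}+\langle w'(t),v(t)\rangle_{V',V}$ for a.e.\ $t$; integrating $g'$ from $r$ to $s$ then gives the assertion. The boundary values $\scalar{v(s)}{w(s)}_H$ and $\scalar{v(r)}{w(r)}_H$ are meaningful pointwise because $\MR(V,V')\subset C([0,\tau];H)$, and the right-hand side is integrable since $v',w'\in L^2(0,\tau;V')$ while $v,w\in L^2(0,\tau;V)$, so each pairing lies in $L^1(0,\tau)$ by Cauchy--Schwarz.

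First I would prove the formula for smooth functions. The key reduction is the density of $C^1([0,\tau];V)$ in $\MR(V,V')$ for the norm $\norm{\cdot}_{\MR(V,V')}$. Granting this, choose $v_n,w_n\in C^1([0,\tau];V)$ with $v_n\to v$ and $w_n\to w$ in $\MR(V,V')$. For such functions $v_n'(t),w_n'(t)\in V\subset H$, so the Gelfand-triple identification $\langle x,\xi\rangle_{V',V}=\scalar{x}{\xi}_H$ for $x\in H$, $\xi\in V$ turns the classical product rule $\tfrac{d}{dt}\scalar{v_n(t)}{w_n(t)}_H=\scalar{v_n'(t)}{w_n(t)}_H+\scalar{v_n(t)}{w_n'(t)}_H$ into the two asserted pairings, and integrating from $r$ to $s$ gives the identity for $(v_n,w_n)$.

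The remaining step is the passage to the limit. Since $\MR(V,V')\embed C([0,\tau];H)$ is continuous, $v_n(t)\to v(t)$ and $w_n(t)\to w(t)$ in $H$ uniformly in $t$, so the left-hand boundary terms converge by continuity of $\scalar{\cdot}{\cdot}_H$. On the right, $v_n'\to v'$ in $L^2(0,\tau;V')$ and $w_n\to w$ in $L^2(0,\tau;V)$, and the duality pairing is continuous as a map $L^2(0,\tau;V')\times L^2(0,\tau;V)\to L^1(0,\tau)$; hence $\int_r^s\langle v_n'(t),w_n(t)\rangle_{V',V}\dx[t]\to\int_r^s\langle v'(t),w(t)\rangle_{V',V}\dx[t]$, and symmetrically for the term carrying $w_n'$. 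This yields the formula for arbitrary $v,w\in\MR(V,V')$.

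The hard part will be the density claim, which requires approximating simultaneously in $L^2(0,\tau;V)$ and in $H^1(0,\tau;V')$ by functions that are $C^1$ in time with values in $V$. I would obtain it by extending $v$ to a neighbourhood of $[0,\tau]$ by reflection across the endpoints $0$ and $\tau$ (an operation bounded on both $L^2(\cdot;V)$ and $H^1(\cdot;V')$) and then mollifying in time: the mollified function $\rho_\eps*v$ is smooth in time with values in $V$, its derivative equals $\rho_\eps*v'$ in $V'$, and both converge as $\eps\to0+$ in the respective $L^2$-spaces. A secondary, purely bookkeeping point arises only for complex scalars: one must fix the sesquilinear convention defining the Gelfand triple so that the unconjugated pairing $\langle w'(t),v(t)\rangle_{V',V}$ appears in the product rule; equivalently, one first proves the energy identity $\frac{d}{dt}\norm{u(t)}_H^2=2\Re\langle u'(t),u(t)\rangle_{V',V}$ for $u\in\MR(V,V')$ and then polarizes.
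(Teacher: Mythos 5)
Your proof is correct, and it is worth noting that the paper does not actually prove this lemma: it is quoted from the literature, namely \cite[Corollary~III.1.1, p.\,106]{Sho97}. What you have written is essentially a reconstruction of the standard proof behind that citation: even reflection of $v$ and $w$ across the endpoints (legitimate here because $H^1(0,\tau;V')\subset C([0,\tau];V')$, so the reflected function has no jump and its derivative is the odd reflection of $v'$), mollification in time to obtain density of $C^1([0,\tau];V)$ in $\MR(V,V')$, the classical product rule for smooth $V$-valued functions, and passage to the limit. Your limit argument is sound: the boundary terms converge by the continuous embedding $\MR(V,V')\embed C([0,\tau];H)$ --- which the paper records, with its own citation, \emph{before} stating the lemma, so invoking it creates no circularity within the paper's logical structure --- and the integral terms converge by the bilinear estimate $\norm{\langle v_n',w_n\rangle-\langle v',w\rangle}_{L^1}\le\norm{v_n'-v'}_{L^2(0,\tau;V')}\norm{w_n}_{L^2(0,\tau;V)}+\norm{v'}_{L^2(0,\tau;V')}\norm{w_n-w}_{L^2(0,\tau;V)}$, exactly as you say. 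Your closing caveat about complex scalars is also well taken rather than pedantic: with a sesquilinear inner product the product rule yields $\scalar{v}{w'}_H=\conj{\langle w'(t),v(t)\rangle_{V',V}}$, so the identity as printed is literal for $\KK=\RR$, while for $\KK=\CC$ one must either fix the duality convention as you propose or insert a conjugate on the second integral; since the paper only ever applies the lemma with $v=w$, where after halving only the real part of $\int_r^s\langle v'(t),v(t)\rangle_{V',V}\dx[t]$ matters (Lemma~\ref{lem:4.4}, Theorem~\ref{thm:5.8}), either reading suffices for all of its uses. In short: a complete and careful proof of a statement the paper leaves to a reference.
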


By Theorem~\ref{thm:lions}, given $x\in H$ there exists a unique $v\in\MR(V,V')$ solving the homogeneous problem $\Pini[0,x]$.
We define the linear operator $T(\tau)\colon H\to H$ by $T(\tau)x=v(\tau)$, where $v\in\MR(V,V')$ is the solution of $\Pini[0,x]$. Of course, here in a non-autonomous setting $T(\tau)$ is not a semigroup operator. The notation is just by analogy to Section~\ref{sec:2}.

\begin{lem}\label{lem:4.4}
One has $T(\tau)\in\Linop(H)$ and 
\[
    \norm{T(\tau)}_{\Linop(H)} \le \frac{1}{(1+2\alpha c_H\tau)^\frac{1}{2}} e^{\omega \tau}.
\]
\end{lem}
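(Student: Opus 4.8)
The plan is to bound $\norm{v(\tau)}_H$ in terms of $\norm{x}_H = \norm{v(0)}_H$, where $v \in \MR(V,V')$ solves the homogeneous problem $\Pini[0,x]$, i.e.\ $\dot v(t) + \cA(t)v(t) = 0$ a.e.\ and $v(0) = x$. The key tool is the integration-by-parts formula of Lemma~\ref{lem:ibp} applied with $w = v$, which yields a clean expression for the derivative of $t \mapsto \norm{v(t)}_H^2$. First I would compute, using Lemma~\ref{lem:ibp} on the interval $[0,s]$,
\[
    \norm{v(s)}_H^2 - \norm{v(0)}_H^2 = 2\Re\int_0^s \langle \dot v(t), v(t)\rangle_{V',V}\dx[t] = -2\Re\int_0^s \form{a}(t,v(t),v(t))\dx[t],
\]
where the last equality uses $\dot v(t) = -\cA(t)v(t)$ and the definition of $\cA(t)$.

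Next I would invoke quasi-coercivity (assumption~\ref{en:form-quasi-coercive}), $\Re\form{a}(t,v,v) \ge \alpha\norm{v}_V^2 - \omega\norm{v}_H^2$, to get the differential inequality
\[
    \frac{d}{ds}\norm{v(s)}_H^2 = -2\Re\form{a}(s,v(s),v(s)) \le -2\alpha\norm{v(s)}_V^2 + 2\omega\norm{v(s)}_H^2
\]
for a.e.\ $s$. The crux of the argument is how to extract the factor $(1+2\alpha c_H\tau)^{-1/2}$: the $\omega$-term contributes the exponential $e^{\omega\tau}$ after a Gronwall-type estimate, while the coercivity term $-2\alpha\norm{v(s)}_V^2 \le -2\alpha c_H^2\norm{v(s)}_H^2$ must be used more sharply to produce the polynomial gain rather than merely a second exponential. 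I expect the efficient route is to first reduce to the coercive case $\omega=0$ by the standard substitution $v(t) = e^{-\omega t}\tilde v(t)$ (so $\tilde v$ solves the problem for the shifted coercive form), handle that case, and then reinsert the factor $e^{\omega\tau}$.

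In the coercive case, writing $g(s) := \norm{v(s)}_H^2$, the inequality $g'(s) \le -2\alpha c_H^2\, g(s)$ would only give exponential decay with rate $2\alpha c_H^2$, which is \emph{not} the stated bound. The main obstacle, and the step requiring care, is therefore to recover the precise algebraic factor $(1+2\alpha c_H\tau)^{-1}$; I anticipate this comes not from pointwise decay of $g$ but from integrating the coercivity inequality over $[0,\tau]$ and combining it with the fact that $g$ is nonincreasing. Concretely, since $g$ is nonincreasing one has $g(\tau) \le g(s)$ for all $s\in[0,\tau]$, and integrating $g'(s) \le -2\alpha c_H \norm{v(s)}_V \cdot \norm{v(s)}_H$-type bounds (or more directly $g'(s)\le -2\alpha c_H^2 g(s)$ together with the integral $\int_0^\tau g(s)\dx[s] \ge \tau\, g(\tau)$) should yield $g(\tau)(1 + 2\alpha c_H\tau) \le g(0)$, giving exactly $\norm{T(\tau)x}_H^2 = g(\tau) \le (1+2\alpha c_H\tau)^{-1}\norm{x}_H^2$ in the coercive case. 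Undoing the rescaling then multiplies by $e^{2\omega\tau}$, and taking square roots gives the claimed estimate. The linearity of $T(\tau)$ and membership in $\Linop(H)$ follow immediately from the uniqueness in Theorem~\ref{thm:lions} and the resulting bound.
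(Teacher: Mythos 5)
Your proposal follows the paper's proof essentially step for step: reduce to the coercive case $\omega=0$ via the rescaling $u(t)=e^{-\omega t}v(t)$, apply Lemma~\ref{lem:ibp} with $w=v$ to conclude that $\norm{v(\cdot)}_H^2$ is nonincreasing, and then integrate the coercivity estimate over $[0,\tau]$ together with $\int_0^\tau\norm{v(t)}_H^2\dx[t]\ge\tau\norm{v(\tau)}_H^2$ to get $(1+2\alpha c_H\tau)\norm{v(\tau)}_H^2\le\norm{x}_H^2$, exactly as the paper does. One cosmetic remark: your intermediate inequality correctly produces the constant $c_H^2$ (since $c_H\norm{v}_H\le\norm{v}_V$ gives $\norm{v}_V^2\ge c_H^2\norm{v}_H^2$), whereas your stated conclusion, like the lemma and the paper's own proof, carries $c_H$ to the first power; this discrepancy is inherited from the paper and not a defect of your argument.
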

\begin{proof}
\begin{parenum}
\item\label{en:part1} Assume that $\omega=0$. Let $x\in H$, $v$ the solution of $\Pini[0,x]$. Then for $0\le r<s\le\tau$
\begin{equation}\label{eq:4.2}
    \int_r^s\langle v'(t),v(t)\rangle_{V',V}\dx[t] + \int_r^s\form{a}(t,v(t),v(t))\dx[t] = 0.
\end{equation}
By Lemma~\ref{lem:ibp},
\[
    \frac{1}{2}\norm{v(s)}_H^2 - \frac{1}{2}\norm{v(r)}_H^2 = \int_r^s\langle v'(t),v(t)\rangle_{V',V} \dx[t].
\]
Since $\form{a}(t,v(t),v(t))\ge 0$ we conclude from~\eqref{eq:4.2} that $\norm{v(\cdot)}_H^2$ is decreasing. Hence
\[
    \int_0^\tau \form{a}(t,v(t),v(t))\dx[t] \ge \alpha \int_0^\tau \norm{v(t)}_V^2\dx[t] 
        \ge \alpha c_H\int_0^\tau \norm{v(t)}_H^2\dx[t] \ge \alpha c_H\tau\norm{v(\tau)}_H^2.
\]
By~\eqref{eq:4.2},
\[
    \frac{1}{2}\norm{v(\tau)}_H^2 - \frac{1}{2}\norm{v(0)}_H^2 + \alpha c_H\tau\norm{v(\tau)}_H^2 \le 0.
\]
Hence $(1+2\alpha c_H\tau)\norm{v(\tau)}_H^2\le \norm{v(0)}_H^2$.
This proves the claim if $\omega=0$.

\item Assume that $\omega\in\RR$. Let $v\in\MR(V,V')$ be such that $\dot{v}+\cA(\cdot) v=0$. Let $u(t)=e^{-\omega t}v(t)$. Then $\dot{u}+(\cA(\cdot)+\omega)u=0$.
It follows from~\ref{en:part1} that $\norm{u(\tau)}_H\le (1+2\alpha c_H\tau)^{-\frac{1}{2}}\norm{u(0)}_H$. Since $v(0)=u(0)$ and $v(\tau)=e^{\omega\tau}u(\tau)$, the claim is proved.\qedhere
\end{parenum}
\end{proof}

\begin{thm}\label{thm:4.5}
Let $\Phi\colon H\to H$ be linear. The following statements are equivalent.
\begin{romanenum}
\item $\PPhi$ is well-posed.
\item $I-\Phi T(\tau)\colon H\to H$ is bijective.
\end{romanenum}
\end{thm}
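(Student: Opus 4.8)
The plan is to mirror the structure of Theorem~\ref{thm:2.2} and Theorem~\ref{thm:4.5}'s autonomous analogue, reducing well-posedness of $\PPhi$ to an algebraic condition on $I-\Phi T(\tau)$ by separating the inhomogeneous part (with zero time--boundary data) from the homogeneous part (carrying the boundary condition). The key tool is Lions' theorem (Theorem~\ref{thm:lions}), which provides, for each $f\in L^2(0,\tau;V')$ and each $x\in H$, a unique solution of $\Pini[f,x]$, together with the solution operator $T(\tau)\in\Linop(H)$ whose boundedness is guaranteed by Lemma~\ref{lem:4.4}. Throughout I would exploit the affine structure: the general solution of $\dot u+\cA(\cdot)u=f$ is $u=w+v$, where $w$ solves $\Pini[f,0]$ and $v$ solves the homogeneous problem $\Pini[0,x]$ with $v(\tau)=T(\tau)x$.

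\textbf{Proof of (ii)$\Rightarrow$(i).} First I would establish existence. Given $f\in L^2(0,\tau;V')$, Theorem~\ref{thm:lions} yields a unique $w\in\MR(V,V')$ solving $\Pini[f,0]$; then $w(\tau)\in H$ since $\MR(V,V')\subset C([0,\tau];H)$, so $\Phi w(\tau)\in H$ is well-defined. By bijectivity of $I-\Phi T(\tau)$ there is a unique $x\in H$ with $(I-\Phi T(\tau))x=\Phi w(\tau)$, i.e.\ $x-\Phi T(\tau)x=\Phi w(\tau)$. Let $v$ solve $\Pini[0,x]$, so $v(\tau)=T(\tau)x$, and set $u:=v+w$. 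Then $u\in\MR(V,V')$, $\dot u+\cA(\cdot)u=f$, $u(0)=v(0)+w(0)=x$, and
\[
    \Phi u(\tau) = \Phi v(\tau) + \Phi w(\tau) = \Phi T(\tau)x + (x-\Phi T(\tau)x) = x = u(0),
\]
so $u$ solves $\PPhi[f]$. For uniqueness, if $u_1,u_2$ both solve $\PPhi[f]$, then $z:=u_1-u_2$ solves the homogeneous equation $\dot z+\cA(\cdot)z=0$ with $z(0)=\Phi z(\tau)$. Setting $x:=z(0)$ gives $z(\tau)=T(\tau)x$ by definition of $T(\tau)$, hence $x=z(0)=\Phi z(\tau)=\Phi T(\tau)x$, so $(I-\Phi T(\tau))x=0$. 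Injectivity forces $x=0$, and then uniqueness in Theorem~\ref{thm:lions} (applied to $\Pini[0,0]$) yields $z=0$.

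\textbf{Proof of (i)$\Rightarrow$(ii).} Here I would read off injectivity and surjectivity of $I-\Phi T(\tau)$ from the well-posedness. For injectivity, suppose $(I-\Phi T(\tau))x=0$ for some $x\in H$; let $v$ solve $\Pini[0,x]$, so $v(\tau)=T(\tau)x$ and $v(0)=x=\Phi T(\tau)x=\Phi v(\tau)$. Thus $v$ solves $\PPhi[0]$, and since $v\equiv 0$ is also a solution of $\PPhi[0]$, uniqueness gives $v=0$, whence $x=v(0)=0$. For surjectivity, let $y\in H$; I would manufacture an inhomogeneity whose zero-initial-data solution $w$ satisfies $w(\tau)=T(\tau)y$. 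Concretely, pick $w$ to be any element of $\MR(V,V')$ with $w(0)=0$ and $w(\tau)=T(\tau)y$ (for instance $w(t)=\phi(t)\,\tilde v(t)$ with a scalar cutoff $\phi\in C^1[0,\tau]$, $\phi(0)=0$, $\phi(\tau)=1$, and $\tilde v$ the homogeneous solution with $\tilde v(0)=y$), and set $f:=\dot w+\cA(\cdot)w\in L^2(0,\tau;V')$. By~(i) there is a unique $u\in\MR(V,V')$ solving $\PPhi[f]$; put $v:=u-w$, so $\dot v+\cA(\cdot)v=0$, $v(0)=u(0)$ and $v(\tau)=u(\tau)-T(\tau)y$. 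Writing $x:=v(0)$ gives $v(\tau)=T(\tau)x$, and the boundary condition $u(0)=\Phi u(\tau)$ becomes $x=\Phi\bigl(v(\tau)+w(\tau)\bigr)=\Phi T(\tau)x+\Phi T(\tau)y$, i.e.\ $(I-\Phi T(\tau))(x+y)=y$; thus $x+y$ is a preimage of $y$.

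\textbf{The main obstacle} I anticipate is the surjectivity step, where one must realize $T(\tau)y$ as the endpoint of a $\MR(V,V')$-path starting at $0$. Since $T(\tau)y$ lies only in $H$ and not necessarily in the trace space of the homogeneous solutions, the cutoff trick must be arranged carefully; the cleanest route is to take $\tilde v$ the homogeneous solution with $\tilde v(0)=y$ (so $\tilde v\in\MR(V,V')$ and $\tilde v(\tau)=T(\tau)y$), multiply by a smooth scalar $\phi$ vanishing at $0$ and equal to $1$ at $\tau$, and verify that $\phi\tilde v\in\MR(V,V')$ with $f=\tfrac{d}{dt}(\phi\tilde v)+\cA(\cdot)(\phi\tilde v)=\dot\phi\,\tilde v+\phi f_{\tilde v}\in L^2(0,\tau;V')$, which holds since $\tilde v\in L^2(0,\tau;V)\subset L^2(0,\tau;V')$. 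Everything else is the same affine bookkeeping already carried out in the semigroup case.
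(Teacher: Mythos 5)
Your proof is correct and follows essentially the same route as the paper's: Lions' theorem supplies existence and uniqueness for prescribed initial data, the affine decomposition $u=v+w$ handles existence under (ii), and surjectivity under (i) is read off from the identity $(I-\Phi T(\tau))(x+y)=y$. The only (harmless) variation is in producing the auxiliary path $w\in\MR(V,V')$ with $w(0)=0$ and $w(\tau)=T(\tau)y$: you take $w=\phi\tilde v$ with $\tilde v$ the homogeneous solution starting at $y$, whereas the paper takes $w(t)=\phi(t)w_1(\tau-t)$ for some $w_1\in\MR(V,V')$ with $w_1(0)=T(\tau)y$; your choice is, if anything, slightly more economical since it avoids the time reversal and makes the inhomogeneity $f=\dot{\phi}\,\tilde v\in L^2(0,\tau;V)$ explicit.
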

\begin{proof}
\begin{parenum}
\item[(ii)$\Rightarrow$(i).] Let $f\in L^2(0,\tau;V')$. Let $w\in\MR(V,V')$ be such that 
$\dot{w}+\cA(\cdot) w=f$, $w(0)=0$. 
By the assumption there exists $x\in H$ such that $x-\Phi T(\tau)x = \Phi w(\tau)$. 
Let $v\in\MR(V,V')$ be such that $\dot{v}+\cA(\cdot) v=0$, $v(0)=x$. Then $u:=w+v$ is a solution of $\PPhi[f]$. We have shown existence.

In order to prove uniqueness, let $u\in\MR(V,V')$ be such that $\dot{u}+\cA(\cdot) u=0$, $u(0)=\Phi u(\tau)$.  Let $x:=u(0)$. Then $\Phi T(\tau)x=x$ by the definition of $T(\tau)$. Hence $x=0$.
Theorem~\ref{thm:lions} implies that $u=0$.

\item[(i)$\Rightarrow$(ii).] We show that $I-\Phi T(\tau)$ is surjective.
Let $y\in H$. There exists $w\in\MR(V,V')$ such that $w(\tau)=T(\tau)y$ and $w(0)=0$.
In fact, there exists a $w_1\in\MR(V,V')$ such that $w_1(0)=T(\tau)y$.
Let $\phi\in C^\infty[0,\tau]$ be such that $\phi(0)=0$ and $\phi(\tau)=1$. 
Define $w(t)=\phi(t) w_1(\tau-t)$.
Let $f=\dot{w}+\cA(\cdot) w$. By hypothesis there exists a $u\in\MR(V,V')$ such that $\dot{u}+\cA(\cdot) u=f$, $u(0)=\Phi u(\tau)$. Let $x:=u(0)$, $v=u-w\in\MR(V,V')$.
Then $\dot{v}+\cA(\cdot) v=0$, $v(0)=x$. Thus 
\[
    T(\tau)x = v(\tau) = u(\tau)-w(\tau)=u(\tau)-T(\tau)y
\]
and therefore $x=\Phi u(\tau) = \Phi T(\tau)x + \Phi T(\tau)y$.
We have shown that
\[
    (I-\Phi T(\tau))x = \Phi T(\tau)y.
\]
Hence $(I-\Phi T(\tau))(x+y)=y$.
We have proved surjectivity.

To show injectivity, let $x\in H$ be such that $x=\Phi T(\tau)x$.
Let $v$ be the solution of $\Pini[0,x]$. Then $v(\tau)=T(\tau)x$ by definition.
Hence $v$ is a solution of $\PPhi[0]$.
Hence $v=0$, and so $x=0$.\qedhere
\end{parenum}
\end{proof}

\begin{proof}[Proof of Theorem~\ref{thm:4.2}]
Assume that
\[
    \norm{\Phi}_{\Linop(H)} < (1+2\alpha c_H\tau)^{\frac{1}{2}}e^{-\omega\tau}.
\]
Then by Lemma~\ref{lem:4.4}, 
\[
    \norm{\Phi T(\tau)}_{\Linop(H)} < (1+2\alpha c_H\tau)^{\frac{1}{2}}e^{-\omega\tau}\norm{T(\tau)}_{\Linop(H)}\le 1.
\]
Thus, $I-\Phi T(\tau)$ is invertible and the result follows from Theorem~\ref{thm:4.5}.
\end{proof}

\begin{rem}
Using the embedding $V\embed H$, one can exchange some $\alpha$ for $\omega$ in the quasi-coerciveness condition~\ref{en:form-quasi-coercive}.
This allows to further optimize the estimate in Lemma~\ref{lem:4.4} and therefore the required bound in Theorem~\ref{thm:4.2} in dependence of $c_H$, $\alpha$ and $\tau$.

It is a straightforward calculation to check that the following choices are optimal.
\begin{itemize}
\item
If $c_H\ge 1$, it is optimal to put all of $\alpha$ into $\omega$, i.e.\ in Theorem~\ref{thm:4.2} it suffices to assume
\[
    \norm{\Phi}_{\Linop(H)}<e^{(c_H^2\alpha -\omega)\tau}.
\]
\item
If $\frac{1-c_H}{2\alpha c_H^2}\ge\tau$ (which implies $c_H<1$), it is optimal to keep all of $\alpha$. So the bound in Theorem~\ref{thm:4.2} remains the same.

\item
Finally, if $c_H<1$ and $\frac{1-c_H}{2\alpha c_H^2}<\tau$, the optimized bound in Theorem~\ref{thm:4.2} takes the form
\[
    \norm{\Phi}_{\Linop(H)} < \frac{1}{\sqrt{c_H}}e^{c_H^2\alpha\tau - (1-c_H)/2 - \omega\tau}.
\]
\end{itemize}
\end{rem}

Actually, Theorem~\ref{thm:4.2} remains true if $\Phi\colon H\to H$ is nonlinear and Lipschitz continuous.
We state this as a new result.
\begin{thm}
Let $\Phi\colon H\to H$ be a map satisfying 
\[
    \norm{\Phi(x)-\Phi(y)}_H\le L\norm{x-y}_H
\]
for all $x,y\in H$ where $0<L<(1+2\alpha c_H\tau)^{\frac{1}{2}}e^{-\omega\tau}$.
Then for all $f\in L^2(0,\tau;V')$ there exists a unique $u\in\MR(V,V')$ solving
\[
    \PPhi[f]\quad\left\{\begin{aligned}
        &\dot{u}+\cA(\cdot) u = f,\\
        &u(0) = \Phi(u(\tau)).
    \end{aligned}\right. 
\]
\end{thm}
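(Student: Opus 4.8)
The plan is to recast $\PPhi[f]$ as a fixed-point equation in $H$ and then apply the Banach fixed-point theorem, reusing the affine decomposition from the proof of Theorem~\ref{thm:4.5}. First I would record from Lemma~\ref{lem:4.4} the bound $\norm{T(\tau)}_{\Linop(H)}\le(1+2\alpha c_H\tau)^{-1/2}e^{\omega\tau}$; combined with the hypothesis $L<(1+2\alpha c_H\tau)^{1/2}e^{-\omega\tau}$ this yields the crucial estimate $L\,\norm{T(\tau)}_{\Linop(H)}<1$, which is exactly the product shown to be $<1$ in the proof of Theorem~\ref{thm:4.2}.

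Fix $f\in L^2(0,\tau;V')$. By Theorem~\ref{thm:lions} there is a unique $w\in\MR(V,V')$ solving $\Pini[f,0]$, i.e.\ $\dot w+\cA(\cdot)w=f$ with $w(0)=0$. For $x\in H$, let $v_x\in\MR(V,V')$ be the unique solution of $\Pini[0,x]$; then $v_x(\tau)=T(\tau)x$ by the definition of $T(\tau)$, and by linearity $u_x:=w+v_x$ is the unique element of $\MR(V,V')$ with $\dot u_x+\cA(\cdot)u_x=f$ and $u_x(0)=x$. In particular $u_x(\tau)=w(\tau)+T(\tau)x$.

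The key observation is that a function $u\in\MR(V,V')$ solves $\PPhi[f]$ if and only if $u=u_x$ for $x:=u(0)$ and $x$ satisfies the boundary identity $x=\Phi(u_x(\tau))$, that is,
\[
    x=\Phi\bigl(w(\tau)+T(\tau)x\bigr).
\]
Thus solutions of $\PPhi[f]$ correspond bijectively to fixed points of the map $\Psi\colon H\to H$ defined by $\Psi(x):=\Phi(w(\tau)+T(\tau)x)$. Since $\Phi$ is $L$-Lipschitz and $T(\tau)$ is linear,
\[
    \norm{\Psi(x)-\Psi(y)}_H\le L\,\norm{T(\tau)(x-y)}_H\le L\,\norm{T(\tau)}_{\Linop(H)}\,\norm{x-y}_H,
\]
and the contraction constant $L\,\norm{T(\tau)}_{\Linop(H)}<1$ makes $\Psi$ a strict contraction on the complete space $H$. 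The Banach fixed-point theorem then produces a unique $x^\ast\in H$ with $\Psi(x^\ast)=x^\ast$, and $u:=u_{x^\ast}$ is the unique solution of $\PPhi[f]$.

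I do not expect a serious obstacle: the only genuinely new feature compared with the linear case is that the invertibility of $I-\Phi T(\tau)$ used in Theorem~\ref{thm:4.2} (via Theorem~\ref{thm:4.5}) is replaced by a contraction argument. This is available precisely because the fixed-point equation retains an affine structure in $x$ even when $\Phi$ is merely Lipschitz rather than linear, so the hypothesis on the Lipschitz constant $L$ enters in exactly the same way as the norm bound on $\Phi$ did before.
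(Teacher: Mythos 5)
Your proposal is correct and takes essentially the same route as the paper: your map $\Psi(x)=\Phi\bigl(w(\tau)+T(\tau)x\bigr)$ is exactly the paper's fixed-point map $Sx=\Phi(u(\tau))$ (with $u$ the solution of $\Pini[f,x]$), since by linearity $u=w+v_x$ and $u(\tau)=w(\tau)+T(\tau)x$. Both arguments derive the contraction constant $L\,\norm{T(\tau)}_{\Linop(H)}<1$ from Lemma~\ref{lem:4.4} and conclude via Banach's fixed-point theorem, with uniqueness of the solution following from the uniqueness of the fixed point together with Theorem~\ref{thm:lions}.
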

\begin{proof}
Fix $f\in L^2(0,\tau;V')$. For $x\in H$, let $Sx:=\Phi(u(\tau))$, where $u\in\MR(V,V')$ is the solution of $\dot{u}+\cA(\cdot) u=f$, $u(0)=x$. If $y\in H$ is a further initial value and $w\in\MR(V,V')$ the solution of $\dot{w}+\cA(\cdot) w=f$, $w(0)=y$, then $v:=u-w\in\MR(V,V')$, $\dot{v}+\cA(\cdot) v=0$ and $v(0)=x-y$.
Thus $v(\tau)=T(\tau)(x-y)$. By Lemma~\ref{lem:4.4},
\[
    \norm{v(\tau)}_H\le\frac{1}{(1+2\alpha c_H\tau)^{1/2}}e^{\omega\tau}\norm{x-y}_H.
\]
Moreover,
\begin{align*}
    \norm{Sx-Sy}_H &= \norm{\Phi(u(\tau)) - \Phi(w(\tau))}_H \\
        &\le L\norm{u(\tau)-w(\tau)}_H = L\norm{v(\tau)}_H \\
        &\le \frac{Le^{\omega\tau}}{(1+2\alpha c_H\tau)^{1/2}}\norm{x-y}_H.
\end{align*}
By Banach's fixed point argument, there exists a unique $x\in H$ such that $Sx=x$.
Let $u\in\MR(V,V')$ be such that $\dot{u}+\cA(\cdot) u =f$, $u(0)=x$. 
Then $u(0)=x=Sx=\Phi(u(\tau))$. 

In order to show uniqueness, let $u_1,u_2\in\MR(V,V')$ be two solutions of $\PPhi[f]$.
Then $Su_j(0)=u_j(0)$ for $j=1,2$. Hence $u_1(0)=u_2(0)$.
Let $v=u_1-u_2$. Then $v\in\MR(V,V')$, $\dot{v} + \cA(\cdot) v=0$, $v(0)=0$. It follows from Theorem~\ref{thm:lions} that $v=0$.
\end{proof}

\begin{rem}
If in addition to the assumptions of this section the map $\cA\colon [0,\tau]\to\Linop(V,V')$ is continuous, then we can apply Theorem~3.3 with $X=V'$ and $D=V$.
Then for $1<p<\infty$, $\tr_p=(V',V)_{\frac{1}{p'},p}$ and given a linear $\Phi\colon\tr_p\to\tr_p$ such that $I-\Phi T(\tau)\colon \tr_p\to\tr_p$ is bijective, for all $f\in L^p(0,\tau;V')$ there exists a unique $u\in\MR_p$ such that $\dot{u}+\cA(\cdot)u=f$, $u(0)=\Phi u(\tau)$.

If $\cA \colon [0,\tau]\to\Linop(V,V')$ is not continuous, this result may fail for any $p\in(1,\infty)\setminus\{2\}$ even for $\Phi=0$. In fact, 
a recent result by Bechtle, Mooney and Veraar~\cite[Theorem~2.5]{BMV24} shows 
this even if $\cA(t)$ is an elliptic operator of second order.
\end{rem}

Finally, we mention a semigroup property.
\begin{thm}\label{thm:5.8}
Assume that $\omega=0$; i.e., the form $\form{a}\colon [0,\tau]\times V\times V \to\KK$ is coercive.
Let $\Phi\in\Linop(H)$ be such that $\norm{\Phi}_{\Linop(H)}\le 1$.

Define the operator $C$ in $\cH := L^2(0,\tau;H)$ by
\[
    \begin{aligned}
    D(C) &:= \{u\in\MR(V,V') : \dot{u} + \cA(\cdot) u\in\cH,\ u(0)=\Phi u(\tau)\}, \\
    C u &:= \dot{u} + \cA(\cdot)u.
    \end{aligned}
\]
Then $C$ is m-accretive.
\end{thm}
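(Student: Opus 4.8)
We need to show $C$ is m-accretive in $\cH = L^2(0,\tau;H)$. By definition, m-accretive means two things: (1) $C$ is accretive, i.e. $\Re\scalar{Cu}{u}_{\cH} \ge 0$ for all $u \in D(C)$, and (2) $C$ is maximal, which for accretive operators is equivalent to $\lambda + C$ being surjective for some (hence all) $\lambda > 0$.

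**Accretivity.** For $u \in D(C)$, compute
$\Re\scalar{Cu}{u}_{\cH} = \Re\int_0^\tau \langle \dot u(t) + \cA(t)u(t), u(t)\rangle \,dt$.
Split into two pieces. The form part $\int_0^\tau \Re\form{a}(t,u(t),u(t))\,dt \ge \alpha\int_0^\tau\|u(t)\|_V^2\,dt \ge 0$ by coercivity (here $\omega = 0$). For the derivative part, the integration-by-parts formula (Lemma~\ref{lem:ibp}) with $v=w=u$ gives $\int_0^\tau \Re\langle\dot u(t),u(t)\rangle\,dt = \tfrac12(\|u(\tau)\|_H^2 - \|u(0)\|_H^2)$. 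Now invoke the boundary condition $u(0)=\Phi u(\tau)$ together with $\|\Phi\|_{\Linop(H)}\le 1$: this yields $\|u(0)\|_H \le \|u(\tau)\|_H$, so $\|u(\tau)\|_H^2 - \|u(0)\|_H^2 \ge 0$. Both pieces are nonnegative, so $C$ is accretive.

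**Maximality via surjectivity.** The plan is to show $\lambda + C$ is surjective for every $\lambda > 0$; equivalently, given $f\in\cH = L^2(0,\tau;H)\subset L^2(0,\tau;V')$, solve $\dot u + (\cA(\cdot)+\lambda)u = f$ with $u(0)=\Phi u(\tau)$ and $u\in D(C)$. Here I would apply Theorem~\ref{thm:4.5} to the shifted form $\form{a}_\lambda := \form{a} + \lambda\scalar{\cdot}{\cdot}_H$, which is still coercive (indeed $\Re\form{a}_\lambda(t,v,v)\ge\alpha\|v\|_V^2 + \lambda\|v\|_H^2$). The associated solution operator $T_\lambda(\tau)$ satisfies, by Lemma~\ref{lem:4.4} applied to $\form{a}_\lambda$ with its new parameters, a norm bound $\|T_\lambda(\tau)\|_{\Linop(H)} < 1$ (the extra $\lambda\|v\|_H^2$ term only improves the decay, so the bound is strict). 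Since $\|\Phi\|_{\Linop(H)}\le 1$, we get $\|\Phi T_\lambda(\tau)\|_{\Linop(H)} < 1$, hence $I - \Phi T_\lambda(\tau)$ is invertible on $H$. By Theorem~\ref{thm:4.5}, problem $\PPhi$ for the form $\form{a}_\lambda$ is well-posed, producing a unique $u\in\MR(V,V')$ with $\dot u + \cA(\cdot)u + \lambda u = f$ and $u(0)=\Phi u(\tau)$. Since $f\in\cH$ and $\lambda u\in\cH$, we have $\dot u + \cA(\cdot)u = f - \lambda u\in\cH$, so $u\in D(C)$ and $(\lambda + C)u = f$. This gives surjectivity of $\lambda+C$.

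**Main obstacle.** The routine accretivity computation is clean; the delicate point is confirming the \emph{strict} inequality $\|T_\lambda(\tau)\|_{\Linop(H)} < 1$ so that $\|\Phi T_\lambda(\tau)\| < 1$ even when $\|\Phi\| = 1$. This is where the shift by $\lambda > 0$ is essential: for $\lambda = 0$ one would only get $\|T(\tau)\|\le 1$ and the fixed-point/invertibility argument could fail at the boundary $\|\Phi\|=1$, whereas the strict coercivity gain from the $\lambda\|v\|_H^2$ term in Lemma~\ref{lem:4.4} (the factor $(1+2\alpha c_H\tau)^{-1/2}$ is already $<1$, and shifting can only strengthen it) secures strict contractivity of $\Phi T_\lambda(\tau)$. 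I would make sure to apply Lemma~\ref{lem:4.4} with the correct coercivity constant for $\form{a}_\lambda$ and note that the resulting bound is strictly less than $1$, which is exactly what the surjectivity argument requires.
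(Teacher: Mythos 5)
Your proof is correct, and the accretivity half is verbatim the paper's argument. For maximality, however, you take a different route: the paper simply notes that with $\omega=0$ the hypothesis of Theorem~\ref{thm:4.2} reads $\norm{\Phi}_{\Linop(H)}<(1+2\alpha c_H\tau)^{1/2}$, which is automatic for $\norm{\Phi}_{\Linop(H)}\le 1$ since $(1+2\alpha c_H\tau)^{1/2}>1$; well-posedness plus the closed-graph estimate then give $0\in\rho(C)$, and openness of the resolvent set yields surjectivity of $C+\lambda$ for small $\lambda>0$, which suffices for m-accretivity. You instead shift the form to $\form{a}_\lambda=\form{a}+\lambda\scalar{\cdot}{\cdot}_H$ and prove surjectivity of $\lambda+C$ for \emph{every} $\lambda>0$ directly via Lemma~\ref{lem:4.4} and Theorem~\ref{thm:4.5}; this is valid (the shifted form is coercive with the same $\alpha$ and $\omega=0$, so $\norm{T_\lambda(\tau)}_{\Linop(H)}\le(1+2\alpha c_H\tau)^{-1/2}<1$, and $u=\big((\lambda+C)\big)^{-1}f\in D(C)$ since $\lambda u\in\cH$), and it buys you a self-contained range condition at all $\lambda>0$ without invoking openness of $\rho(C)$. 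One remark in your closing paragraph is off, though it does not damage the proof: you assert that at $\lambda=0$ one would ``only get $\norm{T(\tau)}\le 1$'' and the argument ``could fail at the boundary $\norm{\Phi}=1$.'' In fact Lemma~\ref{lem:4.4} with $\omega=0$ already gives the \emph{strict} bound $\norm{T(\tau)}_{\Linop(H)}\le(1+2\alpha c_H\tau)^{-1/2}<1$ for the unshifted coercive problem --- that strict gain comes from coercivity itself, not from the shift --- and this is precisely what the paper exploits to conclude $0\in\rho(C)$ without any shifting. So your $\lambda$-shift is a harmless redundancy rather than the essential point you believed it to be.
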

\begin{proof}
\begin{parenum}
\item $C$ is accretive. Let $u\in D(C)$. Then 
\begin{align*}
    \Re\scalar{Cu}{u}_{\cH} &= \Re\int_0^\tau \scalar{ (Cu)(t)}{u(t)}_H\dx[t] \\
    &=\Re\int_0^\tau \langle (Cu)(t),u(t)\rangle_{V',V} \dx[t] \\
    &=\Re\int_0^\tau\langle \dot{u}(t),u(t)\rangle_{V',V}\dx[t] + \Re\int_0^\tau \form{a}(t,u(t),u(t))\dx[t] \\
    &\ge \frac{1}{2}\norm{u(\tau)}_H^2 - \frac{1}{2}\norm{u(0)}_H^2 \\
    &=\frac{1}{2}\Bigl(\norm{u(\tau)}_H^2 - \norm{\Phi u(\tau)}_H^2\Bigr) \ge 0.
\end{align*}
\item Theorem~\ref{thm:4.2} implies that $0\in\rho(C)$ (the resolvent set of $C$).
Since $\rho(C)$ is open, $C+\lambda I$ is surjective for $\lambda>0$ small enough.
This implies that $C$ is m-accretive.
\qedhere
\end{parenum}
\end{proof}

\begin{cor}\label{cor:5n.10}
Let $\Phi\in\Linop(H)$ be such that $\norm{\Phi}_{\Linop(H)}\le 1$. 
Define the operator $B$ in $\cH=L^2(0,\tau;H)$ by
\[
    \begin{aligned}
    D(B) &:= \{u\in H^1(0,\tau;H) : u(0)=\Phi u(\tau)\},\\
    B u &:= u'.
    \end{aligned}
\]
Then $B$ is m-accretive.
\end{cor}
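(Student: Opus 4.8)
The plan is to obtain this as a special case of Theorem~\ref{thm:5.8}, applied to a degenerate but admissible Gelfand triple. I would take $V=H$ (so that $V'\cong H$ and the embedding constant is $c_H=1$) together with the form $\form{a}(t,v,w)=\scalar{v}{w}_H$. This form is continuous, constant in $t$, and coercive with $\alpha=1$ and $\omega=0$, so it satisfies all the standing assumptions of this section; its associated operator on $V'$ is $\cA(t)=I$, and $\MR(V,V')=H^1(0,\tau;H)$.

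With these choices the operator $C$ of Theorem~\ref{thm:5.8} reads $Cu=\dot{u}+u$ on the domain $\{u\in H^1(0,\tau;H):u(0)=\Phi u(\tau)\}$, which is precisely $D(B)$; hence $C=B+I$. Since $\norm{\Phi}_{\Linop(H)}\le 1$ and $\omega=0$, the hypotheses of Theorem~\ref{thm:5.8} hold, and it yields that $C=B+I$ is m-accretive. (The invocation of Theorem~\ref{thm:4.2} inside that proof is legitimate here, as $\norm{\Phi}_{\Linop(H)}\le 1<(1+2\tau)^{1/2}$.)

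It then remains to pass from $B+I$ to $B$. Accretivity of $B$ I would verify directly: by the integration-by-parts formula (Lemma~\ref{lem:ibp}), $\Re\scalar{Bu}{u}_{\cH}=\tfrac{1}{2}\bigl(\norm{u(\tau)}_H^2-\norm{u(0)}_H^2\bigr)$ for $u\in D(B)$, and since $u(0)=\Phi u(\tau)$ with $\norm{\Phi}_{\Linop(H)}\le 1$ we have $\norm{u(0)}_H\le\norm{u(\tau)}_H$, whence $\Re\scalar{Bu}{u}_{\cH}\ge 0$. For surjectivity, m-accretivity of $C$ gives that $\mu+C$ is bijective for every $\mu>0$; writing $\lambda+B=(\lambda-1)+C$ then shows that $\lambda+B$ is surjective for every $\lambda>1$, in particular for $\lambda=2$. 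An accretive operator for which $\lambda+B$ is surjective for some $\lambda>0$ is m-accretive, which completes the argument.

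I expect no genuine difficulty here; the only points needing care are the verification that the degenerate triple $V=H=V'$ really meets every hypothesis of the section and the bookkeeping in the transfer from $C$ to $B$. Should one prefer a self-contained argument avoiding Theorem~\ref{thm:5.8}, surjectivity of $\lambda+B$ can also be proved by hand: solving the linear ODE $\dot{u}+\lambda u=f$ explicitly and imposing $u(0)=\Phi u(\tau)$ reduces the problem to the invertibility of $I-e^{-\lambda\tau}\Phi$ on $H$, which holds for every $\lambda>0$ because $\norm{e^{-\lambda\tau}\Phi}_{\Linop(H)}=e^{-\lambda\tau}\norm{\Phi}_{\Linop(H)}<1$.
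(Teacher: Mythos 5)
Your proof is correct and takes essentially the same route as the paper: the paper likewise derives the corollary from Theorem~\ref{thm:5.8} by inserting the form $\form{a}(t,v,w)=\lambda\scalar{v}{w}_H$ (there for every $\lambda>0$, where you instead fix $\lambda=1$ and pass from $C=B+I$ to $B$ via the standard fact that an accretive operator with $\lambda_0+B$ surjective for one $\lambda_0>0$ is m-accretive), and accretivity of $B$ rests on the same integration-by-parts computation from the proof of Theorem~\ref{thm:5.8}. Your explicit check that the degenerate choice $V=H$ satisfies all standing hypotheses of the section is a detail the paper leaves implicit, but it is not a different argument.
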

\begin{proof}
Accretivity follows from the proof of Theorem~\ref{thm:5.8}.
If $\lambda>0$, then $B+\lambda I$ is surjective.
This follows from Theorem~\ref{thm:5.8} by considering $\form{a}(t,v,w) := \lambda \scalar{v}{w}_H$ for $v,w\in V$ and $t\in [0,\tau]$.
\end{proof}

Let $\cV:= L^2(0,\tau;V)$ and $\cH := L^2(0,\tau;H)$.
Then $\cV\dembed\cH\dembed\cV'$ is a Gelfand triple where $\cV'=L^2(0,\tau;V')$.
The form $\widetilde{\form{a}}\colon \cV\times\cV\to\KK$ given by $\widetilde{\form{a}}(v,w) = \int_0^\tau\form{a}(t,v(t),w(t))\dx[t]$ is continuous and coercive if we assume that $\omega=0$, what we do in the remainder of this section.
Let $\widetilde{\cA}\colon\cV\to\cV'$ be given by $\langle \widetilde{\cA}v,w\rangle = \widetilde{\form{a}}(v,w)$.
Then $(\widetilde{\cA}v)(t) = \cA(t)v(t)$~a.e.\ for all $v\in\cV$.
The operator $\widetilde{\cA}$ is invertible since $\Re\langle\widetilde{\cA}v,v\rangle \ge\alpha\norm{v}_{\cV}^2$ for all $v\in V$.
Let $\widetilde{A}$ be the part of $\widetilde{\cA}$ in $\cH$. Then $\widetilde{A}$ is m-accretive and invertible.
In fact, $-\widetilde{A}$ generates a holomorphic $C_0$-semigroup on $\cH$.
By definition,
\[
    \begin{aligned}
    D(\widetilde{A}) &= \{v\in\cH : \text{$v(t)\in D(A(t))$ a.e.\ and $t\mapsto A(t)v(t)\in\cH$}\},\\
    (\widetilde{A}v)(t) &= A(t)v(t)\text{ a.e.}
    \end{aligned}    
\]
So with $B$ and $C$ as in Theorem~\ref{thm:5.8} and Corollary~\ref{cor:5n.10}, one has $B+\widetilde{A}\subset C$ (i.e., $D(B)\cap D(\widetilde{A})\subset D(C)$ and $Cu=Bu+\widetilde{A}u$ for $u\in B(B)\cap D(\widetilde{A})$),
but it may happen that $D(C)\supsetneq D(B)\cap D(\widetilde{A})$.
If $D(C)=D(B)\cap D(\widetilde{A})$, we speak of \emphdef{maximal regularity in $H$}. We will investigate this property in the next section.

The investigation of conditions implying that the sum of two sectorial operators is closed goes back to the seminal paper~\cite{DPG75} of Da~Prato and Grisvard. 
It is a guideline in the theory of maximal regularity.
The theorem of Dore and Venni~\cite{DV87} gives a very transparent condition in the commutative case.
For a non-commutative situation as we have it here, see the paper by Monniaux and Prüss~\cite{MP97}.
In the latter paper, in addition to the condition of bounded imaginary powers, a commutator condition is imposed.

\section{Maximal regularity in \texorpdfstring{$H$}{H}}\label{sec:6}

Lions' theorem (Theorem~\ref{thm:lions}) gives a well-posedness result under very general assumptions, even with maximal regularity in $L^2(0,\tau;V')$: The terms $\dot{u}$, $\cA(\cdot)u$ and $f$ are all three in $L^2(0,\tau;V')$.
However, as pointed out in Section~\ref{sec:3}, in general the operator $\cA(t)$ by itself does not incorporate the boundary conditions, and its part $A(t)$ in $H$ is much more interesting.
Unfortunately, for these operators maximal regularity in $H$ fails in general.
It is merely valid under additional assumptions, see the survey~\cite{ADF17}.
This means, in the situation of Theorem~\ref{thm:lions}, there may exist $f\in L^2(0,\tau;H)$ such that the solution $u$ of $\Pini[f,0]$ is not in $H^1(0,\tau;H)$ and consequently, the property that $u(t)\in D(A(t))$~a.e.\ and $u\in L^2(0,\tau;D(A))$ is violated.
By a result of Fackler~\cite{Fac17}, this can happen even if the function $\cA\colon [0,\tau]\to\Linop(V,V')$
is continuous and $\form{a}(t,\cdot,\cdot)$ is symmetric.
Even though the form domain is constant, the domains $D(A(t))$ are time-dependent, in general.
For this reason we consider more restrictive assumptions on the form $\form{a}$ than in Section~\ref{sec:4}.

We start by a known positive result for the initial value case, Theorem~\ref{thm:5.1}\,(b).
Under the hypotheses made in Theorem~\ref{thm:5.1} and following the same strategy as in Section~\ref{sec:4}, we are able to characterize maximal regularity in $H$ for the generalized boundary condition $u(0)=\Phi u(\tau)$.

In this section we make the following assumptions. $V$ and $H$ are Hilbert spaces over $\KK=\RR$ or $\CC$ such that $V\dembed H$. Let $c_H>0$ be such that $c_H\norm{v}_H\le\norm{v}_V$ for all $v\in V$. We identify $H$ with a subspace of $V'$ in the usual way.
Let $\form{a}_1,\form{a}_2\colon[0,\tau]\times V\times V\to\KK$ be non-autonomous forms; i.e., $\form{a}_1,\form{a}_2$ satisfy~\ref{en:form-sesqui},~\ref{en:form-meas},~\ref{en:form-cont} from Section~\ref{sec:4}.

In addition we assume that
\begin{alenum}[resume]
\item $\form{a}_1(t,v,w) = \conj{\form{a}_1(t,w,v)}$ for all $w,v\in V$ and $t\in[0,\tau]$\quad (symmetry);

\item there exists an $\alpha>0$ such that $\form{a}_1(t,v,v)\ge \alpha\norm{v}_V^2$ for all $v\in V$ and $t\in[0,\tau]$\quad (coerciveness);

\item there exists an $L\ge 0$ such that $\abs{\form{a}_1(t,v,w)-\form{a}_1(s,v,w)}\le L\abs{t-s}\cdot\norm{v}_V\norm{w}_V$ for all $v,w\in V$ and $t,s\in[0,\tau]$\quad(Lipschitz continuity);

\item there exists an $M\ge 0$ such that $\abs{\form{a}_2(t,v,w)}\le M\norm{v}_V\norm{w}_H$ for all $v,w\in V$ and $t\in[0,\tau]$.
\end{alenum}

We define $\form{a}(t,v,w) = \form{a}_1(t,v,w)+\form{a}_2(t,v,w)$ for all $v,w\in V$ and $t\in[0,\tau]$, and the operator $\cA(t)\in\Linop(V,V')$ is given by $\langle\cA(t) v,w\rangle_{V',V} = \form{a}(t,v,w)$.

It is easy to see that the form $\form{a}$ is \emphdef{quasi-coercive}; i.e., there exist $\omega\in\RR$ and $\alpha>0$ such that 
\[
    \Re\form{a}(t,v,v)+\omega\norm{v}_H^2\ge\alpha\norm{v}_V^2
    \quad\text{for all $v\in V$ and $t\in[0,\tau]$.}
\]
Thus Theorem~\ref{thm:lions} applies.
But under the more restrictive hypotheses of this section we have even maximal regularity in $H$. By this we mean the following.
Let $\MR(V,H) := H^1(0,\tau;H)\cap L^2(0,\tau;V)$. Then by~\cite[Corollary~3.3 and Theorem~4.2]{ADLO14} the following result is true.

\begin{thm}\leavevmode\label{thm:5.1}
\begin{alenum}
\item If $u\in\MR(V,H)$ satisfies $\cA(\cdot)u \in L^2(0,\tau;H)$, then $u\in C([0,\tau];V)$.
\item For all $f\in L^2(0,\tau; H)$ and all $x\in V$ there exists a unique $u\in\MR(V,H)$ such that
\[
    \left\{
    \begin{aligned}
    & \dot{u}(t)+\cA(t)u(t)=f(t)\quad\text{a.e. and}\\
    & u(0)=x.
    \end{aligned}\right.
\]
\end{alenum}
\end{thm}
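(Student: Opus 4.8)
The plan is to base both parts on a single a priori estimate obtained by testing the equation with $\dot u$, exploiting the symmetry of $\form{a}_1$ to rewrite the leading form term as a total time derivative. For part~(b), existence then follows from a Galerkin scheme that carries this estimate uniformly, while uniqueness is immediate: a solution in $\MR(V,H)$ is in particular a solution in $\MR(V,V')$, so Theorem~\ref{thm:lions} already guarantees it is unique. Part~(a) comes out of the same identity, which forces $t\mapsto\form{a}_1(t,u(t),u(t))$ to be continuous.

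The core computation is the following. Pairing $\dot u+\cA(\cdot)u=f$ with $\dot u$ and splitting $\form{a}=\form{a}_1+\form{a}_2$ gives
\[
    \norm{\dot u(t)}_H^2 + \Re\form{a}_1(t,u(t),\dot u(t)) + \Re\form{a}_2(t,u(t),\dot u(t)) = \Re\scalar{f(t)}{\dot u(t)}_H.
\]
By symmetry of $\form{a}_1$ one has $\Re\form{a}_1(t,u(t),\dot u(t)) = \tfrac{1}{2}\frac{d}{dt}\form{a}_1(t,u(t),u(t)) - \tfrac{1}{2}\dot{\form{a}}_1(t,u(t),u(t))$, where $\dot{\form{a}}_1$ is the a.e.-defined time derivative of the form; the Lipschitz hypothesis bounds it by $\abs{\dot{\form{a}}_1(t,v,v)}\le L\norm{v}_V^2$. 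The lower-order term obeys $\abs{\form{a}_2(t,u,\dot u)}\le M\norm{u}_V\norm{\dot u}_H$, so it and the right-hand side are absorbed by Young's inequality into $\tfrac{1}{2}\norm{\dot u}_H^2$ plus multiples of $\norm{u}_V^2$ and $\norm{f}_H^2$. Integrating over $[0,t]$ and using coercivity $\form{a}_1(t,u(t),u(t))\ge\alpha\norm{u(t)}_V^2\ge 0$ together with the $L^2(0,\tau;V)$ bound on $u$ furnished by Theorem~\ref{thm:lions}, a Gronwall argument yields
\[
    \int_0^\tau\norm{\dot u}_H^2\dx[t] + \sup_{t\in[0,\tau]}\form{a}_1(t,u(t),u(t)) \le C\bigl(\norm{f}_{L^2(0,\tau;H)}^2 + \norm{x}_V^2\bigr),
\]
so in particular $u\in H^1(0,\tau;H)\cap L^\infty(0,\tau;V)$.

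To realise this rigorously and produce the solution I would run a Galerkin approximation in the eigenbasis of the self-adjoint positive operator associated with the symmetric coercive form $\form{a}_1(0,\cdot,\cdot)$, which is orthonormal in $H$ and orthogonal in $V$. The approximants $u_n$ solve genuine finite-dimensional ODE systems, so $\dot u_n$ takes values in $V$ and the computation above is fully legitimate, with the resulting bound uniform in $n$; passing to weak limits produces $u\in\MR(V,H)$ solving the equation, and the uniqueness observation identifies it. I expect the main obstacle to be exactly this passage to the limit: since $\dot u$ lies only in $L^2(0,\tau;H)$ and not in $L^2(0,\tau;V)$, the pairing $\form{a}(t,u,\dot u)$ and the chain rule $\frac{d}{dt}\form{a}_1(t,u(t),u(t)) = \dot{\form{a}}_1(t,u(t),u(t)) + 2\Re\form{a}_1(t,\dot u(t),u(t))$ are not directly available in the limit, so the estimate must be derived purely at the finite-dimensional level and only the inequality passed through, while the chain rule for the time-dependent Lipschitz form has to be justified separately by a difference-quotient argument.

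Finally, for part~(a) I set $f:=\dot u+\cA(\cdot)u\in L^2(0,\tau;H)$, so that $u$ solves the equation. To license the energy identity without the approximation above I would mollify $u$ in time, obtaining $u_\eps$ with $\dot u_\eps$ valued in $V$, write the identity for $u_\eps$, and let $\eps\to0$; this shows $t\mapsto\form{a}_1(t,u(t),u(t))$ is absolutely continuous, hence continuous. Since $u\in H^1(0,\tau;H)\cap L^\infty(0,\tau;V)$ is weakly continuous into $V$, and $\form{a}_1(t,\cdot,\cdot)^{1/2}$ is a $t$-continuously-varying equivalent norm on $V$, continuity of $\form{a}_1(t,u(t),u(t))$ upgrades weak continuity to strong continuity, giving $u\in C([0,\tau];V)$.
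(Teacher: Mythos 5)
The paper does not prove Theorem~\ref{thm:5.1} at all: both parts are imported from \cite{ADLO14} (Corollary~3.3 and Theorem~4.2 there), so your proposal is not competing with an in-paper argument but with the cited literature. What you sketch --- testing with $\dot u$, using symmetry plus Lipschitz continuity of $\form{a}_1$ to turn the leading term into a total time derivative, absorbing $\form{a}_2$ through its $V\times H$-bound and Young's inequality, running a Galerkin scheme that carries the estimate, and upgrading weak $V$-continuity to strong via continuity of $t\mapsto\form{a}_1(t,u(t),u(t))$ --- is the classical Lions-type energy route, and it is essentially the mechanism underlying the cited result. Your reduction of uniqueness to Theorem~\ref{thm:lions} is exactly right, and you correctly flag the two genuine technical dangers: the chain rule for a form that is merely Lipschitz in time, and the fact that only the integrated inequality, not the pointwise identity, survives the passage to the limit.

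Two steps need repair as written. First, your Galerkin basis need not exist: the self-adjoint operator associated with $\form{a}_1(0,\cdot,\cdot)$ has an orthonormal basis of $H$ consisting of eigenvectors only when it has compact resolvent, and the embedding $V\embed H$ is \emph{not} assumed compact in this section (compactness is only added later, in Theorem~\ref{thm:5.5}). The fix is standard: take any increasing sequence of finite-dimensional subspaces $V_n$ with dense union --- which tacitly requires $V$ separable, an assumption the theorem does not make, so you should either add a reduction or note that the cited proof avoids Galerkin altogether --- and project the initial datum orthogonally with respect to the equivalent inner product $\form{a}_1(0,\cdot,\cdot)$ on $V$, so that $\norm{u_n(0)}_V$ is controlled and $\form{a}_1(0,u_n(0),u_n(0))\to\form{a}_1(0,x,x)$. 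Second, in part~(a) the quantity $\form{a}_1(t,u(t),\dot u(t))$ is not defined as you wrote it, since $\dot u(t)$ lies only in $H$ while $\form{a}_1(t,\cdot,\cdot)$ acts on $V\times V$; it must be read as $\scalar{\cA(t)u(t)}{\dot u(t)}_H-\form{a}_2(t,u(t),\dot u(t))$, which is integrable precisely because of the hypothesis $\cA(\cdot)u\in L^2(0,\tau;H)$ together with the $V\times H$-bound on $\form{a}_2$ --- this is exactly where that hypothesis enters, and your mollification argument has to be organized around this rewritten identity. Note also that in part~(a) you only know $u\in L^2(0,\tau;V)$ a priori, so the $L^\infty(0,\tau;V)$ bound needed for weak $V$-continuity must itself come out of that identity rather than be assumed. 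With these repairs your outline is a viable self-contained proof of the result the paper quotes.
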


Since in the situation of Theorem~\ref{thm:5.1}\,(b) one has $\cA(t)u(t)=f(t)-\dot{u}(t)\in H$ a.e.,
we may rewrite the equation as $u(t)\in D(A(t))$ a.e.\ and $\dot{u}(t)+A(t)u(t)=f(t)$ a.e.,
where $A(t)$ denotes the part of $\cA(t)$ in $H$.
We also mention that for each $t\in[0,\tau]$, the form $\form{a}(t,\cdot,\cdot)=\form{a}_1(t,\cdot,\cdot) + \form{a}_2(t,\cdot,\cdot)$ satisfies the square root property; i.e., one has
\[
    V = D\bigl((A(t)+\omega)^{1/2}\bigr).
\]
So things are consistent with our comments in Section~\ref{sec:3}.

Now we let $\Phi\colon V\to H$ be linear and we consider the boundary condition $u(0)=\Phi u(\tau)$.
Using the technique of Section~\ref{sec:4}, we can prove the following result on well-posedness in $H$.
We define by $S(\tau)\colon V\to V$ the linear mapping $S(\tau)x=v(\tau)$, where for $x\in V$, $v\in\MR(V,H)$ is the unique solution of the homogeneous problem $\dot{v}+\cA(\cdot) v=0$, $v(0)=x$.
Thus $S(\tau)=\restrict{T(\tau)}{V}$, where $T(\tau)$ has been defined in Section~\ref{sec:4}.

\begin{thm}\label{thm:5.2}
Let $\Phi\colon V\to V$ be linear. The following statements are equivalent.
\begin{romanenum}
\item $I-\Phi S(\tau)\colon V\to V$ is bijective.
\item For all $f\in L^2(0,\tau;H)$ there exists a unique $u\in\MR(V,H)$ such that
\[
    \left\{
    \begin{aligned}
    & \dot{u}(t)+\cA(t)u(t)=f(t)\quad\text{a.e. and}\\
    & u(0)=\Phi u(\tau).
    \end{aligned}\right.
\]
\end{romanenum}
\end{thm}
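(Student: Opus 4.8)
The plan is to mirror the proof of Theorem~\ref{thm:4.5}, but to work throughout in the space $\MR(V,H)$ and to invoke Theorem~\ref{thm:5.1} in place of Lions' theorem. A preliminary observation used repeatedly is that $S(\tau)$ genuinely maps $V$ into $V$: if $v\in\MR(V,H)$ is the homogeneous solution with $v(0)=x\in V$, then $\cA(\cdot)v=-\dot{v}\in L^2(0,\tau;H)$, so Theorem~\ref{thm:5.1}\,(a) gives $v\in C([0,\tau];V)$ and hence $S(\tau)x=v(\tau)\in V$. This same regularity is what allows $u(0)$ and $u(\tau)$ to be read as elements of $V$, so that $\Phi$ may legitimately be applied to $u(\tau)$.

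For the direction (i)$\Rightarrow$(ii) I would argue exactly as in Theorem~\ref{thm:4.5}. Given $f\in L^2(0,\tau;H)$, I use Theorem~\ref{thm:5.1}\,(b) to solve $\dot{w}+\cA(\cdot)w=f$, $w(0)=0$; then $w(\tau)\in V$ by part~(a), so $\Phi w(\tau)\in V$, and bijectivity of $I-\Phi S(\tau)$ on $V$ yields a unique $x\in V$ with $x-\Phi S(\tau)x=\Phi w(\tau)$. Solving the homogeneous problem with initial value $x$ and adding the two solutions produces a solution $u$ of the boundary value problem. Uniqueness follows by reducing any homogeneous solution $u$ with $u(0)=\Phi u(\tau)$ to the equation $(I-\Phi S(\tau))u(0)=0$ and combining injectivity with uniqueness in Theorem~\ref{thm:5.1}\,(b).

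The substantive direction is (ii)$\Rightarrow$(i), and the main obstacle is surjectivity of $I-\Phi S(\tau)$ on $V$. Here one must realize $S(\tau)y$ as the terminal value of a function $w\in\MR(V,H)$ with $w(0)=0$ whose associated forcing $\dot{w}+\cA(\cdot)w$ lies in $L^2(0,\tau;H)$, not merely in $L^2(0,\tau;V')$. The reverse-time cut-off construction from Theorem~\ref{thm:4.5} does not transfer, because it applies $\cA(\cdot)$ to an arbitrary $V$-valued path and so only lands in $V'$. The fix is to start instead from the genuine homogeneous solution $v\in\MR(V,H)$ with $v(0)=y$, for which $\cA(\cdot)v=-\dot{v}\in H$, and to set $w=\phi v$ for a scalar cut-off $\phi\in C^1[0,\tau]$ with $\phi(0)=0$, $\phi(\tau)=1$. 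Then $w\in\MR(V,H)$, $w(0)=0$, $w(\tau)=v(\tau)=S(\tau)y$, and the forcing telescopes to $\dot{w}+\cA(\cdot)w=\dot\phi\,v$, which lies in $L^2(0,\tau;H)$ since $v\in C([0,\tau];V)\subset L^2(0,\tau;H)$.

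With such a $w$ at hand, I would feed $f=\dot\phi\,v$ into hypothesis~(ii) to obtain $u\in\MR(V,H)$ with $u(0)=\Phi u(\tau)$, set $x=u(0)\in V$, and observe that $u-w$ solves the homogeneous problem with initial value $x$, giving $u(\tau)=S(\tau)x+S(\tau)y$. The boundary condition then reads $(I-\Phi S(\tau))(x+y)=y$, which proves surjectivity. Injectivity is immediate: if $x=\Phi S(\tau)x$, the homogeneous solution starting at $x$ satisfies the boundary condition, hence vanishes by uniqueness in~(ii), so $x=0$.
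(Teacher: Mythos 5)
Your proof is correct, and while your (i)$\Rightarrow$(ii) direction and the overall algebra (the decomposition $u=v+w$ and the identity $(I-\Phi S(\tau))(x+y)=y$) coincide with the paper's argument, you replace the paper's key technical tool in (ii)$\Rightarrow$(i) with a genuinely different construction. The paper isolates a trace lemma (Lemma~\ref{lem:5.3}): for \emph{every} $y\in V$ there exists $w\in\MR(V,H)$ with $w(0)=0$, $w(\tau)=y$ and $\dot{w}+\cA(\cdot)w\in L^2(0,\tau;H)$. Its proof runs the equation backwards in time, applying Theorem~\ref{thm:5.1}\,(b) to $\widetilde{\cA}(t)=\cA(\tau-t)$ (legitimate because the standing hypotheses on $\form{a}_1$ and $\form{a}_2$ are invariant under $t\mapsto\tau-t$), reversing the resulting solution, and then multiplying by a cut-off. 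You instead lift only terminal values of the special form $S(\tau)y$: taking $w=\phi v$ with $v$ the \emph{forward} homogeneous solution starting at $y$, the forcing telescopes to $\dot{w}+\cA(\cdot)w=\dot{\phi}\,v\in L^2(0,\tau;H)$ automatically, since $\dot v+\cA(\cdot)v=0$. This is more elementary --- no time reversal and no need to check that the reversed form satisfies the hypotheses of the section --- and it suffices here because surjectivity only requires realizing $\Phi S(\tau)y$ as a right-hand side, never an arbitrary element of $\Phi V$. What your shortcut does not yield is the full strength of Lemma~\ref{lem:5.3}, which the paper reuses for arbitrary $y\in V$ in the proof of Theorem~\ref{thm:5.4new} (where one must lift any $y\in V$, not just elements of the range of $S(\tau)$); so the paper's extra work buys a reusable trace statement, while your argument buys a shorter, self-contained proof of Theorem~\ref{thm:5.2} alone. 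Your preliminary observation that $S(\tau)$ maps $V$ into $V$, and that $u(0),u(\tau)\in V$ for solutions with $H$-valued data, via the $C([0,\tau];V)$-regularity of Theorem~\ref{thm:5.1}\,(a), is exactly the regularity the paper uses implicitly when defining $S(\tau)$ and stating the boundary condition.
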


We start with the following lemma on traces.
\begin{lem}\label{lem:5.3}
Let $y\in V$. Then there exists $w\in\MR(V,H)$ such that $\dot{w}+\cA(\cdot)w\in L^2(0,\tau;H)$, $w(0)=0$ and $w(\tau)=y$.
\end{lem}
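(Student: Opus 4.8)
The plan is to split the two trace conditions: I would first construct a function realising the right-endpoint value $y$ while keeping the $H$-maximal regularity, and then kill the left-endpoint value by a scalar cutoff. So the first reduction is to produce some $z\in\MR(V,H)$ with $z(\tau)=y$ and $\dot z+\cA(\cdot)z\in L^2(0,\tau;H)$. Granting such a $z$, I would fix $\phi\in C^\infty[0,\tau]$ with $\phi(0)=0$, $\phi(\tau)=1$ and set $w:=\phi z$. Then $w\in\MR(V,H)$, since multiplication by a smooth scalar preserves both $H^1(0,\tau;H)$ and $L^2(0,\tau;V)$, and $w(0)=0$, $w(\tau)=y$. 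By the Leibniz rule,
\[
    \dot w+\cA(\cdot)w=\dot\phi\,z+\phi\bigl(\dot z+\cA(\cdot)z\bigr),
\]
and both terms lie in $L^2(0,\tau;H)$: the first because $\dot\phi$ is bounded and $z\in L^2(0,\tau;V)\embed L^2(0,\tau;H)$, the second because $\phi$ is bounded and $\dot z+\cA(\cdot)z\in L^2(0,\tau;H)$.

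It remains to build $z$, and here the difficulty is visible: the constant choice $z\equiv y$ fails because $\cA(t)y$ need not lie in $H$ for $y\notin D(A(t))$, so I must invoke the $H$-maximal regularity of Theorem~\ref{thm:5.1} to force $\cA(\cdot)z$ into $L^2(0,\tau;H)$. That theorem, however, prescribes the value at $t=0$, while I need a prescribed value at $t=\tau$. My remedy is to reverse time. I would set $\form{b}(t,\cdot,\cdot):=\form{a}(\tau-t,\cdot,\cdot)$, with associated operators $\mathcal B(t)\in\Linop(V,V')$, and check that $\form{b}$ again satisfies all the standing hypotheses of this section; each of symmetry, coercivity (same $\alpha$), the Lipschitz bound (same $L$, since $\abs{(\tau-t)-(\tau-s)}=\abs{t-s}$), the $\form{a}_2$-type bound (same $M$), and measurability in time is either uniform in $t$ or invariant under $t\mapsto\tau-t$. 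Then Theorem~\ref{thm:5.1}\,(b) applied to $\mathcal B$ yields a unique $\tilde z\in\MR(V,H)$ with $\dot{\tilde z}+\mathcal B(\cdot)\tilde z=0$ and $\tilde z(0)=y$, and Theorem~\ref{thm:5.1}\,(a) gives $\tilde z\in C([0,\tau];V)$.

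Finally I would set $z(t):=\tilde z(\tau-t)$. Time reflection keeps $z\in\MR(V,H)$, and $z(\tau)=\tilde z(0)=y$. Using $\mathcal B(s)\tilde z(s)=-\dot{\tilde z}(s)$ together with $\mathcal B(\tau-t)=\cA(t)$, the substitution $s=\tau-t$ gives $\cA(t)z(t)=-\dot{\tilde z}(\tau-t)=\dot z(t)$, whence
\[
    \dot z+\cA(\cdot)z=2\dot z\in L^2(0,\tau;H),
\]
as $z\in H^1(0,\tau;H)$. This delivers the required $z$ and closes the argument. I expect the only real obstacle to be the transfer of the datum from $t=0$ to $t=\tau$, which the time reversal resolves once one confirms that the hypotheses of Theorem~\ref{thm:5.1} are preserved under the reflection; the cutoff correction and the Leibniz computation are then routine.
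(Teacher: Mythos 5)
Your proof is correct and follows essentially the same route as the paper's: time-reverse the form, apply Theorem~\ref{thm:5.1}\,(b) to the reversed problem with initial value $y$, reflect the solution to obtain $z$ with $z(\tau)=y$ and $\dot z+\cA(\cdot)z=2\dot z\in L^2(0,\tau;H)$, then multiply by a cutoff vanishing at $0$. Your explicit verification that the standing hypotheses are preserved under $t\mapsto\tau-t$ is a point the paper leaves implicit, but otherwise the arguments coincide.
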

\begin{proof}
Consider $\widetilde{\cA}(t)=\cA(\tau-t)$. Applying Theorem~\ref{thm:5.1}\,(b) to $\widetilde{\cA}$ instead of $\cA$ we find $w_1\in\MR(V,H)$ such that $\dot{w}_1+\widetilde{\cA}(\cdot)w_1=0$, $w_1(0)=y$. Let $w_2(t)=w_1(\tau-t)$. Then $w_2(\tau)=y$ and 
\[
    \dot{w}_2(t)+\cA(t)w_2(t)=-\dot{w}_1(\tau-t)+\widetilde{\cA}(\tau-t)w_1(\tau-t).
\]
Thus $w_2\in\MR(V,H)$ and $\dot{w}_2+\cA(\cdot)w_2\in L^2(0,\tau;H)$.
Now choose $\phi\in C^\infty[0,\tau]$ such that $\phi(0)=0$ and $\phi(\tau)=1$. Then $w=\phi w_2\in\MR(V,H)$ and
\[
    \dot{w}(t)+\cA(t)w(t) = \phi(t)\bigl(\dot{w}_2(t)+\cA(t)w_2(t)\bigr) + \dot{\phi}(t)w_2(t).
\]
So $\dot{w}+\cA(\cdot) w\in L^2(0,\tau;H)$, $w(0)=0$ and $w(\tau)=y$.
\end{proof}

\begin{proof}[Proof of Theorem~\ref{thm:5.2}]
\begin{parenum}
\item[(i)$\Rightarrow$(ii).] Let $w\in\MR(V,H)$ be the solution of $\dot{w}+\cA(\cdot) w=f$, $w(0)=0$.
By the assumption there exists a unique $x\in V$ such that $x-\Phi S(\tau)x=\Phi w(\tau)$.
Let $v\in\MR(V,H)$ be such that $\dot{v}+\cA(\cdot) v=0$, $v(0)=x$. Then $S(\tau)x=v(\tau)$.
Moreover, $u=v+w\in\MR(V,H)$, $\dot{u}+\cA(\cdot) u=f$ and $u(0)=\Phi u(\tau)$. This shows existence.
Uniqueness follows as in Theorem~\ref{thm:4.5}.

\item[(ii)$\Rightarrow$(i).] We show that $I-\Phi S(\tau)\colon V\to V$ is surjective.
Let $y\in V$. Then $S(\tau)y\in V$. By Lemma~\ref{lem:5.3} there exists $w\in\MR(V,H)$ such that $\dot{w}+\cA(\cdot) w\in L^2(0,\tau; H)$, $w(0)=0$ and $w(\tau)=S(\tau)y$.
By assumption there exists $u\in\MR(V,H)$ such that $\dot{u}+\cA(\cdot) u = \dot{w}+\cA(\cdot) w$ and $u(0)=\Phi u(\tau)$. 
Let $v=u-w$. 
Then $\dot{v}+\cA(\cdot) v=0$ and $v(0)=u(0)\eqqcolon x\in V$, $S(\tau)x=v(\tau)=u(\tau)-w(\tau) = u(\tau) - S(\tau)y$.
Hence $x-\Phi S(\tau)x = \Phi S(\tau) y$.
This implies that $(I-\Phi S(\tau))(x+y)=y$.
We have proved surjectivity. The proof of injectivity is the same as in Theorem~\ref{thm:4.5}.\qedhere
\end{parenum}
\end{proof}

If $\Phi$ is not assumed to map into $V$, we instead obtain the next characterization.
\begin{thm}\label{thm:5.4new}
Let $\Phi\colon V\to H$ be linear. The following statements are equivalent.
\begin{romanenum}
\item For all $f\in L^2(0,\tau;H)$ there exists a unique $u\in\MR(V,H)$ such that
\[
    \left\{\begin{aligned}
        &\dot{u}+\cA(\cdot)u = f, \\
        &u(0)=\Phi u(\tau).
    \end{aligned}\right.
\]
\item $\Phi$ satisfies the following two properties.
\begin{alenum}
\item $\Phi S(\tau)x=x$ implies $x=0$, for every $x\in V$.
\item $\Phi V\subset (I-\Phi S(\tau))V$.
\end{alenum}
\end{romanenum}
\begin{proof}
\begin{parenum}
\item[(i)$\Rightarrow$(ii).] In order to show~(a), suppose that $x\in V$ with $\Phi S(\tau)x=x$.
By Theorem~\ref{thm:5.1}\,(b) there exists a $v\in\MR(V,H)$ such that $\dot{v}+\cA(\cdot)v=0$, $v(0)=x$. Then $v(\tau)=S(\tau)x$ and therefore $\Phi v(\tau)=v(0)$. By~(i) it follows that $v=0$. Hence $x=0$.

Next we show~(b). Let $y\in V$. By Lemma~\ref{lem:5.3} there exists a $w\in\MR(V,H)$ such that $f:=\dot{w}+\cA(\cdot) w\in L^2(0,\tau;H)$, $w(0)=0$ and $w(\tau)=y$.
Let $u\in\MR(V,H)$ be such that $\dot{u}+\cA(\cdot) u = f$ and $\Phi u(\tau)=u(0)=: x$.
Then $v=u-w\in\MR(V,H)$, $\dot{v}+\cA(\cdot) v=0$, $v(0)=x$ and $v(\tau)=u(\tau)-w(\tau) = u(\tau)-y$.
Hence $S(\tau)x=u(\tau)-y$. Thus $x=\Phi u(\tau) = \Phi S(\tau)x+\Phi y$, i.e.~$x-\Phi S(\tau)x=\Phi y$.

\item[(ii)$\Rightarrow$(i).]
Clearly~(a) implies uniqueness. In order to prove existence, let $f\in L^2(0,\tau;H)$. By Theorem~\ref{thm:5.1}\,(b) there exists a $w\in\MR(V,H)$ such that $\dot{w}+\cA(\cdot) w = f$, $w(0)=0$.
Then $y:=w(\tau)\in V$. By~(b) there exists an $x\in V$ such that $x-\Phi S(\tau)x=\Phi y$.
Let $v\in\MR(V,H)$ be such that $\dot{v}+\cA(\cdot)v=0$, $v(0)=x$. Then $v(\tau)=S(\tau)x$ and $\Phi v(\tau) = x-\Phi y$. Let $u=v+w$. Then $\dot{u}+\cA(\cdot)u=f$, $u(0)=x$, $u(\tau)=v(\tau)+y$ and therefore $\Phi u(\tau) = x-\Phi y + \Phi y = u(0)$.\qedhere
\end{parenum}
\end{proof}

\end{thm}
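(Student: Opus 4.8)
The plan is to reuse the decomposition strategy from the proofs of Theorem~\ref{thm:4.5} and Theorem~\ref{thm:5.2}: I would split an arbitrary solution into an inhomogeneous part with vanishing initial value and a homogeneous part governed by $S(\tau)$, thereby turning the boundary condition $u(0)=\Phi u(\tau)$ into a single linear equation for the initial value in $V$. The point that makes this legitimate is maximal regularity in $H$: if $u\in\MR(V,H)$ solves $\dot{u}+\cA(\cdot)u=f$ with $f\in L^2(0,\tau;H)$, then $\cA(\cdot)u=f-\dot{u}\in L^2(0,\tau;H)$, so Theorem~\ref{thm:5.1}\,(a) yields $u\in C([0,\tau];V)$ and in particular $u(0),u(\tau)\in V$. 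This is precisely what allows $\Phi u(\tau)$ to make sense, since $\Phi$ is only defined on $V$.

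First I would fix $f\in L^2(0,\tau;H)$ and, using Theorem~\ref{thm:5.1}\,(b), take the unique $w\in\MR(V,H)$ with $\dot{w}+\cA(\cdot)w=f$ and $w(0)=0$; put $y:=w(\tau)\in V$. By Theorem~\ref{thm:5.1}\,(b) again, solutions $u$ of $\dot{u}+\cA(\cdot)u=f$ in $\MR(V,H)$ correspond bijectively to their initial values $x=u(0)\in V$, via $u=v+w$ where $v$ is the homogeneous solution with $v(0)=x$ and hence $v(\tau)=S(\tau)x$. Writing out $u(0)=\Phi u(\tau)$ gives $x=\Phi\bigl(S(\tau)x+y\bigr)$, i.e.
\[
    (I-\Phi S(\tau))x = \Phi y.
\]
Thus, for this fixed $f$, existence (resp.\ uniqueness) of the solution $u$ is equivalent to existence (resp.\ uniqueness) of a solution $x\in V$ of the displayed equation.

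For uniqueness I would take $f=0$, so $w=0$ and $y=0$, and read off that uniqueness of $u$ for all $f$ is equivalent to injectivity of $I-\Phi S(\tau)$ on $V$, which is exactly condition~(ii)(a). For existence, the key is that as $f$ runs through $L^2(0,\tau;H)$ the trace $y=w(\tau)$ runs through \emph{all} of $V$: given $y\in V$, Lemma~\ref{lem:5.3} supplies $w\in\MR(V,H)$ with $w(0)=0$, $w(\tau)=y$ and $\dot{w}+\cA(\cdot)w\in L^2(0,\tau;H)$, so $f:=\dot{w}+\cA(\cdot)w$ realizes the trace $y$. Hence solvability of problem~(i) for every $f$ is equivalent to solvability of $(I-\Phi S(\tau))x=\Phi y$ for every $y\in V$, i.e.\ to $\Phi V\subset(I-\Phi S(\tau))V$, which is condition~(ii)(b). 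Combining the two equivalences yields (i)$\Leftrightarrow$(ii).

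I expect the only real subtlety to be the bookkeeping of the codomain of $\Phi$. Since $\Phi$ maps $V$ into $H$ rather than into $V$, the operator $I-\Phi S(\tau)$ maps $V$ into $H$, so unlike in Theorem~\ref{thm:5.2} one cannot phrase the conclusion as bijectivity of an operator on $V$; instead the invertibility splits into the injectivity statement~(a) and the range statement~(b). Beyond this, the argument is the routine decomposition together with the surjectivity of the trace map provided by Lemma~\ref{lem:5.3}.
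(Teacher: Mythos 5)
Your proposal is correct and follows essentially the same route as the paper's proof: the same decomposition $u=v+w$ via Theorem~\ref{thm:5.1}\,(b), the same reduction of the boundary condition to the equation $(I-\Phi S(\tau))x=\Phi y$, and the same use of Lemma~\ref{lem:5.3} to realize every $y\in V$ as the trace $w(\tau)$ of a zero-initial-value solution. Your preliminary remark that Theorem~\ref{thm:5.1}\,(a) gives $u\in C([0,\tau];V)$, so that $\Phi u(\tau)$ is well defined, is a point the paper leaves implicit, and your repackaging of the two implications as a single correspondence between solutions and initial values is a harmless reorganization of the same argument.
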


In the case where $\PPhi$ is weakly well-posed, Theorem~\ref{thm:5.4new} can be reformulated.
Recall that for $x\in H$, $T(\tau)x:=v(\tau)$, where $v\in\MR(V,V')$ is the solution of $\dot{v}+\cA(\cdot) v = 0$, $v(0)=x$.

\begin{prop}\label{prop:5.5new}
Let $\Phi\colon H\to H$ be linear. Assume that $I-\Phi T(\tau)\colon H\to H$ is bijective. Consider the following conditions.
\begin{romanenum}
\item\label{en:i-5.5n} For all $f\in L^2(0,\tau;H)$, the solution of $\PPhi[f]$ is in $\MR(V,H)$.
\item\label{en:ii-5.5n} $(I-\Phi T(\tau))^{-1}V\subset V$.
\item\label{en:iii-5.5n} $\Phi V\subset V$ and $(I-\Phi T(\tau))^{-1}V\subset V$.
\end{romanenum}
Then~\ref{en:i-5.5n} implies~\ref{en:ii-5.5n}, and~\ref{en:iii-5.5n} implies~\ref{en:i-5.5n}.
\end{prop}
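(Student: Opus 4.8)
The plan is to reduce both implications to a single observation: among the $\MR(V,V')$-solutions of $\PPhi[f]$ guaranteed by well-posedness, membership in $\MR(V,H)$ is controlled entirely by the initial value, and this initial value is an explicit function of the data through the operator $R := (I-\Phi T(\tau))^{-1}\in\Linop(H)$. First I would record well-posedness: since $I-\Phi T(\tau)$ is bijective on $H$, Theorem~\ref{thm:4.5} yields, for every $f\in L^2(0,\tau;H)\subset L^2(0,\tau;V')$, a unique $u\in\MR(V,V')$ solving $\PPhi[f]$. Writing $u=v+w$, where $w\in\MR(V,V')$ is the zero-initial-value solution of the inhomogeneous equation and $v$ solves the homogeneous equation with $v(0)=u(0)$, the boundary condition $u(0)=\Phi u(\tau)=\Phi\bigl(T(\tau)u(0)+w(\tau)\bigr)$ gives $u(0)=R\Phi w(\tau)$. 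Since $f\in L^2(0,\tau;H)$ and $w(0)=0\in V$, Theorem~\ref{thm:5.1}(b) yields $w\in\MR(V,H)$, whence $\cA(\cdot)w=f-\dot w\in L^2(0,\tau;H)$ and $w\in C([0,\tau];V)$ by Theorem~\ref{thm:5.1}(a); in particular $w(\tau)\in V$.

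The crux is the claim that $u\in\MR(V,H)$ if and only if $u(0)\in V$. For the forward direction, $u\in\MR(V,H)$ gives $\cA(\cdot)u=f-\dot u\in L^2(0,\tau;H)$, so Theorem~\ref{thm:5.1}(a) forces $u\in C([0,\tau];V)$ and hence $u(0)\in V$. Conversely, if $u(0)\in V$, then Theorem~\ref{thm:5.1}(b) produces a solution $\tilde u\in\MR(V,H)\subset\MR(V,V')$ of the initial value problem $\Pini[f,u(0)]$; since $u$ solves the same problem in $\MR(V,V')$, uniqueness in Theorem~\ref{thm:lions} gives $u=\tilde u\in\MR(V,H)$. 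Combining this with $u(0)=R\Phi w(\tau)$, with the fact that $w(\tau)\in V$ always, and with Lemma~\ref{lem:5.3} (which ensures that $w(\tau)$ exhausts all of $V$ as $f$ ranges over $L^2(0,\tau;H)$), I obtain that condition~\ref{en:i-5.5n} is \emph{equivalent} to the single inclusion $R\Phi V\subset V$.

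With this reformulation the two required implications become short operator manipulations. For~\ref{en:iii-5.5n}$\Rightarrow$\ref{en:i-5.5n} I would simply compose the two invariances in~\ref{en:iii-5.5n}: from $\Phi V\subset V$ and $RV\subset V$ one gets $R\Phi V\subset RV\subset V$, which is exactly~\ref{en:i-5.5n}. The implication~\ref{en:i-5.5n}$\Rightarrow$\ref{en:ii-5.5n} is the delicate one, and I expect it to be the main obstacle, since it must upgrade the weaker inclusion $R\Phi V\subset V$ to the stronger $RV\subset V$. Here I would exploit the resolvent identity $R=I+R\Phi T(\tau)$ together with the invariance $T(\tau)V=S(\tau)V\subset V$: for $y\in V$ one has $T(\tau)y\in V$, hence $\Phi T(\tau)y\in\Phi V$ and $R\Phi T(\tau)y\in R\Phi V\subset V$, so that $Ry=y+R\Phi T(\tau)y\in V$. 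This passage from invariance of $R$ on the range $\Phi V$ to invariance on all of $V$ is precisely where the algebraic structure of the resolvent and the $T(\tau)$-invariance of $V$ are needed, and it completes the proof.
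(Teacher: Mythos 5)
Your proposal is correct, and it reorganizes the paper's argument in a way worth noting. The paper proves the two implications separately: for (i)$\Rightarrow$(ii) it takes $y\in V$, uses Lemma~\ref{lem:5.3} to produce $w$ with $w(0)=0$ and $w(\tau)=T(\tau)y$, and reads off $(I-\Phi T(\tau))(x+y)=y$ with $x=u(0)\in V$ --- this is exactly your two steps (the inclusion $(I-\Phi T(\tau))^{-1}\Phi V\subset V$ and the identity $(I-\Phi T(\tau))^{-1}=I+(I-\Phi T(\tau))^{-1}\Phi T(\tau)$ applied to $T(\tau)y\in V$) merged into a single computation; for (iii)$\Rightarrow$(i) the paper instead passes through Theorem~\ref{thm:5.2}, showing that $I-\Phi S(\tau)\colon V\to V$ is bijective. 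Your route replaces that second argument with the explicit formula $u(0)=(I-\Phi T(\tau))^{-1}\Phi w(\tau)$ together with the trace characterization ``$u\in\MR(V,H)$ if and only if $u(0)\in V$'', which you justify correctly via Theorem~\ref{thm:5.1}\,(a) (necessity, using $\cA(\cdot)u=f-\dot{u}\in L^2(0,\tau;H)$) and via Theorem~\ref{thm:5.1}\,(b) plus uniqueness in Theorem~\ref{thm:lions} (sufficiency); Lemma~\ref{lem:5.3} together with Theorem~\ref{thm:5.1} indeed shows that $w(\tau)$ ranges exactly over $V$. What your organization buys is a sharper intermediate statement --- under the standing bijectivity hypothesis, condition (i) is \emph{equivalent} to $(I-\Phi T(\tau))^{-1}\Phi V\subset V$ --- from which both implications become one-line operator algebra, and you avoid invoking Theorem~\ref{thm:5.2} altogether. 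One cosmetic slip: $\Phi$ is only assumed linear, so the assertion $(I-\Phi T(\tau))^{-1}\in\Linop(H)$ is unjustified (no closed-graph argument is available for unbounded $\Phi$); this is harmless, however, since your proof uses only algebraic inclusions and never the boundedness of the inverse.
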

\begin{proof}
\begin{parenum}
\item[(i)$\Rightarrow$(ii).] Let $y\in V$. Then $T(\tau)y\in V$.
By Lemma~\ref{lem:5.3} there exists a $w\in\MR(V,H)$ such that $f:=\dot{w}+\cA(\cdot)w\in L^2(0,\tau;H)$, $w(0)=0$ and $w(\tau)=T(\tau)y$. Let $u\in\MR(V,H)$ be such that $\dot{u}+\cA(\cdot)u=f$, $u(0)=\Phi u(\tau)$. Let $v=u-w$. Then $\dot{v}=\cA(\cdot)v = 0$ and $v(0)=u(0)=:x\in V$.
Hence $T(\tau)x=v(\tau)=u(\tau)-T(\tau)y$. Thus $\Phi T(\tau)x = x- \Phi T(\tau)y$.
Hence $(I-\Phi T(\tau))x = \Phi T(\tau)y$ and therefore $(I-\Phi T(\tau))(x+y)=y$.
So $(I-\Phi T(\tau))^{-1}y=x+y\in V$.

\item[(iii)$\Rightarrow$(i).] By Theorem~\ref{thm:5.1} one has $T(\tau)V\subset V$. Hence~(iii) implies that $\Phi T(\tau)V\subset V$. Since $(I-\Phi T(\tau))H=H$, the condition $(I-\Phi T(\tau))^{-1}V\subset V$ implies that $(I-\Phi S(\tau))V=V$. This implies~(i) by Theorem~\ref{thm:5.2}.\qedhere
\end{parenum}
\end{proof}

As an illustration, we show how the results can be applied to a semilinear problem.
In fact, maximal regularity in $H$ implies compactness properties which allow us to apply Schauder's or Schaefer's fixed point theorem.

Let $\cV = L^2(0,\tau;V)$ and $\cH=L^2(0,\tau;H)$. 
We continue to assume the conditions on the form $\form{a}$ from the beginning of this section.
\begin{thm}\label{thm:5.5}
Let $\KK=\RR$. Assume that the embedding $V\embed H$ is compact.
Assume that there exists an $\alpha_1>0$ such that $\form{a}(t,v,v)\ge\alpha_1\norm{v}_V^2$ for all $v\in V$ and $t\in[0,\tau]$.
Let $\Phi\in\Linop(H)$ with $\norm{\Phi}_{\Linop(H)}\le 1$.
Assume that $\Phi V\subset V$ and $(I-\Phi T(\tau))^{-1}V\subset V$.
Let $F\colon\cH\to\cH$ be continuous such that
\[
    \norm{F(v)}_{\cH} \le \alpha_2\norm{v}_{\cH} + \beta_1\quad\text{for all $v\in\cH$,}
\]
where $\beta_1>0$ and $\alpha_2\in(0,c_H^2\alpha_1)$.

Then there exists a $u\in\MR(V,H)$ such that $u(t)\in D(A(t))$ a.e., $A(\cdot)u\in\cH$ and
\[
    \left\{\begin{aligned}
        &\dot{u}+A(\cdot)u = F(u), \\
        &u(0) = \Phi u(\tau).
    \end{aligned}\right.
\]
\end{thm}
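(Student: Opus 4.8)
The plan is to recast the semilinear problem as a fixed-point equation in $\cH = L^2(0,\tau;H)$ for a compact map and to apply Schauder's fixed point theorem. First I would set up the linear solution operator. Since the form is coercive with $\omega=0$ and constant $\alpha_1$, Lemma~\ref{lem:4.4} gives $\norm{T(\tau)}_{\Linop(H)}\le(1+2\alpha_1 c_H\tau)^{-\frac12}<1$; as $\norm{\Phi}_{\Linop(H)}\le1$ this yields $\norm{\Phi T(\tau)}_{\Linop(H)}<1$, so $I-\Phi T(\tau)\colon H\to H$ is invertible by a Neumann series. Together with the standing hypotheses $\Phi V\subset V$ and $(I-\Phi T(\tau))^{-1}V\subset V$, condition~\ref{en:iii-5.5n} of Proposition~\ref{prop:5.5new} is satisfied, whence condition~\ref{en:i-5.5n} holds: for every $g\in\cH$ the solution of $\PPhi[g]$ lies in $\MR(V,H)$. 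I denote this unique solution by $\mathcal{S}g$. The map $\mathcal{S}$ is linear, and the closed graph theorem shows $\mathcal{S}\colon\cH\to\MR(V,H)$ is bounded.

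Next I would prove that $\mathcal{S}\colon\cH\to\cH$ is compact. By Lemma~\ref{lem:3a.8}, applied with $p=2$, $X=H$ and $D=V$ and using that $V\embed H$ is compact, the embedding $\MR(V,H)\embed C([0,\tau];H)$ is compact; composing with $C([0,\tau];H)\embed\cH$ shows $\MR(V,H)\embed\cH$ is compact. Since $\mathcal{S}$ is bounded into $\MR(V,H)$, it follows that $\mathcal{S}\colon\cH\to\cH$ is compact. Setting $\mathcal{K}:=\mathcal{S}\circ F\colon\cH\to\cH$, the map $\mathcal{K}$ is continuous (as $F$ is continuous and $\mathcal{S}$ is bounded) and compact (the growth bound shows $F$ maps bounded sets to bounded sets, and $\mathcal{S}$ is compact). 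By construction a fixed point of $\mathcal{K}$ is precisely a solution $u\in\MR(V,H)$ of $\dot u+\cA(\cdot)u=F(u)$ with $u(0)=\Phi u(\tau)$.

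To obtain an invariant ball I would establish the a priori estimate $\norm{\mathcal{S}g}_{\cH}\le(c_H^2\alpha_1)^{-1}\norm{g}_{\cH}$. Writing $u=\mathcal{S}g$, pairing the equation with $u$, integrating over $[0,\tau]$ and applying the integration-by-parts formula of Lemma~\ref{lem:ibp} gives
\[
    \tfrac12\norm{u(\tau)}_H^2-\tfrac12\norm{u(0)}_H^2+\int_0^\tau\form{a}(t,u(t),u(t))\dx[t]=\int_0^\tau\scalar{g(t)}{u(t)}_H\dx[t].
\]
The boundary term is nonnegative because $u(0)=\Phi u(\tau)$ and $\norm{\Phi}_{\Linop(H)}\le1$; coercivity bounds the form integral below by $\alpha_1\norm{u}_{\cV}^2$; and $c_H\norm{v}_H\le\norm{v}_V$ gives $\norm{u}_{\cV}^2\ge c_H^2\norm{u}_{\cH}^2$. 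Combining these with the Cauchy--Schwarz estimate of the right-hand side yields $c_H^2\alpha_1\norm{u}_{\cH}^2\le\norm{g}_{\cH}\norm{u}_{\cH}$, hence the claimed bound.

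Finally, the growth condition gives $\norm{\mathcal{K}u}_{\cH}\le(c_H^2\alpha_1)^{-1}(\alpha_2\norm{u}_{\cH}+\beta_1)$, and since $\alpha_2<c_H^2\alpha_1$ the closed ball $\overline{B}_R\subset\cH$ of radius $R:=\beta_1/(c_H^2\alpha_1-\alpha_2)$ satisfies $\mathcal{K}(\overline{B}_R)\subset\overline{B}_R$. As $\overline{B}_R$ is closed, convex and bounded and $\mathcal{K}$ is continuous and compact, Schauder's fixed point theorem provides $u\in\overline{B}_R$ with $\mathcal{K}u=u$. Then $u\in\MR(V,H)$ solves $\dot u+\cA(\cdot)u=F(u)$, $u(0)=\Phi u(\tau)$; as $F(u),\dot u\in\cH$ one has $\cA(t)u(t)=F(u)(t)-\dot u(t)\in H$ a.e., so $u(t)\in D(A(t))$ a.e., $A(\cdot)u\in\cH$ and $\dot u+A(\cdot)u=F(u)$, as required. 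I expect the main obstacle to be the compactness of $\mathcal{S}$: it is exactly here that maximal regularity in $H$ (Proposition~\ref{prop:5.5new}) is indispensable, since only because $\mathcal{S}$ maps into $\MR(V,H)$---rather than merely into $\MR(V,V')$---can the Aubin--Lions compact embedding be invoked; the a priori bound and the fixed-point step are then routine.
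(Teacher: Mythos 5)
Your proof is correct and follows the same overall architecture as the paper's: linear well-posedness with maximal regularity in $H$ via Proposition~\ref{prop:5.5new}, compactness of the solution map through the Aubin--Lions-type embedding $\MR(V,H)\embed\cH$, an energy estimate exploiting $u(0)=\Phi u(\tau)$ together with $\norm{\Phi}_{\Linop(H)}\le 1$ and coercivity, and a fixed-point theorem. The differences are in the final step and in two points of bookkeeping. The paper uses Schaefer's fixed point theorem: it only estimates elements $v$ of the Schaefer set $\{v : v=\lambda Sv,\ \lambda\in(0,1)\}$, deriving $(\alpha_1 c_H^2-\alpha_2)\norm{v}_\cH\le\beta_1$ directly from the equation $\dot v+A(\cdot)v=\lambda F(v)$. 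You instead prove the stronger, data-level bound $\norm{\mathcal{S}g}_\cH\le(c_H^2\alpha_1)^{-1}\norm{g}_\cH$ for \emph{all} $g\in\cH$ and then apply Schauder on the explicit invariant ball of radius $\beta_1/(c_H^2\alpha_1-\alpha_2)$; this is marginally more information from the same energy identity, at the cost of nothing, and makes the quantitative dependence transparent. You also make explicit two things the paper leaves implicit: first, that $I-\Phi T(\tau)\colon H\to H$ is actually invertible (via Lemma~\ref{lem:4.4} with $\omega=0$, giving $\norm{T(\tau)}_{\Linop(H)}\le(1+2\alpha_1 c_H\tau)^{-1/2}<1$ and hence $\norm{\Phi T(\tau)}_{\Linop(H)}<1$), which is presupposed by the hypothesis $(I-\Phi T(\tau))^{-1}V\subset V$; second, you obtain continuity of $\mathcal{S}$ into $\MR(V,H)$ by a closed graph argument, where the paper works with the Hilbert space $\MRa(V,H)$, its closed subspace $\MR_\Phi$, and the open mapping theorem applied to $R\colon\MR_\Phi\to\cH$ --- these are interchangeable. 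Your route to the compact embedding through Lemma~\ref{lem:3a.8} (with $X=H$, $D=V$, $p=2$) even yields compactness into $C([0,\tau];H)$, slightly more than the paper's direct appeal to Aubin--Lions, though only compactness into $\cH$ is used.
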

\begin{proof}
Consider the map $S\colon\cH\to\cH$ given by $Sv=u$, where $u\in\MR(V,H)$ is the unique solution of 
\[
    \left\{\begin{aligned}
        &\dot{u} + A(\cdot)u = F(v), \\
        &u(0) = \Phi u(\tau).
    \end{aligned}\right.
\]
The space
\begin{multline*}
    \MRa(V,H) := \{u\in H^1(0,\tau;H)\cap L^2(0,\tau;V) : u(t)\in D(A(t))\text{ a.e.} \\
        \text{and $t\mapsto A(t)u(t)\in L^2(0,\tau;H)$}\} 
\end{multline*}
is a Hilbert space for the norm
\[
    \norm{u}_{\MRa}^2 := \norm{\dot{u}}_\cH^2 + \norm{u}_\cV^2 + \norm{A(\cdot)u}_{L^2(0,\tau;H)}^2
\]
such that $\MRa(V,H)\embed C([0,\tau];V)$; see~\cite[Corollary~3.3]{ADLO14}. Moreover,
\[
    \MR_\Phi := \{u\in\MRa(V,H) : u(0)=\Phi u(\tau)\}
\]
is a closed subspace of $\MRa(V,H)$. The map $R\colon\MR_\Phi\to\cH$ given by $Ru = \dot{u} + A(\cdot)u$ is linear, continuous and bijective (by Proposition~\ref{prop:5.5new}).
Hence $R^{-1}\colon\cH\to\MR_\Phi$ is continuous.

By the Aubin--Lions theorem the embedding $j\colon\MR(V,H)\embed\cH$ is compact.
It follows that $S=j\circ R^{-1}\circ F\colon\cH\to\cH$ is continuous and compact.

In order to apply Schaefer's fixed point theorem~\cite[\S9.2.2 Theorem~4]{Eva98},
we have to show that the Schaefer set
\[
    \scrS := \{v\in\cH : \text{there exists $\lambda\in(0,1)$ such that $v=\lambda Sv$}\}
\]
is bounded in $\cH$.

Let $v\in\scrS$. Then $\dot{v}+A(\cdot) v = \lambda F(v)$, $v(0)=\Phi v(\tau)$.
Hence
\[
    \int_0^\tau \scalar{\dot{v}(t)}{v(t)}_H\dx[t] + \int_0^\tau \form{a}(t,v(t),v(t))\dx[t] = \lambda\scalar{F(v)}{v}_\cH.
\]
Since
\[
    \int_0^\tau\scalar{\dot{v}(t)}{v(t)}_H\dx[t] = \tfrac{1}{2}\norm{v(\tau)}_H^2 - \tfrac{1}{2}\norm{v(0)}_H^2 = \tfrac{1}{2}\norm{v(\tau)}_H^2 - \tfrac{1}{2}\norm{\Phi v(\tau)}_H^2\ge 0,
\]
it follows that
\[
    \alpha_1 c_H^2\norm{v}_\cH^2 \le \alpha_1\norm{v}_\cV^2 \le\norm{F(v)}_\cH\norm{v}_\cH.
\]
Thus
\[
    \alpha_1 c_H^2\norm{v}_\cH \le \norm{F(v)}_\cH \le \alpha_2\norm{v}_\cH + \beta_1.
\]
Hence $(\alpha_1 c_H^2-\alpha_2)\norm{v}_\cH\le \beta_1$.
Now Schaefer's fixed point theorem implies that there exists $u\in\cH$ such that $u=Su$.
Then $u$ solves the problem as desired.
\end{proof}

\begin{example}
Let $\KK=\RR$ and $H=L^2(\Omega)$, where $\Omega\subset\RR^d$ is open and bounded.
Let $V=H^1_0(\Omega)$ or $V=H^1(\Omega)$. In the latter case we in addition assume that $\Omega$ has Lipschitz boundary.
Then $V$ is compactly embedded in $L^2(\Omega)$.

Let $a_{kl}=a_{lk}\in L^\infty((0,\tau)\times\Omega)$ for all $k,l=1,\dots,d$ and suppose there exists an $\alpha>0$ such that
\[
    \sum_{k,l=1}^d a_{kl}(t,x)\xi_l\xi_k\ge\alpha\abs{\xi}^2
\]
for all $t\in [0,\tau]$, $x\in\Omega$ and $\xi\in\RR^d$.
Furthermore, let $c_k\in L^\infty((0,\tau)\times\Omega)$ for $k=0,\dots, d$.
We define the non-autonomous form $\form{a}\colon [0,\tau]\times V\times V \to\RR$ by
\[
    \form{a}(t,v,w) := \int_\Omega\Bigl(\sum_{k,l=1}^d a_{kl}(t,x)(\partial_l v)\partial_k w + \sum_{k=1}^d c_k(t,x)(\partial_k v)w + c_0(t,x)v w\Bigr)\dx.
\]
We assume that there exists an $\alpha_1>0$ such that
\[
    \form{a}(t,v,v)\ge\alpha_1\norm{v}_V^2\quad\text{for all $v\in V$, $t\in [0,\tau]$.}
\]
This can be arranged by replacing $c_0$ by $c_0+\omega$ for some $\omega\ge0$ big enough.

Let $F\colon\MR(V,H)\to\cH$ be given by
\[
    (Fv)(t,x) = g(t,x,v(t,x)),
\]
where $g\colon\RR\times\RR^d\times\RR\to\RR$ is continuous and satisfies
\[
    \abs{g(t,x,y)}^2 \le\alpha_2^2\abs{y}^2 + (h(x))^2\quad\text{for all $t\in [0,\tau]$, $x\in\RR^d$, $y\in\RR$,}
\]
where $\alpha_2\in (0,c_H^2\alpha_1)$ and $0\le h\in L^\infty(\RR^d)$.
Denote by $A(t)$ the operator in $H$ associated with $\form{a}(t,\cdot,\cdot)$.
Let $\Phi\in\Linop(L^2(\Omega))$ with $\norm{\Phi}_{\Linop(L^2(\Omega))}\le 1$ be such that $\Phi V\subset V$
and $(I-\Phi T(\tau))^{-1} V\subset V$.
Then Theorem~\ref{thm:5.5} gives the following result.
There exists a $u\in\MR(V,H)$ such that $u(t)\in D(A(t))$ a.e., $\dot{u}(t)+A(t)u(t)=g(t,\cdot, u(t,\cdot))$ in $L^2(\Omega)$ for a.e. $t\in(0,\tau)$ and $u(0)=\Phi u(\tau)$.
\end{example}

\section{Failure of maximal regularity in \texorpdfstring{$H$}{H}}\label{sec:7}

In this section we adopt the setting of Lion's theorem. We give an example in which maximal regularity in $H$ fails for any linear $\Phi\colon H\to H$, even if the problem is well-posed and $\norm{\Phi}_{\Linop(H)}\le 1$. It is a modification of Dier's counterexample~\cite[Section~5]{ADF17}.

\begin{thm}\label{thm:6.1}
There exist separable Hilbert spaces $V,H$ such that $V\dembed H$ and there exists a non-autonomous coercive form $\form{a}\colon[0,\tau]\times V\times V\to\CC$
such that the following holds.
Let $\cA(t)\in\Linop(V,V')$ be given by
$\langle\cA(t)v,w\rangle_{V',V} := \form{a}(t,v,w)$.
Let $\Phi\in\Linop(H)$.
Then there exists a $g\in L^2(0,\tau;H)$ such that the solution $u\in\MR(V,V')$ of
\[
    \left\{
    \begin{aligned}
    &\dot{u}+\cA(\cdot) u = g,\\
    &u(0)=\Phi u(\tau),
    \end{aligned}
    \right.
\]
is not in $\MR(V,H)$.
\end{thm}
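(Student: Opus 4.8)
The plan is to reduce everything to the construction of a \emph{single} function $w\in\MR(V,V')\setminus\MR(V,H)$ that solves $\dot w+\cA(\cdot)w=g$ for some $g\in L^2(0,\tau;H)$ and that satisfies $w(0)=w(\tau)=0$. The point of the doubly vanishing boundary values is that, for \emph{any} linear $\Phi\colon H\to H$, one automatically has $\Phi w(\tau)=\Phi(0)=0=w(0)$, so $w$ satisfies the generalized time--boundary condition $u(0)=\Phi u(\tau)$. Hence $w$ is a solution of $\PPhi[g]$; when the problem is well-posed (which holds for $\norm{\Phi}_{\Linop(H)}\le 1$ by Lemma~\ref{lem:4.4}, since the form is coercive and therefore $\norm{T(\tau)}_{\Linop(H)}<1$) it is \emph{the} solution, and in every case it witnesses $u\notin\MR(V,H)$. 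Thus the \emph{same} datum $g$ defeats maximal regularity in $H$ simultaneously for all $\Phi$.

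To produce $w$ I would start from Dier's counterexample, which furnishes Hilbert spaces $V\dembed H$ and a non-autonomous form for which maximal regularity in $H$ fails already for the pure initial value problem: there is a $g_0\in L^2(0,\tau;H)$ whose Lions solution $\tilde w$ (with $\tilde w(0)=0$) lies in $\MR(V,V')$ but not in $\MR(V,H)$. Since $\tilde w\in L^2(0,\tau;V)$ automatically, the failure is exactly that $\dot{\tilde w}\notin L^2(0,\tau;H)$. I may assume the form to be coercive: replacing $\form{a}$ by $\form{a}+\omega\scalar{\cdot}{\cdot}_H$ only shifts the equation by the term $\omega\tilde w\in L^2(0,\tau;H)$ and so changes neither the solution $\tilde w$ nor its membership in $\MR(V,H)$.

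The modification is then a cut-off. I would first arrange that the failure of $\dot{\tilde w}$ to lie in $L^2(0,\tau;H)$ occurs on a compact subinterval of the open interval $(0,\tau)$. Granting this, pick $\chi\in C^\infty[0,\tau]$ with $\chi(0)=\chi(\tau)=0$ and $\chi\equiv 1$ on that subinterval, and set $w:=\chi\tilde w$. Then $w(0)=w(\tau)=0$ irrespective of the boundary values of $\tilde w$, and a direct computation using $\cA(\cdot)w=\chi\,\cA(\cdot)\tilde w$ and $\dot{\tilde w}+\cA(\cdot)\tilde w=g_0$ gives
\[
    \dot w+\cA(\cdot)w=\chi'\tilde w+\chi\,g_0=:g.
\]
Here $g\in L^2(0,\tau;H)$, since $\chi\,g_0\in L^2(0,\tau;H)$ and $\chi'\tilde w\in L^2(0,\tau;H)$ (as $\tilde w\in C([0,\tau];H)$ is bounded). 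Moreover $w\in L^2(0,\tau;V)$, while $\dot w=\chi'\tilde w+\chi\dot{\tilde w}$ with $\chi'\tilde w\in L^2(0,\tau;H)$ but $\chi\dot{\tilde w}\notin L^2(0,\tau;H)$ by the localization; hence $w\notin H^1(0,\tau;H)$, that is $w\notin\MR(V,H)$. This $w$ and $g$ are the desired data.

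The one delicate point, and the step I expect to require genuine work, is the localization claim: producing within Dier's construction a singular solution whose loss of $H^1(0,\tau;H)$-regularity sits on an interior time-set, so that the endpoint-vanishing cut-off preserves the singularity. If, as is typical for such non-autonomous constructions, the coefficient oscillation responsible for the failure already acts on a positive-measure interior set, this is immediate, since the restriction to any interior subinterval still exhibits the failure. Otherwise I would embed the construction into a strictly larger time interval, using a smooth coercive autonomous form near the endpoints, so as to push the singularity into the interior before applying the cut-off.
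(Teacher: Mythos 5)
Your proposal is correct and is essentially the paper's own argument: the proof of Theorem~\ref{thm:6.1} constructs exactly such a doubly vanishing solution --- zero at both endpoints, so that $u(0)=\Phi u(\tau)$ holds trivially for every linear $\Phi$ --- by running the McIntosh-based switching form underlying Dier's counterexample and cutting off near the final time. The one point you flagged as delicate, the interior localization of the singularity, is automatic in that construction: on the second half the form is symmetric and constant, so $H^1$-regularity in $H$ just to the right of the switch time $t=1$ would force $u(1)\in V$ by Theorem~\ref{thm:5.1}\,(a), contradicting $u(1)=x_0\in D(B^{1/2})\setminus V$; the paper obtains $u(0)=0$ for free from the trace description of $D(B^{1/2})$ (its Lemma~\ref{lem:6.3}) and therefore needs a cut-off only at the right endpoint, whereas your two-sided cut-off $\chi$ works just as well.
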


We will use the following auxiliary results.
\begin{lem}\label{lem:6.2}
Let $-\infty<a<b<\infty$. For each $x\in H$ there exists $w\in H^1(a,b;V')\cap L^2(a,b;V)$ such that $w(a)=x$.
\end{lem}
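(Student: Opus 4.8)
The assertion is precisely that the trace map $w\mapsto w(a)$ from $\MR(V,V')=H^1(a,b;V')\cap L^2(a,b;V)$ to $H$ is \emph{surjective}. The plan is to realise each $x\in H$ as the value at time $a$ of the solution of a homogeneous Cauchy problem and then to invoke Lions' theorem (Theorem~\ref{thm:lions}), which delivers solutions in exactly this regularity class.

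Since the statement only involves the Gelfand triple $V\dembed H\dembed V'$, I would first fix a coercive form to which Lions' theorem applies. The natural universal choice is the autonomous form $\form{b}\colon V\times V\to\KK$ given by $\form{b}(v,w):=\scalar{v}{w}_V$. It is sesquilinear, trivially measurable in time (being constant), bounded with constant $M=1$ by Cauchy--Schwarz, and coercive with $\alpha=1$ and $\omega=0$, since $\form{b}(v,v)=\norm{v}_V^2$. Thus $\form{b}$ satisfies the hypotheses~\ref{en:form-sesqui}--\ref{en:form-quasi-coercive} needed for Lions' theorem. Let $\mathcal{B}\in\Linop(V,V')$ be the associated operator.

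Next, applying Lions' theorem on the interval $[a,b]$ (equivalently on $[0,b-a]$ after the harmless translation $s\mapsto a+s$, the argument being insensitive to the interval length) with right-hand side $f=0$ and initial value $x$, I obtain a unique $w\in H^1(a,b;V')\cap L^2(a,b;V)$ solving $\dot{w}+\mathcal{B}w=0$ and $w(a)=x$. This $w$ is exactly the function required by the lemma.

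The only point worth stressing is where the difficulty actually lies: if $x$ happened to lie in $V$, the constant function $w\equiv x$ would already do, but for a general $x\in H\setminus V$ no such naive choice belongs to $L^2(a,b;V)$. The regularising effect of the evolution equation — equivalently, of the holomorphic semigroup $e^{-(t-a)\mathcal{B}}$ generated by $-\mathcal{B}$, whose part in $H$ is a positive self-adjoint operator $B$ with $D(B^{1/2})=V$ — is precisely what upgrades the mere $H$-datum to $L^2(a,b;V)$-regularity, and this is what Lions' theorem encodes. Should one prefer to avoid citing Lions, the same $w$ can be written explicitly as $w(t)=e^{-(t-a)B}x$, and $w\in L^2(a,b;V)$ follows from the spectral estimate $\int_a^b\norm{B^{1/2}e^{-(t-a)B}x}_H^2\dx[t]\le\tfrac{1}{2}\norm{x}_H^2$, with the corresponding bound for $\dot{w}=-Be^{-(t-a)B}x$ in $L^2(a,b;V')$ obtained in the same way; either route yields the claim.
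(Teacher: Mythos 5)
Your proof is correct, but it takes a genuinely different route from the paper's. The paper argues via interpolation theory: it quotes Lunardi's trace-space characterization to identify $\{w(a) : w\in H^1(a,b;V')\cap L^2(a,b;V)\}$ with the real interpolation space $(V',V)_{\frac{1}{2},2}$, and then uses that in Hilbert spaces this coincides with the complex interpolation space $[V',V]_{\frac{1}{2}}$, which is $H$. You instead apply Lions' theorem (Theorem~\ref{thm:lions}) to the universal autonomous coercive form $\form{b}(v,w)=\scalar{v}{w}_V$ with $f=0$ and initial datum $x$; after the harmless translation to $[a,b]$ this produces the required $w$, and your verification of the hypotheses ($M=1$, $\alpha=1$, $\omega=0$) is accurate. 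There is no circularity: Theorem~\ref{thm:lions} is quoted from the literature independently of Section~\ref{sec:7}, and your fallback via the self-adjoint semigroup $e^{-(t-a)B}$ with the spectral bound $\int_a^b\norm{B^{1/2}e^{-(t-a)B}x}_H^2\dx[t]\le\frac{1}{2}\norm{x}_H^2$ (using $\norm{v}_V=\norm{B^{1/2}v}_H$ and $\norm{\mathcal{B}v}_{V'}=\norm{v}_V$, with the spectrum of $B$ bounded below by $c_H^2$) even makes the argument self-contained and quantitative. What each approach buys: yours is constructive and elementary modulo Lions (or the spectral theorem), while the paper's interpolation argument identifies the trace space exactly as $H$ — the full equality, not just surjectivity, although the reverse inclusion is anyway automatic from $\MR(V,V')\subset C([a,b];H)$ — and it runs in deliberate parallel with the proof of Lemma~\ref{lem:6.3}, which relies on the same interpolation machinery (BIP and the coincidence of real and complex interpolation), so the two lemmas share one toolkit in the paper's presentation.
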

\begin{proof}
By~\cite[Proposition~1.2.10]{Lun95} the real interpolation space can be described as a trace space in the following way:
\[
    (V',V)_{\frac{1}{2},2} := \{w(a) : w\in H^1(a,b;V')\cap L^2(a,b;V)\}
\]
Since $V',V$ are Hilbert spaces, $(V',V)_{\frac{1}{2},2} = [V',V]_{\frac{1}{2}}$ (the complex interpolation space) and the latter coincides with $H$.
\end{proof}

\begin{lem}\label{lem:6.3}
Let $V,H$ be Hilbert spaces, $V\dembed H$, $\form{b}\colon V\times V\to\CC$ a continuous, coercive sesquilinear form. Let $B$ be the operator in $H$ associated with $\form{b}$. Then
\[
    D(B^{1/2}) = [H,D(B)]_{\frac{1}{2}} = \{w(0) : w\in H^1(0,1;H)\cap L^2(0,1;D(B))\}.
\]
\end{lem}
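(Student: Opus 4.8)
The plan is to read the asserted chain as two separate equalities and to prove each with a standard tool, paralleling the argument used in Section~\ref{sec:3} to identify $\tr_2$ with $D((A+\omega)^{1/2})$. Two preliminary observations set the stage. First, since $B$ is closed, its domain $D(B)$ equipped with the graph norm $(\norm{x}_H^2+\norm{Bx}_H^2)^{1/2}$ is a Hilbert space, so both endpoints of the interpolation couple $(H,D(B))$ are Hilbert spaces. Second, since $\form{b}$ is coercive, $0\in\rho(B)$, so $B$ is invertible and its fractional powers are well behaved, with no injectivity issue to worry about.

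For the rightmost equality I would invoke~\cite[Proposition~1.2.10]{Lun95}, exactly as in the proof of Lemma~\ref{lem:6.2}: the trace space $\{w(0):w\in H^1(0,1;H)\cap L^2(0,1;D(B))\}$ equals the real interpolation space $(H,D(B))_{\frac{1}{2},2}$. Then, because $H$ and $D(B)$ are both Hilbert spaces, the real interpolation space $(H,D(B))_{\frac{1}{2},2}$ coincides (with equivalent norm) with the complex interpolation space $[H,D(B)]_{\frac{1}{2}}$; this is the same Hilbert-space fact already used in Section~\ref{sec:3}.

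It remains to identify $[H,D(B)]_{\frac{1}{2}}$ with $D(B^{1/2})$, and this is the crux. The key structural input is that $B$ has bounded imaginary powers: since $\form{b}$ is continuous and coercive, $B$ is $m$-sectorial of angle strictly less than $\frac{\pi}{2}$ with $0\in\rho(B)$, and on a Hilbert space such an operator admits a bounded $H^\infty$-calculus (McIntosh), so in particular $\{B^{is}\}_{s\in\RR}$ is an exponentially bounded group. For any sectorial operator with bounded imaginary powers one has $[X,D(B)]_\theta=D(B^\theta)$ for $0<\theta<1$, and specializing to $\theta=\frac{1}{2}$ gives $[H,D(B)]_{\frac{1}{2}}=D(B^{1/2})$ — precisely the BIP property invoked in Section~\ref{sec:3}. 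The main obstacle is thus isolated entirely in this third step, namely the boundedness of the imaginary powers of $B$; everything else is routine interpolation bookkeeping. I would dispatch it by citing the $H^\infty$-calculus/BIP result for operators associated with coercive forms on Hilbert spaces, the same fact used in the autonomous discussion following the \textit{Conclusion} in Section~\ref{sec:3}.
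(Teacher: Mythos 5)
Your proposal is correct and follows essentially the same route as the paper's proof: BIP of $B$ yields $D(B^{1/2})=[H,D(B)]_{\frac{1}{2}}$ (the paper cites \cite[Theorem~6.6.9]{Haa06}), the Hilbert-space coincidence of real and complex interpolation gives $[H,D(B)]_{\frac{1}{2}}=(H,D(B))_{\frac{1}{2},2}$ (the paper cites \cite[Theorem~1.18.10]{Tri78}), and \cite[Proposition~1.2.10]{Lun95} identifies the latter with the trace space. The only difference is that you spell out why $B$ has BIP (m-sectoriality of the operator associated with a continuous coercive form, plus the bounded $H^\infty$-calculus for such operators on Hilbert spaces), a step the paper asserts without further justification.
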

\begin{proof}
The operator $B$ has BIP, which implies the first equality by~\cite[Theorem~6.6.9]{Haa06}.
Since $[H,D(B)]_{\frac{1}{2}} = (H,D(B))_{\frac{1}{2},2}$ by~\cite[Theorem~1.18.10]{Tri78},
the second follows from~\cite[Proposition~1.2.10]{Lun95}.
\end{proof}

\begin{proof}[Proof of Theorem~\ref{thm:6.1}]
Let $V,H$ be separable Hilbert spaces such that $V\dembed H\dembed V'$, and $\form{b}\colon V\times V\to\CC$ be a continuous, coercive form such that $D(B^{1/2})\not\subset V$,
where $B$ is the operator in $H$ associated with $\form{b}$.
Such a form exists by a result of McIntosh~\cite{McI72}.

Define the form $\form{c}\colon V\times V\to\CC$ by $\form{c}(v,w) := \frac{1}{2}(\form{b}(v,w)+\conj{\form{b}(w,v)})$, let $\cC\in\Linop(V,V')$ be given by $\langle\cC v,w\rangle_{V',V}=\form{c}(w,v)$ and let $C$ be the operator in $H$ associated with $\form{c}$ (i.e., $C$ is the part of $\cC$ in $H$).
Then $D(C^{1/2})=V$ since $\form{c}$ is symmetric.
Define the non-autonomous form $\form{a}\colon [0,\tau]\times V\times V \to\CC$ by
\[
    \form{a}(t,v,w) := \begin{cases}
        \form{b}(v,w) & \text{if $t\in[0,1)$,} \\
        \form{c}(v,w) & \text{if $\in[1,2]$.}
    \end{cases}
\]
Then the operator $A(t)$ associated with $\form{a}(t,\cdot,\cdot)$ in $H$ is given by
\[
    A(t) = \begin{cases}
        B & \text{if $t\in[0,1)$,} \\
        C & \text{if $t\in[1,2]$.} 
    \end{cases}
\]
Choose $x_0\in D(B^{1/2})\setminus V$.
Then by Lemma~\ref{lem:6.3} there exists $v\in H^1(0,1;H)\cap L^2(0,1;D(B))$ such that $v(0)=0$, $v(1)=x_0$.

Moreover, by Lemma~\ref{lem:6.2} there exists $w_1\in H^1(1,2;V')\cap L^2(1,2;V)$ such that $w_1(1)=x_0$.
By Lions' theorem there exists $w_2\in H^1(1,2;V')\cap L^2(1,2;V)$ such that $\dot{w}_2+\cC w_2 = \dot{w}_1 + \cC w_1$, $w_2(1)=0$. Then $w=w_1-w_2\in H^1(1,2;V')\cap L^2(1,2;V)$, $\dot{w}+\cC w=0$, $w(1)=x_0$. Since $x_0\notin V$, one has $w\notin H^1(1,2;H)$, see Theorem~\ref{thm:5.1}\,(a). 
Notice that the form $\form{c}$ is symmetric and constant on $(1,2)$, so the hypotheses of Theorem~\ref{thm:5.1} are satisfied for this form.

Let $\psi\in C^\infty([1,2];V)$ be such that $\psi(1)=1$, $\psi(2)=0$. Then $\psi w\in H^1(1,2;V')\cap L^2(1,2;V)$ and $(\psi w)' + \cC(\psi w) = \dot{\psi}w + \psi\dot{w} + \psi\cC w = \dot{\psi} w \in L^2(1,2;V)\subset L^2(1,2;H)$.
Let
\[
    u(t) := \begin{cases}
        v(t) & \text{if $t\in[0,1)$,} \\
        \psi(t)w(t) & \text{if $t\in[1,2]$.} 
    \end{cases}
\]
Then $u\in H^1(0,2;V')\cap L^2(0,2;V)$ (since $v(1)=x_0=\psi(1)w(1)$) and $\dot{u}+\cA(\cdot)u=g\in L^2(0,2;H)$, where
\[
    g(t) = \begin{cases}
        \dot{v}+\cA(\cdot) v & \text{on $[0,1)$,} \\
        \dot{\psi}w & \text{on $[1,2]$.} 
    \end{cases}
\]
Moreover $u(0)=u(2)=0$. Thus $u\in H^1(0,2;V')\cap L^2(0,2;V)$ is the unique solution of
\[
    \left\{
    \begin{aligned}
    &\dot{u} + \cA(\cdot) u = g,\\
    &u(0) = \Phi u(2).
    \end{aligned}
    \right.
\]
But $u\notin H^1(0,2;H)$.
\end{proof}

\end{document}